\numberwithin{theorem}{section}
\newcommand{\TheTitle}{Optimal Actuator Design for  Semi-linear Systems} 
\newcommand{\TheAuthors}{M. S. Edalatzadeh, K. A. Morris}
\headers{\TheTitle}{\TheAuthors}
\title{{\TheTitle}}
\author{
  M. Sajjad Edalatzadeh\thanks{Department of Applied Mathematics, University of Waterloo, Waterloo, ON, Canada
    (\email{msedalat@uwaterloo.ca}).}
  \and
  Kirsten A. Morris\thanks{Department of Applied Mathematics, University of Waterloo, Waterloo, ON, Canada (\email{kmorris@uwaterloo.ca}).}}
\newtheorem{assumption}{Assumption}
\newtheorem{remark}{Remark}
\newcommand{\mc}[1]{\mathcal{#1}}
\newcommand{\norm}[2]{\left\| #1 \right\|_{#2} }
\newcommand{\normm}[1]{\left\| #1 \right\| }
\newcommand{\inn}[2]{ \left\langle #1, #2 \right\rangle}
\newcommand{\p}[3]{\frac{\partial^{#3}#1}{\partial#2^{#3}}}
\newcommand{\xb}{{\bm{x}}}
\newcommand{\yb}{{\bm{y}}}
\newcommand{\eb}{{\bm{e}}}
\newcommand{\pb}{{\bm{p}}}
\newcommand{\ub}{{\bm{u}}}
\newcommand{\rb}{{\bm{r}}}
\newcommand{\hb}{{\bm{h}}}
\newcommand{\fb}{{\bm{f}}}
\newcommand{\T}{{\mathcal{T}}}
\newcommand{\F}{{\mathcal{F}}}
\newcommand{\A}{{\mathcal{A}}}
\newcommand{\B}{{\mathcal{B}}}
\renewcommand{\ss}{{\mathbb{X}}}
\newcommand{{\cs}}{{\mathbb{U}}}
\newcommand{{\as}}{{\mathbb{K}}}
\begin{document}
\nolinenumbers
\maketitle

\begin{abstract}
Actuator location and design are important choices  in controller design for distributed parameter systems. Semi-linear partial differential equations model a wide spectrum of physical systems with distributed parameters. It is shown that  under certain conditions on the nonlinearity and the cost function, an optimal control input together with an optimal actuator choice exists. First-order necessary optimality conditions are derived. The results are applied to optimal actuator and controller design in a nonlinear railway track model {as well as semi-linear wave models.}
\end{abstract}

\begin{keywords}
Actuator design, Semi-linear partial differential equations, Optimal control, Flexible structures, Wave equation
\end{keywords}

\begin{AMS}
49J27, 49K27, 49J20, 49J50, 35L71
\end{AMS}

\section{Introduction} 
Actuator location and design are important design variables  in controller synthesis for distributed parameter systems. 
Finding the best actuator location to control a distributed parameter system can significantly reduce the cost of the control and improve  its effectiveness; see for example, \cite{FahDem,morris2015comparison,Mnoise}.
The optimal actuator location problem has been discussed by many researchers in various contexts; see \cite{frecker2003recent,van2001review} for a review of applications and
\cite{PTZ2013} for optimal location of actuators to maximize controllability in the wave equation.
In  \cite{morris2011linear}, it was proved that an optimal actuator location exists for linear-quadratic control.  Conditions under which using approximations in optimization yield the optimal location are also established.
 Similar results have been obtained for $H_2$ and $H_\infty$ controller design objectives with linear models \cite{kasinathan2013h,DM2013}. Results for optimal design  of linear PDE's have been obtained \cite{kalise2017optimal,MorrisVestCDC}. There are also results on the related problem of optimal sensor location for linear PDE's; see  \cite{Privatetal2013} for locations of maximum observability in the wave equation and
 \cite{WuJacob2015,ZhangMorris} for concurrent sensor choice/estimator design to minimize the error variance.
 
Nonlinearities can  have a significant effect on dynamics, and such systems cannot be accurately modelled by linear differential equations. 
Optimal control  of systems modelled by nonlinear partial differential equations (PDE's) has been studied  for a number of applications, including  wastewater treatment systems  \cite{martinez2000},  steel cooling plants \cite{unger2001},  oil extraction through a reservoir \cite{li2003}, solidification models in metallic alloys \cite{boldrini2009}, thermistors \cite{homberg2010optimal}, Schl\"ogl model \cite{buchholz2013,casas2013}, {FitzHugh–Nagumo system \cite{casas2013}, static elastoplasticity \cite{reyes2016}, type-II superconductivity \cite{yousept2017optimal}, Fokker-Planck equation \cite{fleig2017},  Schr\"odinger equation with bilinear control \cite{ciaramella2016}, Cahn-Hilliard-Navier-Stokes system \cite{hintermuller2017optimal}, wine fermentation process \cite{merger2017optimal}, time-dependent Kohn-Sham model \cite{sprengel2018investigation}, elastic crane-trolley-load system \cite{kimmerle2018optimal}.}
A review of PDE-constrained optimization theory can be found in the books \cite{hinze2008optimization,leugering2012constrained,troltzsch2010optimal}.
State-constrained optimal control of PDEs has also been studied. In \cite{bergounioux2003structure}, the authors investigated the structure of Lagrange multipliers for state constrained optimal control problem of linear elliptic PDEs.  {Research on optimal control of PDEs, such as \cite{casas1997pontryagin,raymond1999hamiltonian}, has focused on parabolic models of partial differential equations with certain structures.
Optimal control of differential equations in abstract spaces has rarely been discussed \cite{meyer2017optimal}. This paper extends  previous results to abstract differential equations without  an assumption of stability. }

Optimal actuator location  has  been addressed for some applications modeled by nonlinear  distributed parameter systems
using  a finite dimensional approximation of the original partial differential equation model.
In \cite{antoniades2001integrating}, authors investigated the optimal actuator and sensor location problem for  a transport-reaction process using  a finite-dimensional model. Similarly, in \cite{lou2003optimal}, the optimal actuator and sensor location of Kuramoto-Sivashinsky equation was studied using a finite-dimensional approximation. Other research concerned with optimal actuator location in problems with nonlinear distributed parameter dynamics can be found in \cite{armaou2008robust,moon2006finite,saviz2015optimal}. 
 To our knowledge, there are no theoretical results on optimal actuator location of nonlinear PDE's.

Theory for concurrent optimal control and actuator design of  a class of controlled semi-linear PDE's is described in this paper. 
The research described  extends previous work on optimal control of PDE's in that the linear part of the partial differential equation is not constrained to be the generator of an analytic semigroup. The input operator of the system is parametrized by the possible actuator designs.  A general class of  PDE's with weakly continuous nonlinear part is considered.
Optimality equations explicitly characterizing the optimal control and actuator are obtained.

Location of actuators on flexible structures has been one of the motivators for research into optimal actuator location \cite{frecker2003recent}. 
Various  models have been   studied. Classical results in the literature concern control of  linear and nonlinear Euler Bernoulli and Timoshenko beam models  \cite[e.g.]{kim1987boundary,lagnese1991uniform}. In recent years, non-classical models of flexible beams such as micro-beam models have also attracted attention \cite[e.g.]{edalatzadeh2016boundary}.
In nonlinear flexible structures, the nonlinearity typically  is  on deformations, not on the rate of deformations. The space in which deformations evolve is compactly embedded in that of rate of deformations. As a result, the nonlinear terms are  weakly continuous in the underlying state space. 
One application of the results in this paper is to the development of an optimal control strategy for the vibration suppression of railway tracks \cite{ansari2011,dahlberg2002dynamic,edalatzadeh2019stability}. 
 {The theory is also illustrated by application to concurrent optimal  control and actuator design for  semi-linear waves  in two space dimensions.}

The paper is organized as follows. After a short paragraph on notation, the problem definition as well as the main results are stated in section 2. Section 3 discusses the existence of a solution to the semi-linear partial differential equation. The existence  of an optimizer  is established in section 4. First-order necessary condition for the optimizer are provided in section 5. In section 6 and 7, the results of the previous sections are applied to the  railway track model and semi-linear wave models, respectively.

\section*{Notation}
Throughout this paper, the letters $c$, $t$, and $\xi$ denote a generic positive constant, temporal variable, and spatial variable, respectively. The blackboard letters as in $\ss$ denote Banach spaces, the calligraphic letters as in $\mc{A}$ denote operators on a Banach space. If an operator is nonlinear its argument is shown in parenthesis as in $\F(\cdot)$. The bold letters as in $\xb$ refer to states evolving in a Banach space; the rest of letters represent physical or generic constants. The adjoint of an operator is denoted by $\mc{A}^*$. The superscript  ${\cdot}^o$ shows that a state or an input is optimal, and the tilde overscript $\tilde{\cdot}$ is reserved for the state of a linearized system unless otherwise stated. Norms and inner products on the underlying state space are typed without any subscript, but on any other spaces, they are shown with a suitable subscript to avoid confusion. {The norm on $L^p(0,\tau;\cs)$ is denoted by $\norm{\cdot}{p}$.} Strong convergences on a Banach space are shown by $\to$, whereas a weak convergence is shown by $\rightharpoonup$. If the Banach space $\ss_1$ is continuously embedded in $\ss_2$, we write $\ss_1\hookrightarrow \ss_2$.
The Banach space $C([0,\tau];\ss)$  will often be indicated by $C(0,\tau;\ss)$ for simplicity of notation. 

\section{Main Results}\label{sec:main results}
Consider a semi-linear  system with state  $\xb(t)$ on a separable reflexive Banach space $\ss$:
\begin{equation}\label{IVP}
\dot{\xb}(t)=\mc{A}\xb(t)+\F(\xb(t))+\mc{B}(\rb)\ub(t), \quad \xb(0)=\xb_0\in \ss,
\end{equation}
The function $\ub(t)$ is the input to the system, and takes values in a reflexive Banach space $\cs$. The control operator $\mc B(\rb)$ depends on a parameter $\rb$ that takes values in a set $K_{ad}$ in a topological space $\mathbb{K}$. The parameter $\rb$  typically has interpretation as possible  actuator designs.
The operators $\mc{A}$, $\F(\cdot)$, and $\mc{B}(\cdot)$  satisfy the following assumptions.
\begin{assumption}\label{A}\leavevmode
\begin{enumerate}
\item \label{as:A1} The state operator $\mc{A}$ with domain $D(\mc{A})$ generates a strongly continuous semigroup $\mc{T}(t)$ on $\ss$.
\item \label{as:A2} {Let $\F(0)=0$}; the nonlinear operator $\F(\cdot)$ is locally Lipschitz continuous on $\ss$; that is, for every positive number $\delta$, there exists $L_{\F\delta} >0$ such that
\begin{equation}
	\normm{\F(\xb_2)-\F(\xb_1)}\le L_{\F\delta} \normm{\xb_2-\xb_1}, \notag
\end{equation}
for all $\normm{\xb_2}\le \delta$ and $\normm{\xb_1}\le \delta$.
\item \label{as:A3} For each $\rb\in K_{ad}$, the input operator $\mc{B}(\rb)$ is a linear bounded operator that maps the input space $\cs$ into the state space $\ss$. This family of operators is uniformly bounded over $K_{ad}$, i.e., there exist a positive number $M_\mc{B}$ such that $\norm{\mc{B}(\rb)}{\mc{L}(\cs,\ss)}\le M_\mc{B}$ for all $\rb\in K_{ad}$.
\end{enumerate}
\end{assumption}

In some cases, due to lack of regularity of the input $\ub$, a classical solution to (\ref{IVP}) is not assured.
\begin{definition}\label{defn-mild}
If $\xb\in C(0,\tau;\ss)$ satisfies
\begin{equation}
\xb(t)=\mc{T}(t)\xb_0+\int_0^t \mc{T}(t-s)\F(\xb(s))ds+\int_0^t \mc{T}(t-s)\mc{B}(\rb)\ub(s)ds, \label{eq:mild solution}
\end{equation} 
for every $\xb_0\in \ss$,
it is said to  be a {\em mild solution} to (\ref{IVP}).
\end{definition}

In \Cref{sec:existence of a solution}, the existence of a unique mild solution to the initial value problem (IVP) (\ref{IVP}) is proven {for $\ub(t)$ in the set
$$U_{ad}=\lbrace \ub\in L^p(0,\tau;\cs): \, \norm{\ub}{p}\le R \rbrace,$$
where $1<p<\infty$.}

\cref{thm:mild}: \textit{Under assumption \ref{A}, for each $\xb_0\in \ss$  and positive number $R$,  there exists $\tau>0$ such that (\ref{IVP}) admits a unique local mild solution $\xb\in C(0,\tau;\ss)$ for all {$\ub\in U_{ad}$}, and all $\rb\in K_{ad}$.}%

For functionals $\phi(\xb)$ on $\ss$ and $\psi(\ub)$ on $\cs$, consider the cost function \begin{equation}
J(\ub,\rb;\xb_0)=\int_0^{\tau} \phi(\xb(t))+\psi(\ub(t))\, dt. \notag \label{eq:cost}
\end{equation}
The optimization problem is to minimize $J(\ub,\rb;\xb_0)$ over all admissible control inputs $\ub\in U_{ad}$, and also over all admissible actuator designs $\rb\in K_{ad}$, subject to (\ref{IVP}) with a fixed initial condition $\xb_0\in\ss$. That is,
\begin{equation}
\left\{ \begin{array}{ll}
\min&J(\ub,\rb;\xb_0)\\
\text{s.t.}&\dot{\xb}(t)=\mc{A}\xb(t)+\F(\xb(t))+\mc{B}(\rb)\ub(t), \quad \forall t\in(0,\tau],\\
&\xb(0)=\xb_0,\\
& \ub\in U_{ad},\\
&\rb\in K_{ad}.
\end{array} \right. \tag{P} \label{eq:optimal problem}
\end{equation}
To guarantee the existence of a unique optimizer, further assumptions are needed on the operators $\F(\cdot)$, $\mc{B}(\cdot)$, the set $K_{ad}$, and the cost function $J(\ub,\rb;\xb_0)$. 
\begin{assumption}\leavevmode \label{B}
\begin{enumerate}
\item \label{as:B1} {Let $\xb_n(t)$ be a bounded sequence in $C(0,\tau;\ss)$ such that $\xb_n\rightharpoonup\xb$ in $L^p(0,\tau;\ss)$. Then, $\F(\xb_n)\rightharpoonup\F(\xb)$ in $L^p(0,\tau;\ss)$.}
\item \label{as:B2} Let $K_{ad}$ be a compact {convex} set in the actuator design space $\mathbb{K}$. The family of input operators $\mc{B}(\cdot):K_{ad}\to \mc{L}(\cs,\ss)$  are continuous with respect to $\rb$ in the operator norm topology:
\begin{equation}
\lim_{\rb_2\to \rb_1}\norm{\mc{B}(\rb_2)-\mc{B}(\rb_1)}{\mc{L}(\cs,\ss)} = 0. \notag
\end{equation}
\item The functionals $\phi(\cdot)$ and $\psi(\cdot)$ are weakly lower semi-continuous non-negative functionals on $\ss$ and $\cs$, respectively.\label{as:C3 B2}
\end{enumerate}
\end{assumption}
It is shown in \Cref{sec:existence of an optimizer} that under these assumptions, an optimal control and actuator design exist. 

\cref{thm:existence optimizer}: \textit{For  initial condition $\xb_0\in \ss$, let $\tau$ be such that the mild solution exists for all $\ub\in U_{ad}$, and all $\rb\in K_{ad}$. Under assumptions \ref{A} and \ref{B},  there exists a control input $\ub^o\in U_{ad}$ together with an actuator design $\rb^o\in K_{ad}$, that solves the optimization problem \ref{eq:optimal problem}.}%

To characterize an optimizer to the optimization problem, further assumptions on differentiability of the nonlinear operators $\mc F(\cdot)$ and $\B(\cdot)$, and the cost function are needed.
\begin{assumption}\leavevmode\label{C}
\begin{enumerate}
\item \label{as:C1} The nonlinear operator $\F(\cdot)$ is {G\^ateaux} differentiable on $\ss$ {(\cite[Def.~1.29]{hinze2008optimization})}. Indicate the {G\^ateaux} derivative of $\F(\cdot)$ at $\xb$ {in the direction $\pb$} by $\F'_{\xb}\pb$. {Furthermore, the mapping $\xb\mapsto \F'_{\xb}$ is bounded; that is, bounded sets in $\ss$ are mapped to bounded sets in $\mc L(\ss)$.}
\item \label{as:C2} The control operator $\mc{B}(\rb)$ is {G\^ateaux} differentiable with respect to $\rb$ {from $K_{ad}$ to $\mc{L}(\cs,\ss)$}. Indicate the {G\^ateaux} derivative of $\mc{B}(\rb)$ at $\rb^o$ {in the direction $\rb$} by $\mc{B}'_{\rb^o}\rb	$. {Furthermore, the mapping $\rb^o\mapsto \B'_{\rb^o}$ is bounded; that is, bounded sets in $\as$ are mapped to bounded sets in $\mc L(\as,\mc L(\cs,\ss))$.}
\item \label{as:C3} The spaces $\ss$, $\cs$, and $\as $ are Hilbert spaces, and {p=2}. Also, in the cost function, set
\begin{equation}\label{Q&R}
\phi(\xb)=\inn{\mc{Q}\xb}{\xb}, \quad \psi(\ub)=\inn{\mc{R}\ub}{\ub}_{\cs},
\end{equation}
where the linear operator $\mc{Q}$ is  a positive semi-definite, self-adjoint bounded operator on $\ss$, and the linear operator $\mc{R}$ is a positive definite, self-adjoint  bounded operator on $\cs$.
\end{enumerate}
\end{assumption}
 
Since $\ss$, $\cs $, and $\as$ are assumed to be Hilbert spaces in assumption \ref{C}\ref{as:C3}, the  dual of each of these spaces can be identified with the space itself. The operator $(\mc{B}'_{\rb^o}\ub)^*:\ss\to \mathbb{K}$ is defined as
\begin{equation}
\inn{(\mc{B}'_{\rb^o}\ub)^*\pb}{\rb}_\mathbb{K}=\inn{\pb}{(\mc{B}'_{\rb^o}\rb)\ub}, \quad \forall (\ub,\pb,\rb)\in  \cs\times\ss\times \mathbb{K}.\notag
\end{equation}
The following theorem is proved in \Cref{sec:characterizing}. In this theorem  $\xb=\mc{S}(\ub;\rb,\xb_0)$ denotes the control-to-state map  (see \cref{def:S}).

\cref{thm:characterizing}: \textit{Suppose assumptions \ref{A}\ref{as:A1} and \ref{C} hold, 
For any initial condition $\xb_0\in\ss$, let the pair $(\ub^o,\rb^o){\in U_{ad}\times K_{ad}}$ be a local minimizer of the optimization problem \ref{eq:optimal problem} with the optimal trajectory $\xb^o=\mc{S}(\ub^o;\rb^o,\xb_0)$ and let 
 $\pb^o(t)$, the adjoint state, indicate the mild solution of the final value problem
\begin{equation}
\dot{\pb}^o(t)=-(\mc{A}^*+\F'^*_{\xb^o(t)})\pb^o(t)-\mc{Q}\xb^o(t), \quad \pb^o(\tau)=0.\notag
\end{equation} 
If $(\ub^o,\rb^o)$ is in the interior of $ U_{ad}\times K_{ad}$ then $(\ub^o,\rb^o)$ satisfies
\begin{subequations}
\begin{flalign}
&\ub^o(t)=-\mc{R}^{-1}\mc{B}^*(\rb^o)\pb^o(t),\notag\\
&\int_0^{\tau} (\mc{B}'_{\rb^o}\ub^o(t))^*\pb^o(t)\, dt.\notag
\end{flalign}%
\end{subequations}}%

\section{Existence of a Solution to the IVP}\label{sec:existence of a solution}
In the existing literature, the existence of a unique local solution to (\ref{IVP}) is guaranteed for continuously differentiable control inputs (see e.g. \cite[Thm.~6.1.5]{pazy}). Requiring that  $\ub\in C^1(0,\tau;\cs)$ is too restrictive for establishing existence of an optimal control. The following theorem guarantees the existence of a unique local mild solution under  a weaker assumption on the input.
\begin{theorem} \label{thm:mild}
Under assumption \ref{A}, for each $\xb_0\in \ss$  and positive number $R$,  there exists $\tau>0$ such that (\ref{IVP}) admits a unique local mild solution $\xb\in C(0,\tau;\ss)$ for all {$\ub\in U_{ad}$}, and all $\rb\in K_{ad}$.
\end{theorem}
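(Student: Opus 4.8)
The plan is to recast the mild-solution identity (\ref{eq:mild solution}) as a fixed-point equation and apply the Banach contraction principle on a sufficiently short time interval. For fixed $\rb\in K_{ad}$ and $\ub\in U_{ad}$, define the map $\Phi$ on $C(0,\tau;\ss)$ by
\begin{equation}
(\Phi\xb)(t)=\mc{T}(t)\xb_0+\int_0^t \mc{T}(t-s)\F(\xb(s))\,ds+\int_0^t \mc{T}(t-s)\mc{B}(\rb)\ub(s)\,ds,\notag
\end{equation}
so that the mild solutions are precisely the fixed points of $\Phi$. First I would record the uniform bound $\normm{\mc{T}(t)}\le M_\T$ on $[0,\tau]$ supplied by assumption~\ref{A}\ref{as:A1}, and verify that $\Phi$ actually maps into $C(0,\tau;\ss)$: the term $\mc{T}(t)\xb_0$ is continuous by strong continuity of the semigroup, $\F(\xb(\cdot))$ is continuous since $\xb$ is continuous and $\F$ is (locally Lipschitz, hence) continuous by assumption~\ref{A}\ref{as:A2}, and $\mc{B}(\rb)\ub(\cdot)\in L^p(0,\tau;\ss)\subset L^1(0,\tau;\ss)$ by assumption~\ref{A}\ref{as:A3}; continuity of both convolution terms then follows from the standard fact that convolving a strongly continuous semigroup with an $L^1$ forcing term produces a continuous function.

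Next I would establish the self-mapping property on a ball. Fix any $\delta>M_\T\normm{\xb_0}$, let $L_{\F\delta}$ denote the corresponding local Lipschitz constant from assumption~\ref{A}\ref{as:A2}, and set $\mc S_\delta=\set{\xb\in C(0,\tau;\ss):\norm{\xb}{C(0,\tau;\ss)}\le\delta}$. For $\xb\in\mc S_\delta$ the first term contributes at most $M_\T\normm{\xb_0}$; using $\F(0)=0$ the nonlinear term is bounded by $M_\T L_{\F\delta}\delta\,\tau$; and for the input term I would invoke H\"older's inequality with conjugate exponent $q=p/(p-1)$ to get
\begin{equation}
\normm{\int_0^t \mc{T}(t-s)\mc{B}(\rb)\ub(s)\,ds}\le M_\T M_\mc{B}\int_0^t\norm{\ub(s)}{\cs}\,ds\le M_\T M_\mc{B}\,R\,\tau^{(p-1)/p}.\notag
\end{equation}
Because $1<p<\infty$ ensures $\tau^{(p-1)/p}\to0$ and the nonlinear term is $O(\tau)$, both quantities can be made no larger than $\delta-M_\T\normm{\xb_0}$ by shrinking $\tau$; hence $\Phi(\mc S_\delta)\subset\mc S_\delta$. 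The essential point of the bookkeeping is that $\delta$, and therefore $L_{\F\delta}$, is chosen \emph{before} $\tau$, so that the Lipschitz constant does not move when $\tau$ is later made small.

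For the contraction estimate, observe that in $\Phi\xb_2-\Phi\xb_1$ the initial-data and input terms cancel, leaving only the nonlinear convolution, so local Lipschitz continuity gives
\begin{equation}
\normm{(\Phi\xb_2)(t)-(\Phi\xb_1)(t)}\le M_\T L_{\F\delta}\,\tau\,\norm{\xb_2-\xb_1}{C(0,\tau;\ss)}\notag
\end{equation}
for $\xb_1,\xb_2\in\mc S_\delta$; thus $\Phi$ is a contraction as soon as $M_\T L_{\F\delta}\tau<1$. Choosing $\tau$ small enough to meet both this bound and the self-mapping requirement, the Banach fixed-point theorem yields a unique fixed point in $\mc S_\delta$, i.e.\ a mild solution on $[0,\tau]$. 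To upgrade uniqueness from ``within $\mc S_\delta$'' to ``in all of $C(0,\tau;\ss)$'' I would run a separate Gr\"onwall argument: any two mild solutions are continuous, hence bounded by some common $\delta'$, and subtracting their integral identities and applying local Lipschitz continuity with constant $L_{\F\delta'}$ yields $\normm{\xb(t)-\yb(t)}\le M_\T L_{\F\delta'}\int_0^t\normm{\xb(s)-\yb(s)}\,ds$, so Gr\"onwall's inequality forces them to coincide. I expect the only genuinely new ingredient, relative to the classical theory for $C^1$ inputs, to be the H\"older bound on the input term, and the main obstacle to be the interleaving of the two smallness conditions on $\tau$ with the $\delta$-dependence of $L_{\F\delta}$; the hypothesis $p>1$ enters precisely here, as it is what forces $\tau^{(p-1)/p}\to0$.
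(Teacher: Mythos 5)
Your proposal is correct and follows essentially the same route as the paper: a Banach fixed-point argument on a short time interval, with H\"older's inequality supplying the bound $M_\T M_\mc{B} R\,\tau^{(p-1)/p}$ on the $L^p$ input term and the smallness conditions on $\tau$ giving both invariance of a ball (the paper centers its ball at $\xb_0$ rather than at the origin, but this is only bookkeeping) and the contraction constant $M_\T L_{\F\delta}\tau<1$. Your closing Gr\"onwall step, upgrading uniqueness from the invariant ball to all of $C(0,\tau;\ss)$, is a point the paper's proof leaves implicit, so it is a welcome addition rather than a deviation.
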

\begin{proof}
The idea of the proof is similar to  \cite[Thm.~6.1.4]{pazy}, with a slight modification that here $\ub(t)$ is in $L^p(0,\tau;\cs)$.
For any $\xb_0 \in \ss$ choose constants $\delta_0>0$ and $\tau>0$ such that for all $t\in[0,\tau]$
$$\normm{\mc{T}(t)\xb_0-\xb_0}\le\delta_0.$$
 Let $\mathbb{S}$ be the closed bounded subset of  $C(0,\tau;\ss)$ defined as
\begin{equation}
\mathbb{S}=\left\{ \xb\in C(0,\tau;\ss) | \; \xb(0)=\xb_0, \, \normm{\xb(t)-\xb_0}\le 2\delta_0, \; \forall t\in [0,\tau] \right\}.
\end{equation}
Define the operator $\mc{G}$ on $\mathbb{S}$ to be
\begin{equation}
\mc{G}(\xb)(t)=\mc{T}(t)\xb_0+\int_0^t \mc{T}(t-s)\F(\xb(s))\, ds +\int_0^t \mc{T}(t-s)\mc{B}(\rb)\ub(s)\,ds . 
\end{equation}
It will be shown that for sufficiently small  $\tau$, $\mc{G}$ maps $\mathbb{S}$ into $\mathbb{S}$ and is a contraction on $\mathbb S .$

Use the  triangle inequality and write
\begin{flalign}
\normm{\mc{G}(\xb)(t)-\xb_0}\le& \normm{\mc{T}(t)\xb_0-\xb_0}+\normm{\int_0^t \mc{T}(t-s)\F(\xb(s))\, ds}  \label{eq:d1}
\\
&+\normm{\int_0^t \mc{T}(t-s)\mc{B}(\rb)\ub(s)\,ds}  .\notag 
\end{flalign}
There exist a number $M_{\T}>0$ that $\normm{\mc{T}(t)}\le M_{\T}$ for all $t\in [0,\tau]$. Also, recall from assumption \ref{A}\ref{as:A2} that there is $L_{\F\delta}>0$ so that 
$\|\F(\xb(s))\|\le L_{\F\delta} \|\xb(s)\|$ on a ball of radius $\delta=\normm{\xb_0}+2\delta_0$ centered at the origin. 
This gives a bound for the second term on the left hand side of the inequality (\ref{eq:d1})
\begin{equation}
\normm{\int_0^t \mc{T}(t-s)\F(\xb(s))\, ds} \leq M_{\T}L_{\F\delta}\delta \tau . \label{eq-bd2}
\end{equation}
Using assumption \ref{A}\ref{as:A3}, an upper bound for the third right hand side term is
\begin{equation}
\normm{\int_{0}^{t} \mc{T}(t-s)\mc{B}(\rb)\ub(s)\,ds}\le  M_{\T}M_\mc{B}\norm{\ub}{p}\tau^{(p-1)/p}.  \label{eq-hold}
\end{equation}
Applying  these bounds to inequality (\ref{eq:d1}), it follows for all $\ub\in U_{ad}$ that 
\begin{equation}
\normm{\mc{G}(\xb)(t)-\xb_0}\le \delta_0+M_{\T}L_{\F\delta}\delta \tau+M_{\T}M_\mc{B}R \tau^{(p-1)/p}.
\label{eq-bd}
\end{equation}
Choose $\tau$ small enough that the right hand side in (\ref{eq-bd}) is less than $2 \delta_0 .$
For such $\tau$, $\mc G : \mathbb S \to \mathbb S .$

Because of the local Lipschitz continuity of $\F(\cdot)$
\begin{flalign}
\normm{\mc{G}(\xb_2)-\mc{G}(\xb_1)}_{C(0,\tau;\ss)}&\le \sup_{t\in[0,\tau]}\normm{\int_0^t \mc{T}(t-s)\left(\F(\xb_2(s))-\F(\xb_1(s))\right)\, ds}\notag\\
&\le M_{\T}L_{\F\delta}\tau\normm{\xb_2-\xb_1}_{C(0,\tau;\ss)} .
\end{flalign}
Choosing $\tau$ so $M_{\T}L_{\F\delta}\tau< 1$ yields that $\mc G$ is a contraction on $\mathbb S.$ Thus, the operator $\mc{G}$ has a unique fixed point in $\mathbb{S}$ that satisfies
\begin{equation}
\xb(t)=\mc{T}(t)\xb_0+\int_0^t \mc{T}(t-s)\F(\xb(s))\, ds +\int_0^t \mc{T}(t-s)\mc{B}(\rb)\ub(s)\,ds \, .  \label{eq:d2}
\end{equation}
Therefore, $\xb (t)$ is the  unique local mild solution of (\ref{IVP}).
\end{proof}

\begin{corollary}\label{corollary1}
{Under assumption \ref{A}, for all $\ub\in U_{ad}$,} there exists a positive number $c_{\tau}$ such that the mild solution to (\ref{IVP}) satisfies
\begin{equation}
\norm{\xb}{{C(0,\tau;\ss)}}\le c_{\tau} \left(\normm{\xb_0}+\norm{\mc{B}(\rb)}{\mc{L}(\cs,\ss)}\norm{\ub}{p}\right) .   \label{eq:estimate on mild solution}
\end{equation}

\begin{proof}
{Let $\tau$ be as in \cref{thm:mild}}. Take the norm of both sides of (\ref{eq:mild solution}) and apply assumption \ref{A} together with the triangle inequality to obtain
\begin{flalign}
\normm{\xb(t)}&\le \normm{\mc{T}(t)\xb_0}+\normm{\int_0^t \mc{T}(t-s)\F(\xb(s))\, ds}+\normm{\int_0^t \mc{T}(t-s)\mc{B}(\rb)\ub(s)\,ds}\notag\\
&\le M_{\T}\normm{\xb_0}+M_{\T}L_{\F\delta}\int_0^t \normm{\xb(t)}dt\label{ineq}\\
&\quad +M_{\T}\tau^{(p-1)/p}\norm{\mc{B}(\rb)}{\mc{L}(\cs,\ss)}\norm{\ub}{p}.\notag
\end{flalign}
Defining the constant {$$c_{\tau}=\max\left\{1,M_{\T}e^{M_{\T}L_{\F\delta}\tau}\right\},$$}%
and applying Gronwall's lemma \cite[Thm.~1.4.1]{zettl2005} to {inequality (\ref{ineq})} yield
\begin{equation}
\normm{\xb(t)}\le c_{\tau}\left(\normm{\xb_0}+\norm{\mc{B}(\rb)}{\mc{L}(\cs,\ss)}\norm{\ub}{p}\right).
\end{equation}
{Taking supremum of both side over $[0,\tau]$ results in (\ref{eq:estimate on mild solution}).} 
\end{proof}
\end{corollary}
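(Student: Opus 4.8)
The plan is to start from the integral (mild) form (\ref{eq:mild solution}), take the state-space norm of both sides, and reduce the whole estimate to a single Gronwall-type integral inequality. First I would apply the triangle inequality to split $\normm{\xb(t)}$ into the three contributions coming from the initial data, the nonlinearity, and the control. The uniform semigroup bound $\normm{\T(t)}\le M_\T$ supplied by assumption \ref{A}\ref{as:A1} disposes of the first term, giving $\normm{\T(t)\xb_0}\le M_\T\normm{\xb_0}$.

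For the nonlinear term, the key observation is that assumption \ref{A}\ref{as:A2} together with $\F(0)=0$ upgrades local Lipschitz continuity to the sublinear bound $\normm{\F(\xb(s))}\le L_{\F\delta}\normm{\xb(s)}$ on the ball of radius $\delta$ in which the solution produced by \cref{thm:mild} is already known to remain; integrating in time then yields the term $M_\T L_{\F\delta}\int_0^t\normm{\xb(s)}\,ds$, which is exactly the shape Gronwall needs. For the control term I would invoke the uniform operator bound on $\mc{B}(\rb)$ from assumption \ref{A}\ref{as:A3} and, since $\ub$ lives in $L^p(0,\tau;\cs)$ rather than in a space of continuous functions, apply H\"older's inequality to the convolution integral; this is where the factor $\tau^{(p-1)/p}$ enters, producing the bound $M_\T\tau^{(p-1)/p}\norm{\mc{B}(\rb)}{\mc{L}(\cs,\ss)}\norm{\ub}{p}$.

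Collecting the three estimates gives an inequality of the form $\normm{\xb(t)}\le a+b\int_0^t\normm{\xb(s)}\,ds$, with the constant $a=M_\T\normm{\xb_0}+M_\T\tau^{(p-1)/p}\norm{\mc{B}(\rb)}{\mc{L}(\cs,\ss)}\norm{\ub}{p}$ and rate $b=M_\T L_{\F\delta}$. Applying Gronwall's lemma then delivers $\normm{\xb(t)}\le a\,e^{b\tau}$ for all $t\in[0,\tau]$, and absorbing $M_\T$, the growth factor $e^{M_\T L_{\F\delta}\tau}$, and the power $\tau^{(p-1)/p}$ into a single constant $c_\tau$ produces (\ref{eq:estimate on mild solution}) once I take the supremum over $[0,\tau]$.

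I expect the only genuinely delicate point to be the sublinear estimate on $\F$: the Lipschitz constant $L_{\F\delta}$ is only local, so the argument hinges on the a priori fact, guaranteed by \cref{thm:mild}, that the mild solution stays inside a fixed ball on the whole interval $[0,\tau]$. Without this uniform confinement the growth bound on the nonlinearity would not hold with a constant rate and Gronwall's lemma could not be applied as stated; everything else is routine once the integral inequality has been assembled.
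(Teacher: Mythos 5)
Your proposal is correct and follows essentially the same route as the paper: triangle inequality applied to the mild-solution formula, the sublinear bound $\normm{\F(\xb(s))}\le L_{\F\delta}\normm{\xb(s)}$ obtained from $\F(0)=0$ plus local Lipschitz continuity on the ball in which \cref{thm:mild} confines the solution, H\"older's inequality for the control convolution producing $\tau^{(p-1)/p}$, and then Gronwall's lemma followed by a supremum over $[0,\tau]$. If anything, your explicit absorption of the factor $\tau^{(p-1)/p}$ into $c_\tau$ is slightly more careful than the paper's stated constant $c_{\tau}=\max\left\{1,M_{\T}e^{M_{\T}L_{\F\delta}\tau}\right\}$, which tacitly assumes $\tau^{(p-1)/p}\le 1$.
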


\section{Existence of an Optimizer}\label{sec:existence of an optimizer}

The following theorem ensures that the optimization problem \ref{eq:optimal problem} admits an optimal control input $\ub^o\in U_{ad}$ together with an optimal actuator design $\rb^o\in K_{ad}$.

\begin{theorem} \label{thm:existence optimizer}
For  initial condition $\xb_0\in \ss$, let $\tau$ be such that the mild solution exists for all $\ub\in U_{ad}$, and all $\rb\in K_{ad}$. Under assumptions \ref{A} and \ref{B},  there exists a control input $\ub^o\in U_{ad}$ together with an actuator design $\rb^o\in K_{ad}$, that solves the optimization problem \ref{eq:optimal problem}.
\end{theorem}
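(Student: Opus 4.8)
The plan is to use the direct method of the calculus of variations: take a minimizing sequence, extract weakly convergent subsequences using the compactness afforded by the assumptions, and show that the weak limit is admissible and attains the infimum. First I would observe that the cost functional $J$ is bounded below by zero, since $\phi$ and $\psi$ are non-negative by assumption \ref{B}\ref{as:C3 B2}; hence the infimum $m = \inf J$ over $U_{ad} \times K_{ad}$ exists and is finite. Choose a minimizing sequence $(\ub_n, \rb_n) \in U_{ad} \times K_{ad}$ with $J(\ub_n, \rb_n; \xb_0) \to m$, and let $\xb_n = \mc{S}(\ub_n; \rb_n, \xb_0)$ be the corresponding mild solutions, which exist on $[0,\tau]$ by hypothesis.

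Next I would extract convergent subsequences. Since $U_{ad}$ is a bounded, closed, convex subset of the reflexive space $L^p(0,\tau;\cs)$, it is weakly sequentially compact, so along a subsequence $\ub_n \rightharpoonup \ub^o$ in $L^p(0,\tau;\cs)$ with $\ub^o \in U_{ad}$ (closed convex sets are weakly closed). Since $K_{ad}$ is compact in $\mathbb{K}$ by assumption \ref{B}\ref{as:B2}, a further subsequence gives $\rb_n \to \rb^o \in K_{ad}$. The uniform a priori bound from \cref{corollary1}, combined with the uniform bounds $\norm{\ub_n}{p} \le R$ and $\norm{\mc{B}(\rb_n)}{\mc{L}(\cs,\ss)} \le M_\mc{B}$, shows $\xb_n$ is bounded in $C(0,\tau;\ss)$; passing to a further subsequence yields $\xb_n \rightharpoonup \xb^o$ in $L^p(0,\tau;\ss)$.

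The heart of the argument, and what I expect to be the main obstacle, is to verify that the weak limit $\xb^o$ is itself the mild solution driven by $(\ub^o, \rb^o)$, i.e.\ that one may pass to the limit in the integral equation \eqref{eq:mild solution}. This is exactly where assumption \ref{B}\ref{as:B1} is needed: the boundedness of $\xb_n$ in $C(0,\tau;\ss)$ together with $\xb_n \rightharpoonup \xb^o$ in $L^p(0,\tau;\ss)$ gives $\F(\xb_n) \rightharpoonup \F(\xb^o)$ in $L^p(0,\tau;\ss)$, so the nonlinear term converges weakly; without this weak-continuity hypothesis the nonlinearity could fail to commute with the weak limit. For the input term I would split $\mc{B}(\rb_n)\ub_n - \mc{B}(\rb^o)\ub^o = (\mc{B}(\rb_n)-\mc{B}(\rb^o))\ub_n + \mc{B}(\rb^o)(\ub_n - \ub^o)$, controlling the first piece by the operator-norm continuity of $\mc{B}(\cdot)$ from \ref{B}\ref{as:B2} and the uniform bound on $\norm{\ub_n}{p}$, and handling the second by weak convergence of $\ub_n$ since $\mc{B}(\rb^o)$ is a fixed bounded operator. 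The convolution with the fixed semigroup $\mc{T}$ defines a bounded linear operator on $L^p$, hence is weakly continuous, so these weak convergences pass through the integrals and identify $\xb^o = \mc{S}(\ub^o; \rb^o, \xb_0)$.

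Finally I would conclude by lower semicontinuity. The functionals $\ub \mapsto \int_0^\tau \psi(\ub(t))\,dt$ and $\xb \mapsto \int_0^\tau \phi(\xb(t))\,dt$ are convex (or at least weakly lower semicontinuous via the weak lower semicontinuity of $\phi$ and $\psi$ assumed in \ref{B}\ref{as:C3 B2}) and non-negative, so $J(\ub^o, \rb^o; \xb_0) \le \liminf_{n\to\infty} J(\ub_n, \rb_n; \xb_0) = m$. Since $(\ub^o, \rb^o) \in U_{ad} \times K_{ad}$ is admissible, we also have $J(\ub^o, \rb^o; \xb_0) \ge m$, whence equality and $(\ub^o, \rb^o)$ is a minimizer. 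I would expect the delicate point to require care in arguing that the integral functionals are weakly lower semicontinuous from the pointwise weak lower semicontinuity of $\phi$ and $\psi$, which typically needs either convexity of the integrands or an appeal to a standard result (e.g.\ Ioffe's theorem) ensuring that integral functionals of weakly lower semicontinuous integrands inherit weak lower semicontinuity.
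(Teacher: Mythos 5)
Your proposal is correct and follows essentially the same direct-method outline as the paper's proof: minimizing sequence, weak sequential compactness of $U_{ad}$ in the reflexive space $L^p(0,\tau;\cs)$, compactness of $K_{ad}$, the a priori bound of \cref{corollary1}, assumption \ref{B}\ref{as:B1} to pass the nonlinearity through the weak limit, and lower semicontinuity to conclude. Two differences are worth recording. First, your splitting $\mc{B}(\rb_n)\ub_n-\mc{B}(\rb^o)\ub^o=(\mc{B}(\rb_n)-\mc{B}(\rb^o))\ub_n+\mc{B}(\rb^o)(\ub_n-\ub^o)$ is a genuine fill-in: the paper disposes of this step by simply citing assumption \ref{B}\ref{as:B2}, and your argument is the complete justification. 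Second, and more substantively, the delicate point you flag at the end is resolved in the paper by a different mechanism than the convexity/Ioffe route you reach for. The paper applies the weak continuity of the convolution operators (Barbu, Prop.~1.84) with target space $C(0,\tau;\ss)$ rather than $L^p(0,\tau;\ss)$. Since for each fixed $t$ and each functional $\xb^*\in\ss^*$ the map $\xb\mapsto\inn{\xb^*}{\xb(t)}$ is a bounded linear functional on $C(0,\tau;\ss)$, weak convergence of $\xb_n$ in $C(0,\tau;\ss)$ gives $\xb_n(t)\rightharpoonup\xb^o(t)$ in $\ss$ for \emph{every} $t$; then weak lower semicontinuity of $\phi$ applies pointwise in $t$, and Fatou's lemma (using $\phi\ge 0$) yields
\begin{equation}
\int_0^\tau\phi(\xb^o(t))\,dt\le\liminf_{n\to\infty}\int_0^\tau\phi(\xb_n(t))\,dt,\notag
\end{equation}
with no convexity of $\phi$ required. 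By staying in $L^p(0,\tau;\ss)$, where weak convergence gives no pointwise information, you make this step harder than it needs to be and are forced toward convexity hypotheses that assumption \ref{B} does not provide. (For the control term your concern is legitimate even for the paper's argument, since $\ub_n$ converges weakly only in $L^p(0,\tau;\cs)$; there it is convexity and continuity of $\psi$ --- available in the quadratic setting of assumption \ref{C}\ref{as:C3}, cf.\ \cref{lem:weak lower semi} --- that actually closes the argument.)
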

\begin{proof}
The cost function $J(\ub,\rb;\xb_0)$ is bounded from below, and thus it has an infimum, say $j(\xb_0)$. This infimum is finite by assumption. As a result, there is a sequence of inputs $\ub_n\in U_{ad}$ and actuator design $\rb_n\in K_{ad}$ such that
\begin{equation}
\lim_{n \to \infty} J(\ub_n,\rb_n;\xb_0)= j(\xb_0) .
\end{equation}

The set $U_{ad}$ is a  bounded subset of the reflexive space $L^p(0,\tau;\cs)$, $1<p<\infty$, and hence it is weakly sequentially  compact {\cite[Thm.~9.4.3]{wouk1979course}}. Since $U_{ad}$ is closed and convex, it is also weakly closed \cite[Thm.~2.11.]{troltzsch2010optimal}. These statements mean that there is a subsequence of $\ub_n$ that converges weakly to some element $\ub^o\in U_{ad}$. To simplify the notation, we denote the weakly convergent subsequence by $\ub_n$:
\begin{equation}
\ub_n(t)\rightharpoonup  \ub^o(t) \quad \text{as} \quad n\to \infty.
\end{equation} 
The compactness of $K_{ad}$ implies that there is also a subsequence of $\rb_n$ that converges to some  $\rb^o$ in $K_{ad}$. This subsequence is also indicated by $\rb_n$; that is
\begin{equation}
\rb_n\to \rb^o \quad \text{as} \quad n\to \infty.
\end{equation}
Using assumption \ref{B}\ref{as:B2}, it follows that
\begin{equation}
\mc{B}(\rb_n)\ub_n(t)\rightharpoonup \mc{B}(\rb^o)\ub^o(t) \quad \text{in} \quad L^p(0,\tau;\ss) \quad \text{as} \quad n\to \infty \label{eq:d8}.
\end{equation}
{According to Proposition 1.84 of \cite{barbu2012convexity}, every continuous linear map is weakly continuous, yielding
\begin{equation}\label{eq1}
\int_0^t \T(t-s)\B(\rb_n)\ub_n(s)ds\rightharpoonup \int_0^t \T(t-s)\mc{B}(\rb^o)\ub^o(s)ds \quad \text{in } C(0,\tau;\ss).
\end{equation}}

Moreover, by \cref{thm:mild}, for every pair $(\ub_n,\rb_n)$, there exists a state $\xb_n(t) \in C(0,\tau;\ss)$. The sequence $\{ \xb_n(t) \}$ is also bounded in $C(0,\tau;\ss)$ by \cref{corollary1}; that is
\begin{equation}
\|\xb_n\|_{C(0,\tau;\ss)} \le c_{\tau}\left(\normm{\xb_0}+M_\mc{B}R\right).
\end{equation}
The sequence $\xb_n(t)$ is bounded in $C(0,\tau;\ss)$ and so in $L^p(0,\tau;\ss)$ as well. The latter is a reflexive Banach space; this means that a subsequence of $\xb_n(t)$, denote it by $\xb_n(t)$ for simplicity, weakly converges to an element of $\xb^o$ in $L^p(0,\tau;\ss)$. {By assumption \ref{B}\ref{as:B1}, it follows that
\begin{equation}\label{eq:d9}
\F(\xb_n(t)) \rightharpoonup \F(\xb^o(t)), \quad \text{in } L^p(0,\tau;\ss),
\end{equation}
and also by Proposition 1.84 of \cite{barbu2012convexity}
\begin{equation}\label{eq2}
\int_0^t\T(t-s)\F(\xb_n(s))ds \rightharpoonup \int_0^t\T(t-s)\F(\xb^o(s))ds, \quad \text{in} \quad C(0,\tau;\ss).
\end{equation}
Recall that each $(\xb_n,\ub_n, \rb_n)$  satisfies
\begin{equation}\label{eq3}
\xb_n(t)=\mc{T}(t)\xb_0+\int_0^t\mc{T}(t-s)\F(\xb_n(s))ds+\int_0^t\T(t-s)\mc{B}(\rb_n)\ub_n(s) ds .
\end{equation}
Apply (\ref{eq1}) and (\ref{eq2}) to (\ref{eq3}), it follows that $\xb^o(t)$ is in $C(0,\tau;\ss)$. Note that the mild solution is unique; thus, $\xb^o(t)$ is the mild solution to IVP (\ref{IVP}) with input $\ub^o(t)$ and actuator design $\rb^o$, satisfying
\begin{equation}
\xb^o(t)=\mc{T}(t)\xb_0+\int_0^t\mc{T}(t-s)\F(\xb^o(s))ds+\int_0^t\T(t-s)\mc{B}(\rb^o)\ub^o(s)ds.
\end{equation}}

It  remains to show that $(\xb^o(t),\ub^o(t),\rb^o)$ minimizes $J(\ub,\rb;\xb_0)$. 
Recall from definition of the sequence $\ub_n$ and $\rb_n$ that
\begin{flalign}
j(\xb_0)&=\liminf_{n\to \infty} J(\ub_n,\rb_n;\xb_0)\notag \\
&=\liminf_{n\to \infty}\int_0^{\tau} \phi(\xb_n(t)) \, dt+\liminf_{n\to \infty} \int_0^{\tau} \psi(\ub_n(t)) \, dt.
\end{flalign}
 From assumption \ref{B}\ref{as:C3 B2}, the cost function is weakly lower semi-continuous in $\xb$ and $\ub$. This {together with Fatou's Lemma} implies
\begin{flalign}
j(\xb_0)\ge \int_0^{\tau} \phi(\xb^o(t)) \, dt+\int_0^{\tau} \psi(\ub^o(t)) \, dt=J(\ub^o,\rb^o;\xb_0) .
\end{flalign} 
Since $j(\xb_0)$ was defined to be the infimum, 
$$ j(\xb_0) =J(\ub^o,\rb^o;\xb_0). $$

 Therefore, for every initial condition $\xb_0\in \ss$, there exists an  control input $\ub^o(t)$ together with an actuator design $\rb^o$, with corresponding mild solution  $\xb^o(t)$ that achieves the minimum value of the cost function. 
\end{proof}

For a linear partial differential equation and quadratic cost, the optimal actuator problem may not be convex; see for example \cite[Fig. 7]{morris2011linear}. Uniqueness of the optimal control and actuator is not guaranteed.

\section{Optimality Conditions}\label{sec:characterizing}

In order to establish the first order optimality condition for an optimizer $(\ub^o,\rb^o)$, further regularity on the control-to-state map is needed.

\begin{definition}\label{def:S}
For each initial condition $\xb_0\in \ss$, and actuator design  $\rb\in K_{ad}$, the control-to-state operator is the operator $\mc{S}(\ub;\rb,\xb_0):U_{ad}\subset (L^p(0,\tau;\cs))\to L^p(0,\tau;\ss)$  that maps every input $\ub\in U_{ad}$ to the state $\xb\in L^p(0,\tau;\ss).$ It is described by
\begin{equation}
\xb(t)=\mc{T}(t)\xb_0+\int_0^t \mc{T}(t-s)\F(\xb(s))ds+\int_0^t \mc{T}(t-s)\mc{B}(\rb)\ub(s)ds.\notag
\end{equation}
\end{definition}

In next two theorems, it is proved that under certain assumptions, the control-to-state map is Lipschitz continuous in both $\ub$ and $\rb$. For the Lipschitz continuity with respect to the actuator design, a stronger assumption on the input operator $\mc{B}(\rb)$ than continuity in $\rb$ is needed. 

\begin{proposition}\label{pro:lipschitz S}
\begin{enumerate}[label=(\alph*),leftmargin=0cm,itemindent=1cm,labelwidth=\itemindent,labelsep=0cm,align=left]
\item \label{-a} Under assumption \ref{A}, for any initial condition $\xb_0\in \ss$, the control-to-state map $\mc{S}(\ub;\rb,\xb_0)$ is Lipschitz continuous in $\ub$, i.e., there exists a positive constant $L_{\ub}$ such that 
\begin{equation}\label{eq:lip in u}
\norm{\mc{S}(\ub_2;\rb,\xb_0)-\mc{S}(\ub_1;\rb,\xb_0)}{C(0,\tau;\ss)}\le L_{\ub} \norm{\ub_2-\ub_1}{p},
\end{equation}
for all $\ub_1$ and $\ub_2$ in $U_{ad}$, and $\rb\in K_{ad}$.

\item \label{-b} Under extra assumptions \ref{C}\ref{as:C2} and {the space $\as$ being a Banach space}, the control-to-state map $\mc{S}(\ub;\rb,\xb_0)$ is Lipschitz continuous in $\rb$, i.e., there exists a positive constant $L_{\rb}$ such that
\begin{equation}\label{eq:lip in r}
\norm{\mc{S}(\ub;\rb_2,\xb_0)-\mc{S}(\ub;\rb_1,\xb_0)}{C(0,\tau;\ss)}\le L_{\rb} \norm{\rb_2-\rb_1}{\mathbb{K}},
\end{equation}
for all $\rb_1$ and $\rb_2$ in $K_{ad}$, and $\ub\in U_{ad}$.
\end{enumerate}
\end{proposition}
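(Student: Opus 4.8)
The plan is to handle both parts with the device already used in \cref{corollary1}: represent the two states as solutions of the integral equation in \cref{def:S}, subtract them, estimate the difference in $\ss$ pointwise in $t$, and close the estimate with Gronwall's lemma \cite[Thm.~1.4.1]{zettl2005}. In each case the local Lipschitz bound of assumption \ref{A}\ref{as:A2} contributes a term $M_{\T}L_{\F\delta}\int_0^t\normm{\xb_2(s)-\xb_1(s)}\,ds$, which Gronwall absorbs; only the inhomogeneous term differs between the two parts.

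For part \ref{-a}, put $\xb_i=\mc{S}(\ub_i;\rb,\xb_0)$, $i=1,2$. Since both solutions share the same actuator design and initial state, subtracting the two integral representations cancels $\mc{T}(t)\xb_0$ and leaves
\begin{equation*}
\begin{split}
\xb_2(t)-\xb_1(t)={}&\int_0^t\mc{T}(t-s)\bigl(\F(\xb_2(s))-\F(\xb_1(s))\bigr)ds\\
&+\int_0^t\mc{T}(t-s)\mc{B}(\rb)\bigl(\ub_2(s)-\ub_1(s)\bigr)ds .
\end{split}
\end{equation*}
I would bound the first integral by $M_{\T}L_{\F\delta}\int_0^t\normm{\xb_2(s)-\xb_1(s)}\,ds$ via assumption \ref{A}\ref{as:A2}, and the second exactly as in \eqref{eq-hold}, i.e. by H\"older's inequality together with the uniform bound $M_\mc{B}$ of assumption \ref{A}\ref{as:A3}, obtaining $M_{\T}M_\mc{B}\tau^{(p-1)/p}\norm{\ub_2-\ub_1}{p}$. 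Gronwall's lemma and a supremum over $[0,\tau]$ then give \eqref{eq:lip in u} with $L_{\ub}=M_{\T}M_\mc{B}\tau^{(p-1)/p}e^{M_{\T}L_{\F\delta}\tau}$.

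For part \ref{-b}, put $\xb_i=\mc{S}(\ub;\rb_i,\xb_0)$; now $\ub$ is common and the inhomogeneous term becomes $\int_0^t\mc{T}(t-s)\bigl(\mc{B}(\rb_2)-\mc{B}(\rb_1)\bigr)\ub(s)\,ds$. The new ingredient is to upgrade assumption \ref{C}\ref{as:C2} to a genuine Lipschitz estimate $\norm{\mc{B}(\rb_2)-\mc{B}(\rb_1)}{\mc{L}(\cs,\ss)}\le M_{\mc{B}'}\norm{\rb_2-\rb_1}{\mathbb{K}}$. Using the convexity of $K_{ad}$ (assumption \ref{B}\ref{as:B2}), the segment $\theta\mapsto\rb_1+\theta(\rb_2-\rb_1)$ lies in $K_{ad}$, so I would apply the mean value inequality to $g(\theta)=\mc{B}(\rb_1+\theta(\rb_2-\rb_1))$, whose derivative is $g'(\theta)=\mc{B}'_{\rb_1+\theta(\rb_2-\rb_1)}(\rb_2-\rb_1)$, and bound $\norm{g'(\theta)}{\mc{L}(\cs,\ss)}$ uniformly using the boundedness of $\rb\mapsto\mc{B}'_{\rb}$ over the bounded set $K_{ad}$. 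With this bound in hand, the inhomogeneous term is controlled by $M_{\T}M_{\mc{B}'}\tau^{(p-1)/p}R\,\norm{\rb_2-\rb_1}{\mathbb{K}}$ through the same H\"older estimate and $\norm{\ub}{p}\le R$, and Gronwall again yields \eqref{eq:lip in r}.

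The integral estimates and the Gronwall step are routine, being essentially copies of \cref{thm:mild} and \cref{corollary1}. The one genuinely delicate point is the mean value inequality for the operator-valued map $g$ in \ref{-b}: since assumption \ref{C}\ref{as:C2} only supplies a G\^ateaux (directional) derivative, I must verify that $g$ is continuous on $[0,1]$ and differentiable on $(0,1)$ before invoking the Banach-space-valued mean value inequality, and confirm that the boundedness clause in assumption \ref{C}\ref{as:C2} indeed furnishes a single constant $M_{\mc{B}'}$ dominating $\norm{\mc{B}'_{\rb}}{\mc{L}(\as,\mc{L}(\cs,\ss))}$ along the entire segment.
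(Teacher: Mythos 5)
Your proposal matches the paper's proof essentially step for step: the same subtraction of the mild-solution representations, the same $M_{\T}L_{\F\delta}$/H\"older bounds, the same Gronwall closing, and the same constants $L_{\ub}=e^{M_{\T}L_{\F\delta}\tau}M_{\T}M_\mc{B}\tau^{(p-1)/p}$ and $L_{\rb}=e^{M_{\T}L_{\F\delta}\tau}M_{\T}\tau^{(p-1)/p}R L_{\mc{B}}$. In part \ref{-b} the paper simply asserts the bound $\norm{\mc{B}(\rb_2)-\mc{B}(\rb_1)}{\mc{L}(\cs,\ss)}\le L_\mc{B}\norm{\rb_2-\rb_1}{\mathbb{K}}$ with $L_\mc{B}=\sup\{\norm{\B'_\rb}{\mc L(\as,\mc L(\cs,\ss))}:\rb\in K_{ad}\}$; your mean-value-inequality argument along the segment $\rb_1+\theta(\rb_2-\rb_1)$, using convexity of $K_{ad}$ and the boundedness clause of assumption \ref{C}\ref{as:C2}, is precisely the justification the paper leaves implicit.
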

The proof of this proposition is straightforward; a proof is provided in \cref{ap:appendix a}.

{G\^ateaux} differentiability of the control-to-state map as well as its derivatives need to be formulated in order to characterize an optimizer.

For any $\xb^o \in C(0,\tau;\ss)$ define the time-varying  operator operator $\F'_{\xb^o(t)}$. At any $t>0$, this operator is linear on $\ss$. 
Consider the time-varying IVP  
\begin{equation}
\dot{\tilde{\xb}}(t)=(\mc{A}+\F'_{\xb^o(t)})\tilde{\xb}(t)+\mc{B}(\rb)\tilde{\ub}(t), \quad \tilde{\xb}(0)=0. \label{eq:ztilde_dum}
\end{equation}
The mild solution  is described by a two-parameter family of operators, say $\mc{U}(t,s)$, known as an evolution operator. 
 
 The following lemma relies on the existence results: Theorem 5.5.6 and Theorem 5.5.10 in \cite{fattorini1999infinite}.

\begin{lemma}
\begin{enumerate}[label=(\alph*),leftmargin=0cm,itemindent=1cm,labelwidth=\itemindent,labelsep=0cm,align=left]\label{lem:adjoint}
\item \label{a} The mild solution of IVP problem (\ref{eq:ztilde_dum}) is described by
\begin{equation}
\tilde{\xb}(t)=\int_0^t \mc{U}(t,s)\mc{B}(\rb)\tilde{\ub}(s) \, ds, \label{evolution-solution}
\end{equation}
in which $\mc{U}(t,s)$ is a strongly continuous evolution operator on $\ss$ for $0\le s\le t\le \tau$. 
\item \label{b} Let $\fb\in L^1(0,\tau;\ss)$, and consider the following final value problem (FVP) backward in time
\begin{equation}
\dot{\tilde{\pb}}(s)=-(\mc{A}^*+\F'^*_{\xb^o(s)})\tilde{\pb}(s)-\fb(s), \quad \tilde{\pb}(\tau)=0, \label{eq:FVP}
\end{equation}
then, the mild solution of this evolution equation satisfies
\begin{equation}
\tilde{\pb}(s)=\int_s^{\tau} \mc{U}^*(t,s)\fb(t) \, dt,
\end{equation}
where $\mc{U}^*(t,s)$ is the adjoint of $\mc{U}(t,s)$ on $\ss$ for every $0\le s\le t \le \tau$.
\end{enumerate}
\end{lemma}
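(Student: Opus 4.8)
The plan is to read (\ref{eq:ztilde_dum}) as a bounded, time-dependent perturbation of the semigroup generator $\mc{A}$ and to verify the hypotheses of the cited evolution-operator theorems. First I would set $\mc{D}(t):=\F'_{\xb^o(t)}$ and observe that $\xb^o\in C(0,\tau;\ss)$ has bounded range, being the continuous image of the compact interval $[0,\tau]$; assumption \ref{C}\ref{as:C1} then forces $\{\mc{D}(t)\}_{t\in[0,\tau]}$ to be uniformly bounded in $\mc{L}(\ss)$, say by $M_{\F'}$. Since \ref{A}\ref{as:A1} makes $\mc{A}$ the generator of a $C_0$-semigroup $\T$, the operator $\mc{A}+\mc{D}(t)$ is a bounded perturbation of a generator, and Theorem~5.5.6 of \cite{fattorini1999infinite} produces a strongly continuous evolution operator $\mc{U}(t,s)$, $0\le s\le t\le \tau$, given as the unique solution of the Volterra equation
\begin{equation}
\mc{U}(t,s)=\mc{T}(t-s)+\int_s^t \mc{T}(t-r)\F'_{\xb^o(r)}\mc{U}(r,s)\,dr,\notag
\end{equation}
whose successive-approximation series converges because of the uniform bounds $M_{\T}$ and $M_{\F'}$.

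To finish part (a) I would then check that the claimed formula $\tilde{\xb}(t)=\int_0^t \mc{U}(t,s)\mc{B}(\rb)\tilde{\ub}(s)\,ds$ coincides with the mild solution of (\ref{eq:ztilde_dum}), i.e. that it satisfies $\tilde{\xb}(t)=\int_0^t \mc{T}(t-s)\F'_{\xb^o(s)}\tilde{\xb}(s)\,ds+\int_0^t \mc{T}(t-s)\mc{B}(\rb)\tilde{\ub}(s)\,ds$. This is a direct computation: substitute the Volterra identity for $\mc{U}(t,s)$ into the candidate, interchange the order of integration over the triangle $0\le s\le r\le t$ via Fubini (licensed by the uniform operator bounds and $\tilde{\ub}\in L^p$), and recognize the inner integral as $\tilde{\xb}(r)$, which reproduces the mild identity.

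Part (b) is the dual statement. Because \ref{C}\ref{as:C3} makes $\ss$ a Hilbert space, $\mc{A}^*$ generates the adjoint semigroup $\mc{T}^*$ and the adjoints $\F'^*_{\xb^o(s)}$ inherit the uniform bound $M_{\F'}$, so the backward generator $\mc{A}^*+\F'^*_{\xb^o(s)}$ falls under the same framework. I would invoke Theorem~5.5.10 of \cite{fattorini1999infinite} to identify $\mc{U}^*(t,s)$ as the evolution operator of the adjoint equation; equivalently, the substitution $\sigma=\tau-s$ turns the final value problem (\ref{eq:FVP}) into a forward initial value problem to which part (a) applies directly. Verifying $\tilde{\pb}(s)=\int_s^{\tau} \mc{U}^*(t,s)\fb(t)\,dt$ then reduces to the backward analogue of the Fubini computation above, now requiring only $\fb\in L^1(0,\tau;\ss)$.

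The step I expect to be the main obstacle is the regularity of $t\mapsto\F'_{\xb^o(t)}$ needed to apply these theorems: assumption \ref{C}\ref{as:C1} supplies G\^ateaux differentiability and boundedness, but not continuity of the map $\xb\mapsto\F'_{\xb}$, so $t\mapsto\F'_{\xb^o(t)}$ is not manifestly continuous. I would dispose of this by fixing $\pb\in\ss$ and writing $\F'_{\xb^o(t)}\pb$ as the pointwise limit of the difference quotients $h^{-1}\big(\F(\xb^o(t)+h\pb)-\F(\xb^o(t))\big)$; each quotient is measurable in $t$ because $\xb^o$ is continuous and $\F$ is (locally Lipschitz, hence) continuous, so the limit $t\mapsto\F'_{\xb^o(t)}\pb$ is strongly measurable. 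Strong measurability together with the uniform bound $M_{\F'}$ is precisely what the Dyson-series construction of $\mc{U}(t,s)$ needs, and strong continuity of $\mc{U}$ itself is then automatic, since it is recovered from the integrated Volterra equation.
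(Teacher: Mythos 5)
Your proposal is correct and takes essentially the same route as the paper: both parts rest on Theorems 5.5.6 and 5.5.10 of \cite{fattorini1999infinite}, with the required $L^1$ bound on $\normm{\F'_{\xb^o(t)}}$ obtained from assumption \ref{C}\ref{as:C1} together with the uniform boundedness of $\xb^o(t)$ on $[0,\tau]$, and with the separable reflexive (here Hilbert) structure of $\ss$ identifying the backward evolution operator with $\mc{U}^*(t,s)$. Your explicit verification of the strong measurability of $t\mapsto\F'_{\xb^o(t)}\pb$ as a pointwise limit of continuous difference quotients fills in a hypothesis of Theorem 5.5.6 that the paper states but never checks, so that detail is a worthwhile addition rather than a deviation.
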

\begin{proof}
The time-invariant part of the state operator in (\ref{eq:ztilde_dum}),  $\mc{A}$, is the  generator of an strongly continuous semigroup. 
According to  \cite[Thm. 5.5.6]{fattorini1999infinite}, in  order for a  strongly continuous evolution operator $\mc{U}(t,s)$ to  exist so that (\ref{evolution-solution}) is the mild solution to the (\ref{eq:ztilde_dum}), it is sufficient that
for every $\tilde{\xb}\in \ss$ the mapping $t\mapsto\F'_{\xb^o(t)}\tilde{\xb}$ is strongly measurable and that a function $\alpha(t)\in L^1(0,\tau)$ exists such that
\begin{equation}
\normm{\F'_{\xb^o(t)}}\le \alpha(t), \quad t\in [0,\tau]. \label{eq:alpha}
\end{equation}
By assumption \ref{C}\ref{as:C1}, since the state $\xb^o(t)$ is uniformly bounded, the operator norm of $\F'_{\xb^o(t)}$ admits an upper bound for all $t\in[0,\tau]$. Consequently, a strongly continuous evolution operator $\mc{U}(t,s)$ exists so that (\ref{evolution-solution}) is the mild solution to (\ref{eq:ztilde_dum}).

Since the state space  $\ss$ is a separable reflexive Banach space, Theorem 5.5.10 of \cite{fattorini1999infinite} implies that the mild solution of (\ref{eq:FVP}) is described by an evolution operator. Moreover,  for every $0\le s\le t \le \tau$, this evolution operator is the adjoint on $\ss$ of the evolution operator $\mc{U}(t,s) .$
\end{proof}

\begin{proposition}\label{pro:frechet derivative}
Under assumption \ref{A}, and \ref{C}\ref{as:C1}, for every initial condition $\xb_0\in \ss$ and actuator design $\rb\in K_{ad}$, the control-to-state map $\mc{S}(\ub;\rb,\xb_0)$ is {G\^ateaux} differentiable in $\ub$ in the interior of $U_{ad}$. The {G\^ateaux} derivative of $\mc{S}(\ub;\rb,\xb_0)$ at $\ub^o$ {in the direction $\tilde{\ub}$} is
\begin{equation}
\mc{S}'_{\ub^o}\tilde{\ub}=\tilde{\xb}, \quad \forall \tilde{\ub}\in L^p(0,\tau;\mathbb{U}), \label{eq:derivative 1}
\end{equation}
where, defining $\xb^o(t)=\mc{S}(\ub^o;\rb,\xb_0)$,  $\tilde{\xb}$ is the mild solution to the IVP
\begin{equation}
\dot{\tilde{\xb}}(t)=(\mc{A}+\F'_{\xb^o(t)})\tilde{\xb}(t)+\mc{B}(\rb)\tilde{\ub}(t), \quad \tilde{\xb}(0)=0 \label{eq:ztilde}.
\end{equation}
The mild solution to this equation is given by the evolution operator  $\, \mc{U}(t,s)$  in \Cref{lem:adjoint}\ref{a}.
\end{proposition}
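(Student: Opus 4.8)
The plan is to verify the definition of the Gâteaux derivative directly. Fix a direction $\tilde{\ub}\in L^p(0,\tau;\cs)$ and set $\xb^h:=\mc{S}(\ub^o+h\tilde{\ub};\rb,\xb_0)$; I would show that the difference quotient $(\xb^h-\xb^o)/h$ converges to the mild solution $\tilde{\xb}$ of (\ref{eq:ztilde}) in $C(0,\tau;\ss)$ as $h\to 0$. Since $\ub^o$ lies in the interior of $U_{ad}$, the perturbed input $\ub^o+h\tilde{\ub}$ remains admissible for all sufficiently small $h$, so $\xb^h$ is well defined by \cref{thm:mild}, and $\tilde{\xb}\in C(0,\tau;\ss)$ by \cref{lem:adjoint}\ref{a}. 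Subtracting the mild-solution identities for $\xb^h$ and $\xb^o$, dividing by $h$, and subtracting the variation-of-constants form $\tilde{\xb}(t)=\int_0^t\mc{T}(t-s)[\F'_{\xb^o(s)}\tilde{\xb}(s)+\mc{B}(\rb)\tilde{\ub}(s)]\,ds$, the control terms cancel and the error $\eb_h:=(\xb^h-\xb^o)/h-\tilde{\xb}$ satisfies the linear Volterra identity $\eb_h(t)=\int_0^t\mc{T}(t-s)\,[\,h^{-1}(\F(\xb^h(s))-\F(\xb^o(s)))-\F'_{\xb^o(s)}\tilde{\xb}(s)\,]\,ds$, driven only by a nonlinear remainder.

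The crux is the treatment of that remainder. The difficulty, and the main obstacle, is that the difference quotient of $\F$ is formed in the \emph{varying} direction $(\xb^h-\xb^o)/h$, whereas assumption \ref{C}\ref{as:C1} supplies directional derivatives only along \emph{fixed} directions. I would resolve this by adding and subtracting $\F(\xb^o(s)+h\tilde{\xb}(s))$, splitting the remainder into (A) $h^{-1}[\F(\xb^h(s))-\F(\xb^o(s)+h\tilde{\xb}(s))]$ and (B) $h^{-1}[\F(\xb^o(s)+h\tilde{\xb}(s))-\F(\xb^o(s))]-\F'_{\xb^o(s)}\tilde{\xb}(s)$. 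Term (A) is handled by the local Lipschitz bound of \ref{A}\ref{as:A2}: since $\xb^h(s)-\xb^o(s)-h\tilde{\xb}(s)=h\,\eb_h(s)$, it is bounded by $L_{\F\delta}\normm{\eb_h(s)}$, which feeds back into the Volterra identity. Term (B) is exactly the Gâteaux difference quotient of $\F$ at $\xb^o(s)$ in the fixed direction $\tilde{\xb}(s)$, so it tends to $0$ pointwise in $s$ as $h\to 0$.

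Taking norms in the Volterra identity and using $\normm{\mc{T}(t)}\le M_\T$ then yields $\normm{\eb_h(t)}\le M_\T L_{\F\delta}\int_0^t\normm{\eb_h(s)}\,ds+M_\T\int_0^\tau\normm{(\mathrm{B})(s)}\,ds$, so Gronwall's lemma reduces the whole argument to showing $\int_0^\tau\normm{(\mathrm{B})(s)}\,ds\to 0$. For this I would invoke dominated convergence: term (B) converges to $0$ pointwise, and it is dominated uniformly in $h$ by an integrable function, since the Lipschitz bound controls its first part by $L_{\F\delta}\normm{\tilde{\xb}(s)}$, while assumption \ref{C}\ref{as:C1} (boundedness of $\xb\mapsto\F'_{\xb}$ on bounded sets, together with the uniform bound on $\xb^o$ from \cref{corollary1}) controls $\normm{\F'_{\xb^o(s)}\tilde{\xb}(s)}$ by a constant multiple of $\normm{\tilde{\xb}(s)}$, with $\tilde{\xb}\in C(0,\tau;\ss)\subset L^1(0,\tau;\ss)$. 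The uniform bound $\normm{(\xb^h-\xb^o)/h}\le L_{\ub}\norm{\tilde{\ub}}{p}$ from \cref{pro:lipschitz S}\ref{-a} keeps all states in a fixed ball, so a single Lipschitz constant $L_{\F\delta}$ applies throughout and the choice of $\delta$ is legitimate. Concluding, $\norm{\eb_h}{C(0,\tau;\ss)}\to 0$, which is precisely the claimed identity (\ref{eq:derivative 1}).
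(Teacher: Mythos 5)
Your proof is correct, but it takes a genuinely different route from the paper's, and in one respect a more careful one. The paper absorbs the linearized term into a time-varying generator: it sets $\xb_e=(\xb-\xb^o)/\epsilon-\tilde{\xb}$, writes $\dot{\xb}_e=(\mc{A}+\F'_{\xb^o(t)})\xb_e+\eb_\F(t)$ with $\eb_\F(t)=\tfrac{1}{\epsilon}\left(\F(\xb(t))-\F(\xb^o(t))-\F'_{\xb^o(t)}(\xb(t)-\xb^o(t))\right)$, represents $\xb_e(t)=\int_0^t\mc{U}(t,s)\eb_\F(s)\,ds$ via the evolution operator of \cref{lem:adjoint}\ref{a}, bounds $\eb_\F$ uniformly through the Lipschitz estimates of \cref{pro:lipschitz S}\ref{-a}, and finishes with the bounded convergence theorem. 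You instead stay with the semigroup $\mc{T}$ in a Volterra identity and split the remainder by adding and subtracting $\F(\xb^o+h\tilde{\xb})$, so that Gronwall reduces everything to the fixed-direction quotient (B). This splitting buys something real: the paper's remainder $\eb_\F$ is a difference quotient of $\F$ in the \emph{varying} direction $(\xb-\xb^o)/\epsilon$, and its claimed pointwise convergence to zero does not follow from G\^ateaux differentiability along fixed directions alone (it implicitly needs a Fr\'echet- or Hadamard-type uniformity of the limit); your decomposition uses only the fixed-direction limit that assumption \ref{C}\ref{as:C1} actually provides, with the varying-direction discrepancy absorbed into the Gronwall term via the local Lipschitz bound of assumption \ref{A}\ref{as:A2}. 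What the paper's route buys in exchange is the evolution-operator machinery, which is needed again anyway for the adjoint state in \cref{thm:characterizing}. Two small points you should make explicit: (i) your starting identity $\tilde{\xb}(t)=\int_0^t\mc{T}(t-s)\left[\F'_{\xb^o(s)}\tilde{\xb}(s)+\mc{B}(\rb)\tilde{\ub}(s)\right]ds$ must be identified with the paper's definition of the mild solution through $\mc{U}(t,s)$ — this equivalence is standard for bounded, strongly measurable perturbations of a generator (a Fubini argument applied to the construction of $\mc{U}$), but it is a step, not a tautology; (ii) your conclusion in $C(0,\tau;\ss)$ is stronger than needed and immediately yields the $L^p(0,\tau;\ss)$ convergence required by the definition of $\mc{S}'_{\ub^o}$ as a map into $L^p(0,\tau;\ss)$, so the proposition follows.
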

\begin{proof}
For sufficiently small $\epsilon$, there is a mild solution to IVP (\ref{IVP}) with input $\ub^o+\epsilon \tilde{\ub}$. Denote by $\xb=\mc{S}(\ub^o+\epsilon \tilde{\ub};\rb,\xb_0)$ the mild solution to the IVP
\begin{equation}
\dot{\xb}(t)=\mc{A}\xb(t)+\F(\xb(t))+\mc{B}(\rb)(\ub^o(t)+\epsilon \tilde{\ub}(t)), \quad \xb(0)=\xb_0.\label{eq:zp}
\end{equation}
The state $\xb^o=\mc{S}(\ub^o;\rb,\xb_0)$ is by definition the mild solution of the IVP 
\begin{equation}
\dot{\xb}^o(t)=\mc{A}\xb^o(t)+\F(\xb^o(t))+\mc{B}(\rb)\ub^o(t)  , \quad \xb^o(0)=\xb_0.    \label{eq:zstar}
\end{equation}
Define $\xb_{e}=(\xb-\xb^o)/\epsilon-\tilde{\xb}$, subtract the equations (\ref{eq:zstar}) and (\ref{eq:ztilde}) from (\ref{eq:zp}) to obtain
\begin{flalign}\label{eq:c1}
\dot{\xb}_{e}(t)=&(\mc{A}+\F'_{\xb^o(t)})\xb_{e}(t)\notag\\
&+\frac{1}{\epsilon}\left(\F(\xb(t))-\F(\xb^o(t))-\F'_{\xb^o(t)}(\xb(t)-\xb^o(t))\right), \quad \xb_{e}(0)=0.
\end{flalign}
Define $\eb_\F(t)$ as
\begin{equation}\label{e(t)}
\eb_\F(t)\coloneqq\frac{1}{\epsilon}\left(\F(\xb(t))-\F(\xb^o(t))-\F'_{\xb^o(t)}(\xb(t)-\xb^o(t))\right)
\end{equation}
Assumption \ref{C}\ref{as:C1} ensures that for each $t\in [0,\tau]$,  $\eb_\F(t)\to 0$ as $\epsilon \to 0$. It will be shown that $\eb_\F(t)$ is uniformly bounded. By \Cref{corollary1}, the norm of the states $\xb(t)$ and $\xb^o(t)$ is uniformly bounded over $[0,\tau]$ by some number $\delta$,
\begin{equation}
\delta\le c_\tau\left(\normm{\xb_0}+M_\B R\right).
\end{equation}
Use the local Lipschitz continuity of $\F(\cdot)$ (assumption \ref{A}\ref{as:A2}) and \Cref{pro:lipschitz S}\ref{-a} to obtain
\begin{flalign}
\frac{1}{\epsilon}\normm{\F(\xb(t))-\F(\xb^o(t))}&\le \frac{1}{\epsilon}L_{\F\delta}\normm{\xb(t)-\xb^o(t)}\notag\\
&\le L_{\F\delta} L_\ub \norm{\tilde{\ub}}{p}.\label{eq8}
\end{flalign}
Letting $M_{\F'}=\sup \{\|\F'_{\xb^o(t)}\|:\; t\in[0,\tau]\}$, assumption \ref{C}\ref{as:C1} together with \Cref{pro:lipschitz S}\ref{-a} also yields
\begin{flalign}\label{eq9}
\frac{1}{\epsilon}\normm{\F'_{\xb^o(t)}(\xb(t)-\xb^o(t))}
&\le M_{\F'}L_\ub\norm{\tilde{\ub}}{p}.
\end{flalign} 
Combining (\ref{eq8}) and (\ref{eq9}) leads to 
\begin{equation}\label{bound on e(t)}
\normm{\eb_\F(t)}\le \left(L_{\F\delta}+M_{\F'}\right)L_\ub\norm{\tilde{\ub}}{p}, \quad \forall t\in [0,\tau].
\end{equation}

Now substitute (\ref{e(t)}) into (\ref{eq:c1}).  The state $\xb_{e}$ is the mild solution to  the IVP
\begin{equation}
\dot{\xb}_{e}(t)=(\mc{A}+\F'_{\xb^o(t)})\xb_{e}(t)+\eb_\F(t), \quad \xb_{e}(0)=0. \label{eq:lastone1} 
\end{equation}
Recall that the mild solution of this evolution equation is described by an evolution operator $\mc{U}(t,s)$ by \Cref{lem:adjoint}\ref{a}. Let $M_{\mc{U}}$ be an upper bound for the operator norm of $\mc{U}(t,s)$ over $0\le t\le s \le \tau$, the mild solution to (\ref{eq:lastone1}) satisfies the estimate
\begin{flalign}
\norm{\xb_{e}}{L^p(0,\tau;\ss)}&\le \tau^{1/p}\norm{\xb_{e}}{C(0,\tau;\ss)} \nonumber \\
&\le \tau^{1/p}M_{\mc{U}}\int_0^{\tau}\normm{\eb_\F(t)}dt. \label{eq-int}
\end{flalign}
Since $\lim_{\epsilon  \to 0 } \normm{\eb_\F(t)}= 0$ for each $t \in [0,\tau]$ and $\normm{\eb_\F(t)}$ is uniformly bounded over $[0,\tau]$ for all $\epsilon ,$ the  bounded convergence theorem implies that  the integral in (\ref{eq-int}) converges to zero. Thus,
\begin{equation}\label{eq:c6}
\lim_{\epsilon\to 0}\norm{\frac{1}{\epsilon}\left(\mc{S}(\ub^o+\epsilon\tilde{\ub};\rb,\xb_0)-\mc{S}(\ub^o;\rb,\xb_0)\right)-\mc{S}'_{\ub^o}{\tilde{\ub}}}{L^p(0,\tau;\ss)}=\lim_{\epsilon\to 0}\norm{\xb_{e}}{L^p(0,\tau;\ss)}=0.\notag
\end{equation}
This proves that $\mc{S}'_{\ub^o}{\tilde{\ub}}$ is the G\^ateaux derivative of $\mc{S}(\ub;\rb,\xb_0)$ at $\ub^o$ in the direction $\tilde{\ub}$.
\end{proof}

\begin{proposition}\label{pro:frechet derivative in r}
Under assumption \ref{A}, \ref{C}\ref{as:C1}-\ref{C}\ref{as:C3}, for every initial condition $\xb_0\in \ss$ and control input $\ub\in U_{ad}$, the control-to-state map $\mc{S}(\ub;\rb,\xb_0)$ is {G\^ateaux} differentiable in $\rb$ in the interior of $K_{ad}$. The {G\^ateaux} derivative of $\mc{S}(\ub;\rb,\xb_0)$ at $\rb^o$ {in the direction $\tilde{\rb}$} is
\begin{equation}
\mc{S}'_{\rb^o}\tilde{\rb}=\tilde{\yb}, \quad \forall \tilde{\rb}\in \mathbb{K},
\end{equation}
where, defining $\xb^o(t)=\mc{S}(\ub;\rb^o,\xb_0)$,  $\tilde{\yb}$ is the mild solution to the IVP
\begin{equation}
\dot{\tilde{\yb}}(t)=(\mc{A}+\F'_{\xb^o(t)})\tilde{\yb}(t)+\left(\mc{B}'_{\rb^o}\tilde{\rb}\right)\ub(t), \quad \tilde{\yb}(0)=0. \label{eq:ytilde}
\end{equation}
\end{proposition}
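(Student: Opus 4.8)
The plan is to mirror the proof of \Cref{pro:frechet derivative}, perturbing the actuator design rather than the input. Fix a direction $\tilde{\rb}\in\mathbb{K}$. Since $\rb^o$ is interior to $K_{ad}$, for all sufficiently small $\epsilon$ the design $\rb^o+\epsilon\tilde{\rb}$ lies in $K_{ad}$, so by \Cref{thm:mild} the perturbed mild solution $\xb=\mc{S}(\ub;\rb^o+\epsilon\tilde{\rb},\xb_0)$ exists; write $\xb^o=\mc{S}(\ub;\rb^o,\xb_0)$. Setting $\yb_e=(\xb-\xb^o)/\epsilon-\tilde{\yb}$ and subtracting the perturbed IVP, the nominal IVP for $\xb^o$, and \eqref{eq:ytilde}, I would add and subtract $\F'_{\xb^o(t)}(\xb-\xb^o)/\epsilon$ to collect the linear part into $(\mc{A}+\F'_{\xb^o(t)})\yb_e$. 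This exhibits $\yb_e$ as the mild solution of
\[
\dot{\yb}_e(t)=(\mc{A}+\F'_{\xb^o(t)})\yb_e(t)+\eb_\F(t)+\eb_\B(t), \quad \yb_e(0)=0,
\]
where $\eb_\F$ is the nonlinearity residual of \eqref{e(t)} (with $\xb$ now the $\rb$-perturbed state) and $\eb_\B(t)=\Delta_\epsilon\,\ub(t)$ with the $t$-independent operator $\Delta_\epsilon:=\frac{1}{\epsilon}\big(\mc{B}(\rb^o+\epsilon\tilde{\rb})-\mc{B}(\rb^o)\big)-\mc{B}'_{\rb^o}\tilde{\rb}$. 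Note that $(\mc{B}'_{\rb^o}\tilde{\rb})\ub\in L^p(0,\tau;\ss)$ by the boundedness clause of assumption \ref{C}\ref{as:C2}, so \eqref{eq:ytilde} is well posed and, by \Cref{lem:adjoint}\ref{a}, representable through the evolution operator $\mc{U}(t,s)$. With $M_{\mc{U}}$ a uniform bound for $\mc{U}(t,s)$,
\[
\norm{\yb_e}{L^p(0,\tau;\ss)}\le \tau^{1/p}M_{\mc{U}}\int_0^\tau \big(\normm{\eb_\F(t)}+\normm{\eb_\B(t)}\big)\,dt.
\]

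The residual $\eb_\F$ is controlled exactly as in \Cref{pro:frechet derivative}, but invoking \Cref{pro:lipschitz S}\ref{-b} in place of \Cref{pro:lipschitz S}\ref{-a}: the bound $\normm{\xb(t)-\xb^o(t)}\le \epsilon L_{\rb}\norm{\tilde{\rb}}{\mathbb{K}}$ makes $\eb_\F(t)$ uniformly bounded by $(L_{\F\delta}+M_{\F'})L_{\rb}\norm{\tilde{\rb}}{\mathbb{K}}$, while assumption \ref{C}\ref{as:C1} gives $\eb_\F(t)\to 0$ for each $t$; the bounded convergence theorem then yields $\int_0^\tau\normm{\eb_\F(t)}\,dt\to 0$. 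For the new term I would argue differently: by the G\^ateaux differentiability of $\mc{B}(\cdot)$ in $\rb$ (assumption \ref{C}\ref{as:C2}), $\norm{\Delta_\epsilon}{\mc{L}(\cs,\ss)}\to 0$ as $\epsilon\to 0$, and since $\eb_\B(t)=\Delta_\epsilon\ub(t)$ with $\Delta_\epsilon$ independent of $t$,
\[
\int_0^\tau\normm{\eb_\B(t)}\,dt\le \norm{\Delta_\epsilon}{\mc{L}(\cs,\ss)}\int_0^\tau\norm{\ub(t)}{\cs}\,dt\le \norm{\Delta_\epsilon}{\mc{L}(\cs,\ss)}\,\tau^{(p-1)/p}\norm{\ub}{p}\longrightarrow 0.
\]
Combining the two limits gives $\norm{\yb_e}{L^p(0,\tau;\ss)}\to 0$, i.e. $\tilde{\yb}=\mc{S}'_{\rb^o}\tilde{\rb}$, as claimed.

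The only genuinely new ingredient relative to \Cref{pro:frechet derivative} is the control-operator residual $\eb_\B$, and I expect the main obstacle to be disposing of $\int_0^\tau\normm{\eb_\B(t)}\,dt$ while $\ub$ is merely in $L^p(0,\tau;\cs)$ and hence not pointwise bounded: the bounded convergence theorem is unavailable, so I instead factor $\eb_\B(t)=\Delta_\epsilon\ub(t)$ and trade the $t$-independent operator-norm convergence of $\Delta_\epsilon$ against the H\"older factor $\tau^{(p-1)/p}\norm{\ub}{p}$, exactly the estimate already used in \eqref{eq-hold}. The more delicate point, inherited verbatim from \Cref{pro:frechet derivative}, is the pointwise claim $\eb_\F(t)\to 0$: the perturbation $\xb-\xb^o$ is $O(\epsilon)$ but in a direction that itself varies with $\epsilon$, so G\^ateaux differentiability of $\F$ along fixed directions is being applied along a moving one; I would handle it as in that proposition, through the uniform bound on $\eb_\F$ and the bounded convergence theorem.
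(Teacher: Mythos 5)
Your proposal is correct and is essentially the paper's own proof in \cref{ap:appendix d}: the same difference quotient $\xb_e=(\xb-\xb^o)/\epsilon-\tilde{\yb}$, the same two residuals $\eb_\F(t)$ and $\eb_\B(t)=\Delta_\epsilon\ub(t)$ with the $t$-independent operator $\Delta_\epsilon$, the evolution-operator representation of \cref{lem:adjoint}\ref{a}, the uniform bound on $\eb_\F$ via \cref{pro:lipschitz S}\ref{-b} combined with the bounded convergence theorem, and the operator-norm convergence $\norm{\Delta_\epsilon}{\mc{L}(\cs,\ss)}\to 0$ traded against the finite factor $\norm{\ub}{1}\le \tau^{(p-1)/p}\norm{\ub}{p}$. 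The subtlety you flag about applying G\^ateaux differentiability of $\F$ along the $\epsilon$-dependent direction $(\xb(t)-\xb^o(t))/\epsilon$ is present in, and handled identically by, the paper's argument (via assumption \ref{C}\ref{as:C1} and the uniform bound on $\eb_\F$), so your proof coincides with the published one.
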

The proof of this proposition is similar to that of \cref{pro:frechet derivative}; a proof is provided in \cref{ap:appendix d}. 

Now that  differentiability and derivatives of the control-to-state map has been established, the first order necessary conditions for a pair $(\ub^o,\rb^o)$ to be a local optimizer can be derived. In order to place the problem in a Hilbert space, assumptions \ref{C}\ref{as:C3} and \ref{C}\ref{as:C3} are used, assuming that the spaces are Hilbert spaces and defining a cost function. It will also be assumed that $p=2$, considering inputs in $L^2(0,\tau;\cs)$. It is shown in the following lemma that this cost function is consistent with previous assumptions on the cost function (assumption \ref{B}\ref{as:C3 B2}).

\begin{lemma}\label{lem:weak lower semi}
The cost function in assumption \ref{C}\ref{as:C3} satisfies assumption \ref{B}\ref{as:C3 B2}; that is, it is weakly lower semi-continuous in $\xb$ and $\ub$.
\end{lemma}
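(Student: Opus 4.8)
The plan is to verify the two requirements of assumption \ref{B}\ref{as:C3 B2} separately: non-negativity and weak lower semi-continuity. Non-negativity is immediate from the sign conditions in assumption \ref{C}\ref{as:C3}. Since $\mc{Q}$ is positive semi-definite, $\phi(\xb)=\inn{\mc{Q}\xb}{\xb}\ge 0$ for every $\xb\in\ss$, and since $\mc{R}$ is positive definite, and therefore positive semi-definite, $\psi(\ub)=\inn{\mc{R}\ub}{\ub}_{\cs}\ge 0$ for every $\ub\in\cs$. It then remains only to establish weak lower semi-continuity of each functional.

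For this I would show that $\phi$ and $\psi$ are convex and norm-continuous, and then invoke the standard fact that on a Hilbert space a convex, norm-continuous functional is weakly lower semi-continuous (see, e.g., \cite{troltzsch2010optimal}). Convexity of $\phi$ would follow directly from the positive semi-definiteness of $\mc{Q}$: for $\xb_1,\xb_2\in\ss$ and $\lambda\in[0,1]$, expanding the quadratic form and using self-adjointness yields
\begin{equation}
\lambda\phi(\xb_1)+(1-\lambda)\phi(\xb_2)-\phi(\lambda\xb_1+(1-\lambda)\xb_2)=\lambda(1-\lambda)\inn{\mc{Q}(\xb_1-\xb_2)}{\xb_1-\xb_2}\ge 0.\notag
\end{equation}
Norm-continuity would follow from boundedness of $\mc{Q}$ together with the Cauchy--Schwarz inequality, which give $|\phi(\xb_2)-\phi(\xb_1)|\le \norm{\mc{Q}}{\mc{L}(\ss)}(\normm{\xb_1}+\normm{\xb_2})\normm{\xb_2-\xb_1}$, so that $\phi$ is Lipschitz on bounded subsets of $\ss$. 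Replacing $\mc{Q}$ by $\mc{R}$ and the inner product on $\ss$ by that on $\cs$, the identical two computations establish convexity and norm-continuity of $\psi$.

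The only non-elementary ingredient here is the standard theorem that convexity together with norm-continuity implies weak lower semi-continuity; everything else reduces to the sign and boundedness hypotheses on $\mc{Q}$ and $\mc{R}$, so I expect no genuine obstacle. An equivalent self-contained route that avoids citing that theorem is to use the bounded self-adjoint square roots $\mc{Q}^{1/2}$ and $\mc{R}^{1/2}$ to write $\phi(\xb)=\normm{\mc{Q}^{1/2}\xb}^2$ and $\psi(\ub)=\norm{\mc{R}^{1/2}\ub}{\cs}^2$; since a bounded linear operator maps weakly convergent sequences to weakly convergent sequences (Proposition 1.84 of \cite{barbu2012convexity}) and the norm is weakly lower semi-continuous, any weak convergence $\xb_n\rightharpoonup\xb$ forces $\liminf_{n\to\infty}\phi(\xb_n)\ge\phi(\xb)$, and analogously for $\psi$. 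Either way, the lemma confirms that the concrete quadratic cost of assumption \ref{C}\ref{as:C3} falls within the abstract hypotheses of assumption \ref{B}\ref{as:C3 B2} relied upon in \cref{thm:existence optimizer}.
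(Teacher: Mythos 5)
Your proposal is correct and takes essentially the same route as the paper: both arguments establish convexity and norm-continuity of the quadratic functionals and then invoke the standard theorem that a continuous convex functional on a Banach space is weakly lower semi-continuous (the paper cites Theorem 13.2.2 of Wouk for this step). Your extra verification of non-negativity and the alternative self-contained argument via the square roots $\mc{Q}^{1/2}$, $\mc{R}^{1/2}$ and weak lower semi-continuity of the norm are sound additions, but the core of the proof coincides with the paper's.
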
 
\begin{proof}
The cost function $J(\ub,\rb;\xb_0)$ in assumption \ref{C}\ref{as:C3} is continuous and convex function in both $\xb$ and $\ub$. That is, {letting $\lambda\in (0,1)$},
\begin{gather}
\int_0^{\tau}\inn{\mc{Q}\xb_n(t)}{\xb_n(t)}\, dt\to \int_0^{\tau}\inn{\mc{Q}\xb(t)}{\xb(t)}\, dt \quad \text{as} \quad \xb_n\to \xb \quad \text{in} \quad L^p(0,\tau;\ss),\notag\\ 
\inn{\lambda \mc{Q}\xb_1 +(1-\lambda)\mc{Q}\xb_2}{\lambda\xb_1+(1-\lambda)\xb_2}\le \lambda \inn{\mc{Q}\xb_1}{\xb_1}+(1-\lambda)\inn{\mc{Q}\xb_2}{\xb_2},\notag
\end{gather}
and a similar argument for $\ub$. {According to Theorem 13.2.2 in \cite{wouk1979course} and the corollary thereafter}, if a functional defined on a Banach space is continuous and convex; then, it is also weakly lower semi-continuous. Therefore, the cost function $J(\ub,\rb;\xb_0)$ is weakly lower semi-continuous in both $\xb$ and $\ub$.
\end{proof}
The next theorem derives the first order necessary conditions for an optimizer of the optimization problem \ref{eq:optimal problem}.

\begin{theorem}\label{thm:characterizing}
Suppose assumptions \ref{A}\ref{as:A1} and \ref{C} hold,
For any initial condition $\xb_0\in\ss$, let the pair $(\ub^o,\rb^o){\in U_{ad}\times K_{ad}}$ be a local minimizer of the optimization problem \ref{eq:optimal problem} with the optimal trajectory $\xb^o=\mc{S}(\ub^o;\rb^o,\xb_0).$ Let 
 $\pb^o(t)$, the adjoint state, indicate the mild solution of the final value problem
\begin{equation}
\dot{\pb}^o(t)=-(\mc{A}^*+\F'^*_{\xb^o(t)})\pb^o(t)-\mc{Q}\xb^o(t), \quad \pb^o(\tau)=0.\notag
\end{equation}  
If $(\ub^o,\rb^o)$ is in the interior of $ U_{ad}\times K_{ad}$ then $(\ub^o,\rb^o)$ satisfies
\begin{subequations}\label{eq:optimality}
\begin{flalign}
&\ub^o(t)=-\mc{R}^{-1}\mc{B}^*(\rb^o)\pb^o(t),\label{eq:optimality1}\\
&\int_0^{\tau} (\mc{B}'_{\rb^o}\ub^o(t))^*\pb^o(t)\, dt.\label{eq:optimality2}
\end{flalign}%
\end{subequations}
\end{theorem}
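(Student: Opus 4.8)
The plan is to use that at an interior local minimizer every G\^ateaux derivative of the reduced cost $j(\ub,\rb):=J(\ub,\rb;\xb_0)$, evaluated along the control-to-state map $\xb=\mc S(\ub;\rb,\xb_0)$, must vanish, and then to eliminate the state sensitivities with the help of the adjoint state $\pb^o$. First I would freeze $\rb=\rb^o$ and differentiate $j$ with respect to $\ub$ in an admissible direction $\tilde\ub$. By the chain rule, the self-adjointness of $\mc Q$ and $\mc R$, and \cref{pro:frechet derivative}, this gives
\[
j'_{\ub^o}\tilde\ub = 2\int_0^\tau \inn{\mc Q\xb^o(t)}{\tilde\xb(t)}\,dt + 2\int_0^\tau \inn{\mc R\ub^o(t)}{\tilde\ub(t)}_{\cs}\,dt,
\]
where $\tilde\xb=\mc S'_{\ub^o}\tilde\ub$ is the mild solution of the sensitivity problem \eqref{eq:ztilde}. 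The analogous computation in the $\rb$ direction, using \cref{pro:frechet derivative in r}, replaces $\tilde\xb$ by $\tilde\yb=\mc S'_{\rb^o}\tilde\rb$ from \eqref{eq:ytilde} and drops the $\mc R$ term.

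The heart of the argument is to rewrite the state-dependent term $\int_0^\tau\inn{\mc Q\xb^o(t)}{\tilde\xb(t)}\,dt$ in terms of the input. Rather than attempt a formal integration by parts on mild solutions, I would substitute the explicit evolution-operator representations from \cref{lem:adjoint}: namely $\tilde\xb(t)=\int_0^t\mc U(t,s)\mc B(\rb^o)\tilde\ub(s)\,ds$ from part~\ref{a}, and, choosing $\fb=\mc Q\xb^o$ in part~\ref{b}, $\pb^o(s)=\int_s^\tau\mc U^*(t,s)\mc Q\xb^o(t)\,dt$. Inserting the first, interchanging the order of integration by Fubini, transferring $\mc U(t,s)$ onto its adjoint $\mc U^*(t,s)$, and recognizing the inner integral as $\pb^o(s)$ produces the duality identity
\[
\int_0^\tau\inn{\mc Q\xb^o(t)}{\tilde\xb(t)}\,dt = \int_0^\tau\inn{\mc B^*(\rb^o)\pb^o(s)}{\tilde\ub(s)}_{\cs}\,ds.
\]

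Granting this identity, the stationarity condition $j'_{\ub^o}\tilde\ub=0$ becomes $\int_0^\tau\inn{\mc B^*(\rb^o)\pb^o(t)+\mc R\ub^o(t)}{\tilde\ub(t)}_{\cs}\,dt=0$ for every admissible direction $\tilde\ub$; the fundamental lemma of the calculus of variations then forces $\mc R\ub^o(t)+\mc B^*(\rb^o)\pb^o(t)=0$ a.e., and since $\mc R$ is boundedly invertible this is \eqref{eq:optimality1}. For the actuator condition I would run the same duality computation with $\tilde\yb$ in place of $\tilde\xb$, obtaining $\int_0^\tau\inn{\mc Q\xb^o(t)}{\tilde\yb(t)}\,dt=\int_0^\tau\inn{\pb^o(t)}{(\mc B'_{\rb^o}\tilde\rb)\ub^o(t)}\,dt$; the definition of $(\mc B'_{\rb^o}\ub^o)^*$ then moves $\tilde\rb$ out of the pairing, and $j'_{\rb^o}\tilde\rb=0$ for all $\tilde\rb$ forces the integral in \eqref{eq:optimality2} to vanish.

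I expect the main obstacle to be the rigorous justification of the duality identity for mild rather than classical solutions. The formal manipulation $\frac{d}{dt}\inn{\pb^o(t)}{\tilde\xb(t)}$ is unavailable, so one must argue entirely through the evolution-operator representations of \cref{lem:adjoint}, and in particular justify the Fubini interchange and the passage from $\mc U(t,s)$ to $\mc U^*(t,s)$ using the strong continuity and uniform boundedness of the evolution operator together with the $L^1$ integrability of $\mc Q\xb^o$. One must also check that the admissible directions $\tilde\ub$ and $\tilde\rb$ fill out a neighborhood---guaranteed by $(\ub^o,\rb^o)$ lying in the interior of $U_{ad}\times K_{ad}$---so that the fundamental lemma and the arbitrariness of $\tilde\rb$ genuinely apply.
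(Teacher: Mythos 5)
Your proposal is correct and follows essentially the same route as the paper's proof: differentiate the quadratic cost through the control-to-state map using \cref{pro:frechet derivative} and \cref{pro:frechet derivative in r}, rewrite the state-dependent term via the evolution-operator representations of \cref{lem:adjoint} (Fubini interchange plus passage to $\mc{U}^*(t,s)$) so that the adjoint state $\pb^o$ appears, and invoke interiority to turn the resulting stationarity conditions into \eqref{eq:optimality1} and \eqref{eq:optimality2}. The only cosmetic difference is that the paper computes the adjoint operator $\mc{S}_{\ub^o}'^*$ abstractly and then applies it to $\mc{Q}\xb^o$ (concluding via the variational inequality of Theorem 1.46 in \cite{hinze2008optimization}), whereas you substitute $\fb=\mc{Q}\xb^o$ directly into the duality identity and finish with the fundamental lemma; the computations are identical.
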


\begin{proof}
{To derive the optimality conditions (\ref{eq:optimality}), the G\^ateaux derivative of the cost function $J(\ub,\rb;\xb_0)$ with respect to $\ub\in U_{ad}$ and $\rb\in K_{ad}$ is calculated.} Use assumption \ref{C}\ref{as:C3}, the cost function is sum of two inner products in the Hilbert spaces $L^2(0,\tau;\ss)$ and $L^2(0,\tau;\cs)$; that is
\begin{equation}
J(\ub,\rb;\xb_0)=\inn{\mc{Q}\xb}{\xb}_{L^2(0,\tau;\ss)}+\inn{\mc{R}\ub}{\ub}_{L^2(0,\tau;\cs )}.
\end{equation}
Thus, {G\^ateaux} derivative of $J$ at $\ub^o$ along $\hb_{\ub}$ is
\begin{align}
J'_{\ub^o}\hb_{\ub}&=2\inn{\mc{Q}\mc{S}(\ub^o;\rb^o,\xb_0)}{\mc{S}'_{\ub^o}\hb_{\ub}}_{L^2(0,\tau;\ss)}+2\inn{\mc{R}\ub^o}{\hb_{\ub}}_{L^2(0,\tau;\cs )} \nonumber \\
&=2\inn{\mc{S}_{\ub^o}'^*\mc{Q}\mc{S}(\ub^o;\rb^o,\xb_0)+\mc{R}\ub^o}{\hb_{\ub}}_{L^2(0,\tau;\cs )}. \label{eq:derivative cost}
\end{align}
To calculate the adjoint operator $\mc{S}_{\ub^o}'^*$, let $\tilde{\ub}(t) \in L^2 (0,\tau; \mathbb U)$, $\tilde{\xb}(t) \in L^2 (0,\tau; \ss)$  be arbitrary. Using \cref{lem:adjoint},
\begin{flalign}
\inn{\tilde{\xb}}{{\mc{S}'_{\ub^o}}\tilde{\ub}}_{L^2(0,\tau;\ss )} &=\int_0^{\tau}\inn{\tilde{\xb}(t)}{\int_0^t\mc{U}(t,s)\mc{B}(\rb^o)\tilde{\ub}(s)\,ds}\,dt\notag\\
&=\int_0^{\tau}\int_s^{\tau}\inn{\tilde{\xb}(t)}{\mc{U}(t,s)\mc{B}(\rb^o)\tilde{\ub}(s)}\,dtds\notag\\
&=\int_0^{\tau}\inn{\mc{B}^*(\rb^o)\int_s^{\tau}\mc{U}^*(t,s)\tilde{\xb}(t) dt}{\tilde{\ub}	(s)}_\cs\,ds.
\end{flalign}
Thus,
$$(\mc{S}_{\ub^o}'^*\tilde{\xb})(s)=\mc{B}^*(\rb^o)\int_s^{\tau}\mc{U}^*(t,s)\tilde{\xb}(t) dt. $$
Define  $\tilde{\pb}(s)=\int_s^{\tau}\mc{U}^*(t,s)\tilde{\xb}(t) dt. $ By \cref{lem:adjoint}\ref{b}, $\tilde{\pb}(s) $  is the mild solution of the following FVP solved backward in time
\begin{equation}
\dot{\tilde{\pb}}(s)=-(\mc{A}^*+\F'^*_{\xb^o(s)})\tilde{\pb}(s)-\tilde{\xb}(s), \quad \tilde{\pb}(\tau)=0 \label{eq:adjoint implict} .
\end{equation}
It follows that
\begin{equation}\label{eq:d14}
(\mc{S}_{\ub^o}'^*\tilde{\xb})(s)=\mc{B}^*(\rb^o)\tilde{\pb}(s).
\end{equation} 
Incorporating (\ref{eq:d14}) into (\ref{eq:derivative cost}), and using Theorem 1.46 of \cite{hinze2008optimization} yields the optimality condition
\begin{equation}
\inn{\mc{B}^*(\rb^o)\pb^o+\mc R\ub^o}{\ub-\ub^o}_{{L^2(0,\tau;\cs )}}\ge 0, \quad \forall \ub \in U_{ad},
\end{equation}
where $\pb^o(s)$ solves 
\begin{equation}
\dot{\pb}^o(s)=-(\mc{A}+\F'_{\xb^o(s)})^*\pb^o(s)-\mc{Q}\xb^o(s), \quad \pb^o(\tau)=0.
\label{p0}
\end{equation}
Since $\mc R$ is positive-definite, and hence, invertible, inequality (\ref{eq:optimality1}) follows.
 
Taking the directional derivative of $J(\ub^o,\cdot\, ; \xb_0)$ at $\rb^o$ along $\hb_{\rb}$ yields
\begin{flalign}
J'_{\rb^o}\hb_{\rb}&=2\inn{\mc{Q}\mc{S}(\ub^o;\rb^o,\xb_0)}{\mc{S}'_{\rb^o}\hb_{\rb}}_{L^2(0,\tau;\ss )} \notag \\
&=2\inn{\mc{S}_{\rb^o}'^*\mc{Q}\mc{S}(\ub^o;\rb^o,\xb_0)}{\hb_{\rb}}_{\mathbb{K}}.
\end{flalign}
To calculate the adjoint operator $\mc{S}_{\rb^o}'^*$, use \cref{lem:adjoint}\ref{b}, and proceed as follows 
\begin{flalign}\label{eq:d5}
\inn{\mc{Q}\mc{S}(\ub^o;\rb^o,\xb_0)}{\mc{S}'_{\rb^o}\hb_{\rb}}_{L^2(0,\tau;\ss )}&=\int_0^{\tau}\inn{\mc{Q}\xb^o(t)}{\int_0^t\mc{U}(t,s)(\mc{B}'_{\rb^o}\hb_{\rb})\ub^o(s)\, ds} dt\notag\\
&=\int_0^{\tau}\inn{\int_s^{\tau}\mc{U}^*(t,s)\mc{Q}\xb^o(t)\,dt}{(\mc{B}'_{\rb^o}\hb_{\rb})\ub^o(s)} ds\notag\\
&=\int_0^{\tau}\inn{\pb^o(s)}{(\mc{B}'_{\rb^o}\hb_{\rb})\ub^o(s)} ds.
\end{flalign}
For each $\ub\in \cs$,  $(\mc{B}'_{\rb^o}\hb_{\rb})u$ is an element of $\ss .$ This defines a bounded linear map from $\hb_{\rb}\in \mathbb K$ to $\ss .$
There exists a bounded linear operator $(\mc{B}'_{\rb^o}\ub)^*$: $\ss\to \mathbb{K}$ satisfying
\begin{equation}
\inn{(\mc{B}'_{\rb^o}\ub)^*\pb}{\hb_{\rb}}_\mathbb{K}=\inn{\pb}{(\mc{B}'_{\rb^o}\hb_{\rb})u}.
\end{equation}
Incorporate this into (\ref{eq:d5}) to obtain
\begin{align}
\inn{\mc{Q}\mc{S}(\ub^o;\rb^o,\xb_0)}{\mc{S}'_{\rb^o}\hb_{\rb}}_{L^2(0,\tau;\ss )}&=\int_0^{\tau} \inn{(\mc{B}'_{\rb^o}\ub^o(s))^*\pb^o(s)}{\hb_{\rb}}_\mathbb{K} ds\notag \\
&=\inn{\int_0^{\tau} (\mc{B}'_{\rb^o}\ub^o(s))^*\pb^o(s)\, ds}{\hb_{\rb}}_\mathbb{K}.\label{eq4}
\end{align}
This gives an explicit form of the adjoint operator $\mc{S}_{\rb^o}'^*$. Similarly, by Theorem 1.46 of \cite{hinze2008optimization}, inner product (\ref{eq3}) must be non-negative for any direction $\rb-\rb^o$ in $K_{ad}$ yielding (\ref{eq:optimality2}).
\end{proof}
{\begin{remark}Under assumptions \ref{A}\ref{as:A1} and \ref{C}, if there is an open set $U\times K\supset U_{ad}\times K_{ad}$ on which the control-to-state map $\mc S(\ub;\rb,\xb_0)$ is G\'ateaux differentiable in $(\ub,\rb)$, then, using \cite[Thm.146]{hinze2008optimization} and a proof identical to that of Theorem \ref{thm:characterizing}, it follows that  every local minimizer $(\ub^o,\rb^o)\in U_{ad}\times K_{ad}$ satisfies
\begin{subequations}\label{eq:optimality-ineq}
\begin{flalign}
\label{eq:optimality3}\inn{\ub^o+\mc{R}^{-1}\mc{B}^*(\rb^o)\pb^o}{\ub-\ub^o}_{L^2(0,\tau;\cs )}\ge 0, \quad &\forall \ub \in U_{ad},\\
\label{eq:optimality4}\inn{\int_0^{\tau} (\mc{B}'_{\rb^o}\ub^o(t))^*\pb^o(t)\, dt}{\rb-\rb^o}_{\as}\ge 0, \quad &\forall \rb\in K_{ad}.
\end{flalign}%
\end{subequations}
\end{remark}}

Together with the original PDE, \cref{thm:characterizing} provides the following system of equations characterizing an optimizer  $(\xb^o,\pb^o,\ub^o,\rb^o)$:
\begin{equation}\label{eq:optimizer}
\left\lbrace\begin{array}{ll}
\dot{\xb}^o(t)=\mc{A}\xb^o(t)+\F(\xb^o(t))+\mc{B}(\rb^o)\ub^o(t),& \xb^o(0)=\xb_0,\\[2mm]
\dot{\pb}^o(t)=-(\mc{A}^*+\F'^*_{\xb^o(t)})\pb^o(t)-\mc{Q}\xb^o(t),& \pb^o(\tau)=0,\\[2mm]
\ub^o(t)=-\mc{R}^{-1}\mc{B}^*(\rb^o)\pb^o(t),\\[2mm]
\int_0^{\tau} (\mc{B}'_{\rb^o}\ub^o(t))^*\pb^o(t)\, dt=0.
\end{array}\right.
\end{equation}
If the control space $\mathbb U$ and actuator design space $\mathbb K$ are separable Hilbert spaces, the optimizing control and actuator can be characterized further.
Let $\eb^\mathbb{K}_j$, $\eb^\cs_i$, and $\eb^\ss_k$ be orthonormal bases for $\mathbb{K}$, $\cs$, and $\ss$, respectively. Then there exists $\bm{b}_i ( \rb) \in \ss$, $\rb\in \mathbb K$ so that for any $\ub \in \mathbb U$, 
\begin{equation}\label{eq:B}
\mc B (\rb) \ub=\sum_{i=1}^{\infty}  \inn{\ub}{\eb^\cs_i}_{\mathbb U}\bm{b}_i (\rb).
\end{equation}
Since the operator $\mc{B}(\cdot)\ub:\mathbb{K}\to \ss$ is {G\^ateaux} differentiable with respect to $\rb$, each $\bm{b}_i(\cdot)$ is a {G\^ateaux} differentiable map from $\mathbb{K}$ to $\ss$. Denote the {G\^ateaux} derivative of $\bm{b}_i(\rb)$ at $\rb^o$ by $\bm{b}'_{i,\rb^o}:\mathbb{K}\to \ss$, then
\begin{equation}\label{eq:Br}
(\mc B'_{\rb^o}\rb)\ub=\sum_{i=1}^{\infty}\sum_{j=1}^{\infty}  \inn{\ub}{\eb^\cs_i}_{\mathbb U}\inn{\rb}{\eb^\mathbb{K}_j}_\mathbb{K} \bm{b}'_{i,\rb^o}\eb^\mathbb{K}_j.
\end{equation}
\begin{corollary}\label{corollary2}
Assume further that the input space $\mathbb U$ and actuator design space $\mathbb K$ are separable. Let  $\eb^\cs_i$, $\eb^\mathbb{K}_j$ and $\eb^\ss_k$ be orthonormal bases for $\mathbb{K}$, $\cs$, and $\ss$, respectively. Define  $\ub^o_j(t)$ and $\pb_k(t)$ as
\begin{subequations}\label{eq:components}
\begin{flalign}
\ub^o_j(t)&\coloneqq\inn{\ub^o(t)}{\eb^\cs_j}_\cs,\\
\pb^o_k(t)&\coloneqq\inn{\pb^o(t)}{\eb^\ss_k}.
\end{flalign}%
\end{subequations}%
The optimality conditions (\ref{eq:optimality}) {in the interior of $U_{ad}\times K_{ad}$} can be written as
\begin{subequations}\label{eq:optimality components}
\begin{flalign}
\ub^o_j(t)+\sum_{i=1}^{\infty}\sum_{k=1}^{\infty}\inn{\bm{b}_i(\rb^o)}{\eb^\ss_k} \inn{\mc R^{-1} \eb_i^\cs}{\eb_j^\cs}_\cs\pb_k^o(t)&=0,\quad \text{for each}\; j, \\
\sum_{i=1}^{\infty}\sum_{k=1}^{\infty}\inn{\bm{b}'_{i,\rb^o}\eb_j^\mathbb{K}}{\eb_k^\ss} \int_0^{\tau} \ub^o_i (s)\pb^o_k(s)\, ds&=0,\quad  \text{for each}\; j .
\end{flalign}%
\end{subequations}%
\end{corollary}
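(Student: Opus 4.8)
The plan is to substitute the basis expansions \eqref{eq:B} and \eqref{eq:Br} of $\mc B(\rb)$ and $\mc B'_{\rb^o}$ into the abstract optimality conditions \eqref{eq:optimality1} and \eqref{eq:optimality2}, and then read off coordinates against the orthonormal bases $\eb^\cs_i$, $\eb^\mathbb{K}_j$, $\eb^\ss_k$. Both identities in \cref{thm:characterizing} are equalities of elements of a Hilbert space ($\cs$ for the control condition, $\mathbb{K}$ for the actuator condition), so it suffices to project each onto the corresponding basis vector and expand every quantity in sight; no new estimates are needed beyond those already available.

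For the control condition I would first obtain a series form of the adjoint $\mc B^*(\rb^o)$. Pairing \eqref{eq:B} with an arbitrary $\pb\in\ss$ gives $\inn{\mc B(\rb^o)\ub}{\pb}=\sum_i\inn{\ub}{\eb^\cs_i}_\cs\inn{\bm{b}_i(\rb^o)}{\pb}$, so that $\mc B^*(\rb^o)\pb=\sum_i\inn{\bm{b}_i(\rb^o)}{\pb}\,\eb^\cs_i$. Taking $\pb=\pb^o(t)$ and inserting the expansion $\pb^o(t)=\sum_k\pb^o_k(t)\eb^\ss_k$ from \eqref{eq:components} yields $\inn{\bm{b}_i(\rb^o)}{\pb^o(t)}=\sum_k\inn{\bm{b}_i(\rb^o)}{\eb^\ss_k}\pb^o_k(t)$. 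Projecting \eqref{eq:optimality1} onto $\eb^\cs_j$ and using that $\mc R^{-1}$ is bounded and self-adjoint then reproduces, after rearrangement, the first component equation in \eqref{eq:optimality components}.

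For the actuator condition I would start from the defining relation of the adjoint $(\mc B'_{\rb^o}\ub)^*$, namely $\inn{(\mc B'_{\rb^o}\ub^o(t))^*\pb^o(t)}{\eb^\mathbb{K}_j}_\mathbb{K}=\inn{\pb^o(t)}{(\mc B'_{\rb^o}\eb^\mathbb{K}_j)\ub^o(t)}$. Substituting $\rb=\eb^\mathbb{K}_j$ into \eqref{eq:Br}, orthonormality of the $\eb^\mathbb{K}$ basis collapses the $j$-summation, and expanding $\ub^o(t)=\sum_i\ub^o_i(t)\eb^\cs_i$ and $\pb^o(t)=\sum_k\pb^o_k(t)\eb^\ss_k$ turns the pairing into a double sum over $i,k$ of $\inn{\bm{b}'_{i,\rb^o}\eb^\mathbb{K}_j}{\eb^\ss_k}\,\ub^o_i(t)\pb^o_k(t)$. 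Integrating over $[0,\tau]$, projecting \eqref{eq:optimality2} onto $\eb^\mathbb{K}_j$, and pulling the time-independent coefficients $\inn{\bm{b}'_{i,\rb^o}\eb^\mathbb{K}_j}{\eb^\ss_k}$ out of the integral gives the second component equation.

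The only steps that are not pure bookkeeping are the interchanges of the infinite series with the continuous inner products and with the time integral. Each exchange against an inner product is immediate from continuity of $\inn{\cdot}{\cdot}$ together with norm convergence of the orthonormal expansions. The interchange of the $i,k$ summation with $\int_0^\tau$ in the actuator condition is the main point to justify: since $\ub^o\in L^2(0,\tau;\cs)$ and $\pb^o\in C(0,\tau;\ss)\subset L^2(0,\tau;\ss)$, Parseval's identity together with the Cauchy--Schwarz inequality bounds the tails of the double sum by an integrable majorant, so dominated convergence legitimizes the swap. With these interchanges in hand, the two coordinate identities follow directly.
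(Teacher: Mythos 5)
Your proposal is correct and follows essentially the same route as the paper: derive the series representations $\mc{B}^*(\rb^o)\pb=\sum_i\inn{\bm{b}_i(\rb^o)}{\pb}\eb^\cs_i$ and $(\mc B'_{\rb^o}\ub^o)^*\pb=\sum_j\sum_i\inn{\ub^o}{\eb_i^\cs}_\cs\inn{\bm{b}'_{i,\rb^o}\eb_j^\mathbb{K}}{\pb}\eb^\mathbb{K}_j$ from the expansions \eqref{eq:B} and \eqref{eq:Br}, substitute them into \eqref{eq:optimality1} and \eqref{eq:optimality2}, and read off coordinates against the orthonormal bases. Your extra care in justifying the sum--integral and sum--inner-product interchanges (via boundedness of $\mc R^{-1}$, Parseval, and dominated convergence) is a refinement the paper leaves implicit, not a different argument.
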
	
\begin{proof}
For every $\pb\in \ss$, the element $\mc B^*(\rb^o)\pb\in \cs$ can by obtained by using (\ref{eq:B}), and doing the calculation
\begin{flalign}
\inn{\mc{B}^*(\rb^o)\pb}{\ub}_\cs&=\inn{\pb}{\mc{B}(\rb^o)\ub}\notag\\
&=\sum_{i=1}^{\infty}  \inn{\ub}{\eb^\cs_i}_{\mathbb U} \inn{\pb}{\bm{b}_i (\rb^o)}\notag\\
&=\inn{\sum_{i=1}^{\infty}\inn{\bm{b}_i (\rb^o)}{\pb}\eb^\cs_i}{\ub}_\cs.
\end{flalign}
This yields
\begin{equation}
\mc{B}^*(\rb^o)\pb=\sum_{i=1}^{\infty}\inn{\bm{b}_i (\rb^o)}{\pb}\eb^\cs_i.
\end{equation}
Similarly, using  (\ref{eq:Br}), for every $\ub\in \cs$, the operator $(\mc{B}'_{\rb^o}\ub)^*$ maps $\pb \in \ss $ to $\mathbb K$ as follows
\begin{flalign}
(\mc B'_{\rb^o}\ub^o)^*\pb&=\sum_{j=1}^{\infty}\sum_{i=1}^{\infty}\inn{\ub^o}{\eb_i^\cs}_\cs \inn{\bm{b}'_{i,\rb^o}\eb_j^\mathbb{K}}{\pb} \eb^\mathbb{K}_j.
\end{flalign}
Substituting these elements into the optimality conditions (\ref{eq:optimality}) and using (\ref{eq:components}) leads to (\ref{eq:optimality components}). 
\end{proof}

\section{Railway Track Model}
Railway tracks are rested on ballast which are known for exhibiting nonlinear viscoelastic behavior \cite{ansari2011}. If a track beam is made of a Kelvin-Voigt material, then the railway track model will be a semi-linear partial differential equation on $\xi\in [0,\ell]$ as follows:
\begin{flalign}
&\rho a \frac{\partial^2 w}{\partial t^2}+\frac{\partial}{\partial \xi^2}(EI\frac{\partial^2 w}{\partial \xi^2}+C_d\frac{\partial ^3 w}{\partial \xi^2 \partial t})+\mu \frac{\partial w}{\partial t}+kw+\alpha w^3=b(\xi;r)u(t),\label{eq:pde parabolic}\\
&w(\xi,0)=w_0(\xi), \quad \frac{\partial w}{\partial t}(\xi,0)=v_0(\xi),\notag\\
&w(0,t)=w(\ell,t)=0,\notag\\
&EI\frac{\partial ^2w}{\partial \xi^2}(0,t)+C_d\frac{\partial ^3w}{\partial \xi^2\partial t}(0,t)=EI\frac{\partial^2 w}{\partial \xi^2}(\ell,t)+C_d\frac{\partial ^3w}{\partial \xi^2\partial t}(\ell,t)=0, \label{eq:KV}\notag
\end{flalign}
where the positive constants $E$, $I$, $\rho$, $a$, and $\ell$ are the modulus of elasticity, second moment of inertia, density of the beam, cross-sectional area, and length of the beam, respectively. The linear and nonlinear parts of the foundation elasticity correspond to the coefficients $k$ and $\alpha$, respectively. The constant $\mu\ge 0$ is the damping coefficient of the foundation, and $C_d\ge 0$ is the coefficient of Kelvin-Voigt damping in the beam.
{The track deflection is controlled by an external force  $u(t)$;  $u(t)$  will  be
assumed to be a scalar input in order to simplify the exposition. The shape influence function $b(\xi;r)$ is a continuous function over $[0,\ell]$ parametrized by the parameter $r$ that describes its dependence on actuator location. For example, as shown in \Cref{beam-schematic}, the control force is typically localized at some point $r$ and $b(\xi;r)$ models the distribution of the force $u(t)$ along the beam. The
function $b(\xi;  r) $  is assumed continuously differentiable with respect to $r$ over $\mathbb{R}$ (assumptions \ref{B}\ref{as:B2} and \ref{C}\ref{as:C2}); a suitable function for the case of actuator location is illustrated in \Cref{beam-schematic}.}

\begin{figure}[H]
\centering
\includegraphics[width=0.7\linewidth]{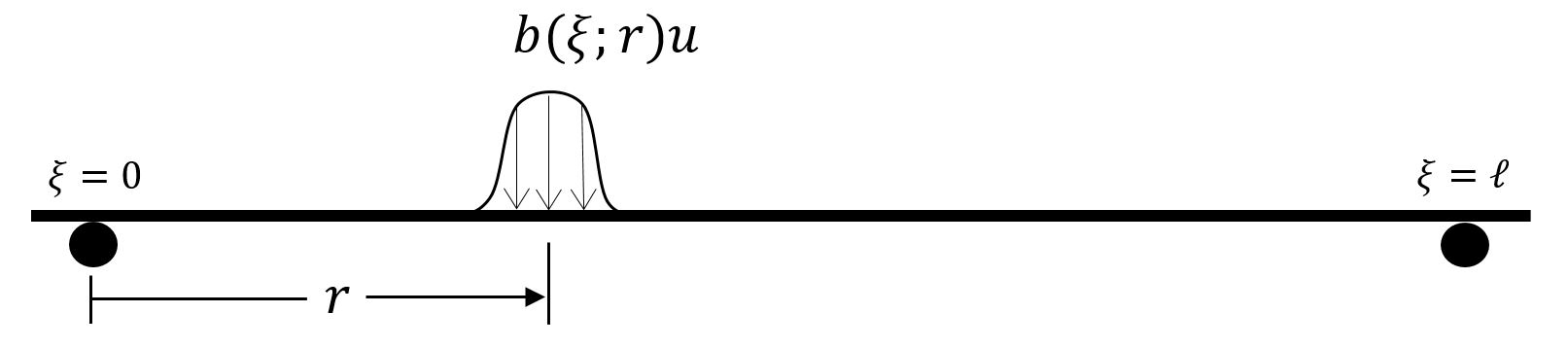}
\caption{Schematic of an actuator on the railway track beam.}
\label{beam-schematic}
\end{figure}

Define the closed self-adjoint positive operator $\mc{A}_0$ on $L^2(0,\ell)$ as:
\begin{flalign}
&\mc{A}_0w\coloneqq w_{\xi\xi\xi\xi},\notag \\
&D(\mc{A}_0)\coloneqq\left\lbrace w\in H^4(0,\ell )| \, w(0)=w(\ell)=0, \; w_{\xi\xi}(0)=w_{\xi\xi}(\ell)=0 \right\rbrace,
\end{flalign}
where {the subscript $\cdot_{\xi}$} denote the derivative with respect to the spatial variable. As a result, the state operator associated with the Kelvin-Voigt beam is 
\begin{equation}
\mc{A}_{\scriptscriptstyle KV}(w,v)\coloneqq\left(v,-\frac{1}{\rho a}\mc{A}_0(EIw+C_dv)\right),
\end{equation}
which is defined on the state space $\ss=H^2(0,\ell )\cap H_0^1(0,\ell )\times L^2(0,\ell )$ equipped with the norm
\begin{equation}
\| (w,v) \|^2=\int_0^{\ell} EIw_{\xi\xi}^2+kw^2+\rho a v^2 \, d\xi \label{eq: norm}.
\end{equation}
Accordingly, the domain of the state operator is 
\begin{equation}
D(\mc{A}_{\scriptscriptstyle KV})\coloneqq\left\lbrace(w,v)\in \ss| \, v\in H^2(0,\ell )\cap H_0^1(0,\ell ), \; EIw+C_dv\in D(\mc{A}_0) \right\rbrace.
\end{equation}
The underlying state space $\ss$ is separable since the spaces $H^2(0,\ell )\cap H_0^1(0,\ell )$ and $L^2(0,\ell )$ are separable. Furthermore, define the linear operators $\mc{K}$, $\mc{B}(r)$, and the nonlinear operator $\F(\cdot)$ as
\begin{flalign}
&\mc{K}(w,v)\coloneqq(0,-\frac{1}{\rho a}(\mu v + kw)),\\
&\mc{B}(r)u\coloneqq(0,\frac{1}{\rho a}b(\xi;r)u),\\
&\F(w,v)\coloneqq(0,-\frac{\alpha}{\rho a} w^3).\label{eq:cubic nonlinearity}
\end{flalign} 
The operator $\mc{K}$ is a bounded linear operator on $\ss$. For each $r$, operator $\mc{B}(r)$ is also a bounded operator that maps an input $u\in \mathbb{R}$ to the state space $\ss$. Since the space $H^2(0,\ell)$ is contained in the space of continuous functions over $[0,\ell]$, the the nonlinear term $w^3$ is in $L^2(0,\ell)$. Thus, the nonlinear operator $\F(\cdot)$ is well-defined on $\ss$. Lastly, define the operator $\mc{A}=\mc{A}_{\scriptscriptstyle KV}+\mc{K}$, with the same domain as $\mc{A}_{\scriptscriptstyle KV}$. With these definition and by setting $\xb=(w,v)$, the state space representation of the railway model (\ref{eq:KV}) is
\begin{equation}
\dot{\xb}(t)=\mc{A}\xb(t)+\F(\xb(t))+\mc{B}(r)u(t), \quad \xb(0)=\xb_0\in D(\mc{A}) \label{IVP2}.
\end{equation}
 It is straightforward to show that the operator $\mc{A}_0$ is closed, densely-defined, self-adjoint, and positive; it also has a compact resolvent. As a result, the operator $\mc{A}_{\scriptscriptstyle KV}$ will be a special case of the operator $\mathscr{A}_B$ in \cite{chen1989proof} with $\alpha=1$. According to Theorem 1.1 in \cite{chen1989proof}, such operators are generator of an analytic semigroup (also see \cite[Sec.~3]{banks1987unified} for a different approach). Furthermore, the operator $\mc{A}_{\scriptscriptstyle KV}+\mc{K}$ is a bounded perturbation of the operator $\mc{A}_{\scriptscriptstyle KV}$. By Corollary 3.2.2 in \cite{pazy}, $\mc{A}_{\scriptscriptstyle KV}+\mc{K}$ also generates an analytic semigroup. 

The railway track model in \cite{ansari2011} neglects the Kelvin-Voigt damping in the beam (i.e. $C_d=0$), and only includes Kelvin-Voigt damping in the ballast. In this case, the semigroup generated by $\mc{A}$ is not analytic. 
The results of this paper hold true for both models.

To guarantee the existence of a unique solution to the PDE (\ref{eq:pde parabolic}), the nonlinear operator $\F(\cdot)$ needs to fall into assumption \ref{A}\ref{as:A2}, \ref{B}\ref{as:B1}, \ref{C}\ref{as:C1}, and \ref{C}\ref{as:C2}. {The following result is due to Simon \cite[Thm. 3]{simon1986compact}, and will be used to check assumption \ref{B}\ref{as:B1}.
\begin{theorem}\cite[Thm. 3]{simon1986compact}\label{thm-simon}
Let $\ss$ and $\mathbb{Y}$ be Banach spaces, and $\mathbb{Y}\hookrightarrow \ss$ with compact embedding. Assume $X \subset L^p(0,\tau; \ss)$ where $1\le p \le \infty$, and
\begin{gather}
X \text{ is bounded in } L^1_{loc}(0,\tau;\mathbb{Y}),\\
\int_0^{\tau-h}\norm{\xb(t+h)-\xb(t)}{\ss}^pdt\to 0 \text{ as } h\to 0 \text{ uniformly for }\xb\in X.
\end{gather}
Then, $X$ is relatively compact in $L^p(0,\tau; \ss)$ (and in $C(0,\tau; \ss)$ if $p =\infty$).
\end{theorem}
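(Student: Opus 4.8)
The plan is to establish relative compactness by proving that $X$ is totally bounded in $L^p(0,\tau;\ss)$; completeness of this space then upgrades total boundedness to relative compactness. The argument is the Banach-valued Fr\'echet--Kolmogorov (Kolmogorov--Riesz) scheme, in which the compact embedding $\Y\hookrightarrow\ss$ plays the role that the tightness/decay condition plays in the scalar theory. Concretely, I would first record the consequence of the boundedness hypothesis that for every $0<t_1<t_2<\tau$ the set of integrals $\{\int_{t_1}^{t_2}\xb(s)\,ds:\xb\in X\}$ is relatively compact in $\ss$: these integrals are bounded in $\Y$ by the $L^1_{loc}(0,\tau;\Y)$-bound, and the compact embedding converts $\Y$-boundedness into $\ss$-precompactness. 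The device that couples this with the time-regularity hypothesis is the averaging operator
\begin{equation}
(\mathcal{M}_h\xb)(t)\coloneqq\frac{1}{h}\int_t^{t+h}\xb(s)\,ds,\qquad t\in[0,\tau-h],\ h>0.\notag
\end{equation}

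The proof then rests on two facts. \emph{(a) Uniform approximation.} Writing $(\mathcal{M}_h\xb)(t)-\xb(t)=\tfrac1h\int_0^h(\xb(t+r)-\xb(t))\,dr$ and applying Minkowski's integral inequality gives
\begin{equation}
\norm{\mathcal{M}_h\xb-\xb}{L^p(0,\tau-h;\ss)}\le \sup_{0\le r\le h}\norm{\xb(\cdot+r)-\xb(\cdot)}{L^p(0,\tau-h;\ss)},\notag
\end{equation}
whose right-hand side tends to $0$ as $h\to0$ \emph{uniformly over} $X$ by the equicontinuity hypothesis. \emph{(b) Compactness for fixed $h$.} On any compact subinterval $[a,b]\subset(0,\tau)$ the values $(\mathcal{M}_h\xb)(t)$ lie in a single bounded subset of $\Y$, hence in a fixed relatively compact subset of $\ss$ by the compact embedding; moreover $\mathcal{M}_h\xb$ is absolutely continuous in $t$ with derivative $\tfrac1h(\xb(\cdot+h)-\xb(\cdot))$, so a H\"older estimate in time yields, uniformly over $X$, a modulus of continuity of the form $\norm{(\mathcal{M}_h\xb)(t')-(\mathcal{M}_h\xb)(t)}{\ss}\le c_h\,|t'-t|^{1-1/p}$. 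Arzel\`a--Ascoli then makes $\{\mathcal{M}_h\xb:\xb\in X\}$ relatively compact in $C([a,b];\ss)$, hence in $L^p(a,b;\ss)$.

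To conclude, I would combine (a) and (b) with an $\varepsilon/2$-net argument: given $\varepsilon>0$, use (a) together with control of the short boundary layers $[0,a]\cup[b,\tau]$ (see below) to choose $h$ and $[a,b]\subset(0,\tau)$ so that $\sup_{\xb\in X}\norm{\mathcal{M}_h\xb-\xb}{L^p(0,\tau;\ss)}<\varepsilon/2$; then the set $\{\mathcal{M}_h\xb\}$, relatively compact by (b), admits a finite $\varepsilon/2$-net, which serves as a finite $\varepsilon$-net for $X$. Hence $X$ is totally bounded, and therefore relatively compact in $L^p(0,\tau;\ss)$. The case $p=\infty$ runs identically with all $L^p$-in-time norms replaced by sup-norms, giving relative compactness in $C(0,\tau;\ss)$.

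The main obstacle is reconciling the \emph{local} hypothesis---boundedness only in $L^1_{loc}(0,\tau;\Y)$---with the \emph{global} conclusion in $L^p(0,\tau;\ss)$. Because the $\Y$-bound, and hence the compact-embedding step, are available only on compact subintervals, the Arzel\`a--Ascoli argument must be localized to $[a,b]\subset(0,\tau)$, and one must then show separately that the thin boundary layers carry arbitrarily little $L^p(\ss)$-mass \emph{uniformly over} $X$. It is this uniformity, rather than any single pointwise estimate, that is delicate, and it is exactly what the equicontinuity hypothesis together with membership in $L^p(0,\tau;\ss)$ supplies. A secondary subtlety is the endpoint $p=1$, where the H\"older modulus of continuity in (b) degenerates and equicontinuity must instead be drawn from uniform integrability of the averaged derivatives.
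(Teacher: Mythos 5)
First, a point of reference: the paper does not prove this statement at all --- it is quoted verbatim from Simon \cite[Thm.~3]{simon1986compact} and used as a black box to verify assumption \ref{B}\ref{as:B1} in the examples. So your proof cannot be compared against a proof in the paper, only against Simon's original one. Your scheme is the standard Banach-valued Fr\'echet--Kolmogorov argument and is structurally the right one; it is also close in spirit to Simon's. The main difference is that Simon factors the result through his Theorem 1 (a set is relatively compact in $L^p(0,\tau;\ss)$ if and only if the interval integrals $\int_{t_1}^{t_2}\xb\,dt$ are relatively compact in $\ss$ and the translates are $L^p$-equicontinuous), and the sufficiency half there is proved by approximating $\xb$ with \emph{piecewise-constant} functions built from the interval averages, whose values lie in compact sets; your route replaces this discretization with the continuous mollifier $\mathcal{M}_h$ plus Arzel\`a--Ascoli on compact subintervals. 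Both are legitimate; the step-function route has the advantage of treating all $1\le p\le\infty$ uniformly, which matters below.

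Two steps in your write-up are flagged but not actually closed, and one of them is stated with the wrong ingredients. (i) Boundary layers: you correctly identify that the local hypothesis (boundedness only in $L^1_{loc}(0,\tau;\Y)$) forces the Arzel\`a--Ascoli step onto $[a,b]\subset(0,\tau)$, and that one must show $\sup_{\xb\in X}\norm{\xb}{L^p(0,a;\ss)}+\norm{\xb}{L^p(b,\tau;\ss)}$ can be made small. But this does \emph{not} follow from ``equicontinuity plus membership in $L^p(0,\tau;\ss)$'': membership gives smallness of the tails for each fixed $\xb$, with no uniformity over $X$. The correct argument writes, for $t$ near $0$, $\xb(t)=\frac{2}{h_0}\int_{h_0/2}^{h_0}\xb(t+h)\,dh-\frac{2}{h_0}\int_{h_0/2}^{h_0}\bigl(\xb(t+h)-\xb(t)\bigr)dh$; the first term is uniformly bounded in $\Y$ (hence in $\ss$) by the $L^1_{loc}(\Y)$-bound, so its $L^p(0,a)$-norm is $O(a^{1/p})$, while the second is controlled uniformly by the equicontinuity hypothesis via Minkowski's integral inequality. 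So the local $\Y$-bound is an essential ingredient here, not just the two you name. (ii) The case $p=1$: you note the H\"older modulus $|t'-t|^{1-1/p}$ degenerates, but the proposed repair --- uniform integrability of the averaged derivatives --- is not a consequence of the hypotheses (equicontinuity of translates for $p=1$ gives smallness of $\int_0^{\tau-h}\normm{\xb(t+h)-\xb(t)}\,dt$ as $h\to0$, not uniform integrability at fixed $h$). The clean fixes are either a second mollification (the family $\mathcal{M}_{h'}\mathcal{M}_{h}\xb$ is uniformly Lipschitz on $[a,b]$ with constant of order $\omega(h')/(hh')$, so Arzel\`a--Ascoli applies) or Simon's step-function discretization, which never needs a pointwise modulus of continuity. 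With those two repairs your argument is complete for all $1\le p\le\infty$.
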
}
\begin{lemma} \label{lem:frechet F}The operator $\F(\cdot)$
\begin{enumerate} 
\item is continuously Fr\'echet differentiable on $\ss$; {the Fr\'echet derivative of this operator at $\xb=(w,v)$ maps every $\pb=(f,g)$ to $\F'_{\xb} \pb=(0,-3\alpha w^2f/\rho a)$,} 
\item {the mapping $\xb\mapsto \F'_{\xb}$ is bounded, and}
\item $\F(\cdot)$ satisfies assumption \ref{B}\ref{as:B1}.
\end{enumerate}
\end{lemma}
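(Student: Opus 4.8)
The plan is to handle the three assertions in turn, the third (verification of assumption \ref{B}\ref{as:B1}) being the substantive one. Throughout I would exploit that $\F$ is nontrivial only in its second coordinate, through the scalar cubic map $w\mapsto w^3$, and that on the interval $(0,\ell)$ one has the compact Sobolev embedding $H^2(0,\ell)\cap H^1_0(0,\ell)\hookrightarrow C[0,\ell]$.

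For \emph{continuous Fréchet differentiability} I would start from the pointwise identity $(w+f)^3-w^3-3w^2f=3wf^2+f^3$ and estimate its $L^2(0,\ell)$-norm. The embedding gives $\norm{w}{\infty}\le c\norm{w}{H^2}$ and $\norm{f}{\infty}\le c\norm{f}{H^2}$, and the deflection part of the energy norm dominates $\norm{\cdot}{H^2}$; hence the remainder is $O(\normm{\pb}^2)=o(\normm{\pb})$, which identifies the derivative as $\pb=(f,g)\mapsto\F'_{\xb}\pb=(0,-3\alpha w^2f/\rho a)$. Continuity of $\xb\mapsto\F'_{\xb}$ in the operator-norm topology follows from the factorization $\norm{(w_n^2-w^2)f}{L^2}\le\norm{w_n+w}{\infty}\norm{w_n-w}{\infty}\norm{f}{L^2}$, which the embedding bounds by $c\normm{\xb_n-\xb}\,\normm{\pb}$. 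The same estimate with $w_n=w$ gives $\norm{\F'_{\xb}}{\mc L(\ss)}\le c\,\norm{w}{\infty}^2\le c'\normm{\xb}^2$, so bounded sets in $\ss$ are mapped to bounded sets in $\mc L(\ss)$, which is the second assertion.

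For assumption \ref{B}\ref{as:B1}, write $\xb_n=(w_n,v_n)$; the hypotheses give $\{w_n\}$ bounded in $C(0,\tau;H^2\cap H^1_0)$, so $\{\F(\xb_n)\}=\{(0,-\alpha w_n^3/\rho a)\}$ is bounded in the reflexive space $L^2(0,\tau;\ss)$ and it suffices to show that every weak limit of a subsequence equals $\F(\xb)$. My plan is to upgrade the weak convergence of the deflections to a strong one: applying \Cref{thm-simon} to $\{w_n\}$, with $\Y=H^2\cap H^1_0$ compactly embedded in $L^2(0,\ell)$ by Rellich's theorem (the $L^1_{loc}$-boundedness being immediate from the $C(0,\tau;\ss)$ bound), I would extract a subsequence with $w_n\to w$ strongly in $L^2(0,\tau;L^2(0,\ell))$, the limit being identified with the first coordinate of $\xb$ through the weak $L^2(0,\tau;\ss)$ convergence. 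Strong convergence gives convergence a.e.\ in $(\xi,t)$ along a further subsequence, and since $\{w_n\}$ is uniformly bounded in $C[0,\ell]$ the bounded convergence theorem yields $w_n^3\to w^3$ in $L^2(0,\tau;L^2(0,\ell))$; hence every weak limit is $\F(\xb)$, and the usual subsequence argument promotes this to the whole sequence.

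The main obstacle is the compactness step, specifically the time-translation hypothesis $\int_0^{\tau-h}\norm{w_n(t+h)-w_n(t)}{L^2}^2\,dt\to0$ uniformly in $n$ required by \Cref{thm-simon}. This equicontinuity in time does not follow from boundedness in $C(0,\tau;\ss)$ alone, and I would establish it by exploiting the mild-solution representation (\ref{eq:mild solution}) that generates the admissible trajectories: the strong continuity of $\T(t)$ controls the term $\mc{T}(t)\xb_0$, while the two convolution terms are estimated through the uniform bounds on $\F(\xb_n)$ and on the inputs, the decisive structural fact being the compact embedding of the deformation space into the space of deformation rates.
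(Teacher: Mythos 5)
Parts (1) and (2) of your proposal are correct and are essentially the paper's own computation: expand $(w+f)^3-w^3-3w^2f=3wf^2+f^3$, estimate the remainder in $L^2(0,\ell)$ through Sobolev embedding and H\"older, and read off the quadratic bound $\norm{\F'_{\xb}}{\mc{L}(\ss)}\le c\normm{\xb}^2$. Your factorization $w_2^2-w_1^2=(w_2+w_1)(w_2-w_1)$ for operator-norm continuity of $\xb\mapsto\F'_{\xb}$ is in fact tidier than the paper's corresponding step, which estimates $\F'_{\xb_2-\xb_1}$ rather than the actual difference $\F'_{\xb_2}-\F'_{\xb_1}$.

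The gap is in part (3), and it sits exactly where you located it --- but your repair is not admissible for the statement being proved. Assumption \ref{B}\ref{as:B1} quantifies over \emph{arbitrary} bounded sequences $\xb_n$ in $C(0,\tau;\ss)$ with $\xb_n\rightharpoonup\xb$ in $L^p(0,\tau;\ss)$; such a sequence carries no input, no initial datum, and no relation to (\ref{IVP}), so the mild-solution representation (\ref{eq:mild solution}) that you invoke to obtain the uniform time-translation estimate required by \Cref{thm-simon} is simply unavailable. What your plan establishes is weak sequential continuity of $\F$ along mild solutions of (\ref{IVP}) --- which is all that \Cref{thm:existence optimizer} actually uses --- but that is a different, weaker statement than \Cref{lem:frechet F}(3). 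Moreover, your suspicion that time-equicontinuity cannot be extracted from boundedness alone is correct, and the defect cannot be circumvented by any argument: take $0\neq\psi\in H^2(0,\ell)\cap H^1_0(0,\ell)$ and $\xb_n(t)=\big((1+\sin(nt))\psi,\,0\big)$. This sequence is bounded in $C(0,\tau;\ss)$ and converges weakly in $L^2(0,\tau;\ss)$ to $\xb=(\psi,0)$, yet $w_n^3=(1+\sin(nt))^3\psi^3\rightharpoonup\tfrac{5}{2}\psi^3\neq\psi^3$ in $L^2(0,\tau;L^2(0,\ell))$, because $\sin^2(nt)\rightharpoonup\tfrac{1}{2}$ while $\sin(nt)$ and $\sin^3(nt)$ converge weakly to zero. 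Hence $\F(\xb_n)\not\rightharpoonup\F(\xb)$: the cubic nonlinearity does not satisfy assumption \ref{B}\ref{as:B1} as literally stated, so no proof can close the gap you left open. (The same example exposes the corresponding step in the paper's own proof, where boundedness in $C(0,\tau;L^6(0,\ell))$ is asserted to ensure the translation condition of \Cref{thm-simon}; it does not.) The honest fix is the one you implicitly performed: either restrict assumption \ref{B}\ref{as:B1} to sequences of mild solutions of (\ref{IVP}), for which the representation formula does yield the required equicontinuity, or add the uniform time-translation property to its hypotheses; under either reformulation your route --- \Cref{thm-simon} with $H^2(0,\ell)\cap H^1_0(0,\ell)$ compactly embedded in $L^2(0,\ell)$, a.e.\ convergence plus bounded convergence for the cube, and uniqueness of weak limits --- goes through.
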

\begin{proof}
If $\F'_{\xb}$ is the Fr\'echet derivative of $\F(\cdot)$ at $\xb$, this operator must satisfy
\begin{equation}
\lim_{\pb\to 0}\frac{\|\F(\xb+\pb)-\F(\xb)-\F'_{\xb}\pb\|}{\|\pb\|}=0.
\end{equation}
Recall the definition of the operator $\F$ and that of norm on the space $\ss$, above limit simplifies to
\begin{equation}
\lim_{\|{f}\|_{H^2}\to 0}\frac{\|{f}^3+3{f}^2{w}\|_{L^2}}{\|{f}\|_{H^2}}=0. \label{eq: limit}
\end{equation}
Notice that functions $f$ and $w$ are in $H^2(0,\ell )$, and thus, continuous on $[0,\ell]$. Use triangle inequality, and H\"older's inequality to obtain
\begin{flalign}
\|{f}^3+3{f}^2{w}\|_{L^2} &\le \|{f}^3\|_{L^2}+\|3{f}^2{w}\|_{L^2}\notag\\
&\le \|{f}\|_{L^6}^3+3\|{f}\|_{L^8}^2\|{w}\|_{L^4}.
\end{flalign}
Apply the Sobolev embedding result $H^2(0,\ell )\hookrightarrow L^p(0,\ell )$ and let $c_p$ be the embedding constant
\begin{flalign}
\|{f}^3+3{f}^2{w}\|_{L^2} &\le c_6^3\|{f}\|_{H^2}^3+3c_8^2c_4\|{f}\|_{H^2}^2\|{w}\|_{H^2},
\end{flalign}
As a result, the expression in (\ref{eq: limit}) is bounded above according to
\begin{equation}
\frac{\|{f}^3+3{f}^2{w}\|_{L^2}}{\|{f}\|_{H^2}} \le c_6^3\|f\|^2_{H^2}+3c_8^2c_4\|w\|_{H^2}\|f\|_{H^2}.
\end{equation}
This shows that the limit in (\ref{eq: limit}) holds, and the operator $\F(\cdot)$ is indeed Fr\'echet differentiable.

Furthermore, select $\xb_1=(w_1,v_1)$, $\xb_2=(w_2,v_2)$, and $\pb=(f,g)$ as generic elements of $\ss$. The Fr\'echet derivative of $\F(\cdot)$ at $\xb_2-\xb_1$ is $$\F'_{\xb_2-\xb_1}\pb=(0,-\frac{3\alpha}{\rho a} (w_2-w_1)^2f).$$%
Take the norm of $\F'_{\xb_2-\xb_1}\pb$, and use H\"older's inequality to obtain
\begin{flalign}
\normm{\F'_{\xb_2-\xb_1}\pb}&=\frac{3\alpha}{\sqrt{\rho a}}\left(\int_0^{\ell} (w_2(\xi)-w_1(\xi))^4f^2(\xi)\,d\xi\right)^\frac{1}{2}\notag\\
&\le \frac{3\alpha}{\sqrt{\rho a}}\norm{w_2-w_1}{L^8}^2\norm{f}{L^4}.
\end{flalign}
Applying the Sobolev embedding result $H^2(0,\ell )\hookrightarrow L^p(0,\ell )$ yields
\begin{flalign}
\normm{\F'_{\xb_2-\xb_1}\pb}&\le \frac{3\alpha}{\sqrt{\rho a}}c_8^2c_4\norm{w_2-w_1}{H^2}^2\norm{f}{H^2}\notag\\
&\le \frac{3\alpha}{\sqrt{\rho a}}c_8^2c_4 \normm{\xb_2-\xb_1}^2\normm{\pb}.\label{eq15}
\end{flalign}
The last inequality indicates that the operator norm of $\F'_{\xb}$ continuously depends on $\xb$. 

{Inequality (\ref{eq15}) also yields 
\begin{equation}
\norm{\F'_{\xb}}{\mc L(\ss)}\le \frac{3\alpha}{\sqrt{\rho a}}c_8^2c_4 \normm{\xb}^2.
\end{equation}
This shows that the mapping $\xb\mapsto \F'_{\xb}$ is bounded.}

{To show that the nonlinear operator $\F(\cdot)$ satisfies assumption \ref{B}\ref{as:B1}, consider a bounded sequence $\xb_n(t)=(w_n(t),v_n(t))$ in $C(0,\tau;\ss)$ weakly converging to some element $\xb(t)=(w(t),v(t))$ in $L^p(0,\tau;\ss)$. 
It is shown that the sequence $w_n(t)$ satisfies conditions of \Cref{thm-simon}. The sequence $w_n(t)$ is by assumption bounded in $C(0,\tau;H^2(0,\ell)\cap H^1_0(0,\ell))$, and so in $C(0,\tau;L^6(0,\ell))$. This ensures that for all $p\in[1,\infty)$
\begin{equation}
\int_0^{\tau-h}\norm{w_n(t+h)-w_n(t)}{L^6(0,\ell)}^pdt\to 0 \text{ as } h\to 0 \text{ uniformly for all }n.
\end{equation}
Also, the space $H^2(0,\ell)\cap H^1_0(0,\ell)$ is compactly embedded in $L^6(0,\ell)$ by Rellich-Kondrachov compact embedding theorem \cite[Ch.~6]{adams2003sobolev}.
According to \Cref{thm-simon}, $w_n(t)$ has a strongly convergent subsequence in $L^p(0,\tau;L^6(0,\ell))$. Recall that $w_n(t)$ weakly converges to $w(t)$ in $L^2(0,\tau;H^2(0,\ell)\cap H^1_0(0,\ell))$ as well. A weak limit is unique; thus, $w_n\to w$ in $L^p(0,\tau;L^6(0,\ell))$. This further implies that $w_n^3\to w^3$ in $L^p(0,\tau;L^2(0,\ell))$. The nonlinear operator $\F(\cdot)$ maps $\xb_n(t)$ to
\begin{equation}
\F(w_n(t),v_n(t))=(0,\frac{\alpha}{\rho a}w_n^3(t)).
\end{equation}
Thus, the sequence $\F(w_n(t),v_n(t))$ strongly (and so weakly) converges to $\F(w(t),v(t))$ in $L^2(0,\tau;\ss)$.}
\end{proof}
The previous lemma ensures that the nonlinear operator $\F(\cdot)$ satisfies  assumption \ref{A}\ref{as:A2}. By \cref{thm:mild}, for control inputs $u\in L^p(0,\tau)$, $1<p<\infty$, the existence of a unique local mild solution is guaranteed.  

To address the optimization problem \ref{eq:optimal problem} for the railway track model, assumption \ref{B} and \ref{C} need to be satisfied. In \cref{lem:weak lower semi}, it was shown that assumption \ref{B}\ref{as:C3 B2} will hold for the particular choice of the cost function in assumption \ref{C}\ref{as:C3}.  As a result, the existence of an optimal pair $(u^o,r^o)$ together with an optimal trajectory $\xb^o$ follows from \cref{thm:existence optimizer}.

Accordingly, using \cref{thm:characterizing}, the optimal pair $(u^o,r^o)$ satisfies the equation (\ref{eq:optimizer}). In order to characterize the optimizers (\ref{eq:optimizer}) some adjoint operators need to be calculated. Calculation of the operator $\mc{A}^*$ is straightforward; it is 
\begin{equation}
\mc{A}^*(f,g)=\left(-g,\frac{1}{\rho a}\mc{A}_0(EI f-C_d g)+\frac{k}{\rho a}f-\frac{\mu}{\rho a}g\right), \
\end{equation}
for all  $(f,g)$ in the domain
\begin{equation}
D(\mc{A}^*)=\left\lbrace(f,g)\in \ss| \, g\in H^2(0,\ell )\cap H_0^1(0,\ell ), \; EIf-C_dg\in D(\mc{A}_0) \right\rbrace.
\end{equation}
Let $\xb^o(t)=(w^o,v^o)$ be the optimal trajectory evaluated at time $t\in [0,\tau]$. To calculate the adjoint of the operator $\F'_{\xb^o(t)}$ for every $t\in [0,\tau]$ on $\ss$, take the inner product $\F'_{\xb^o(t)}(w,v)$ with $(f,g) \in \ss$; that is
\begin{equation}\label{eq:adjoint}
\inn{\F'_{\xb^o(t)}(w,v)}{(f,g)}=\int_0^\ell -{3\alpha}(w^o(\xi))^2w(\xi)g(\xi)\, d\xi.
\end{equation}
For any $g \in L^2 (0,\ell)$, consider the function $h \in H^2 (0, \ell) \cap H^1_0 (0,\ell) $ satisfying the differential equation
\begin{flalign}\label{eq:phi}
&EI h_{\xi\xi\xi\xi}(\xi)+k h(\xi)=-{3\alpha}(w^o(\xi))^2g(\xi), \notag\\
&h(0)=h(\ell)=0,\notag \\
&h_{\xi\xi}(0)=h_{\xi\xi}(\ell)=0.
\end{flalign} 
An explicit solution $ h(\xi)$ to (\ref{eq:phi}) can be calculated using a Green's function:
\begin{flalign}
h(\xi)&=-{3\alpha}\int_0^{\ell} G(\xi,{\eta})(w^o({\eta}))^2g({\eta})\, d{\eta},\notag \\
G(\xi,{\eta})&=\frac{1}{6\ell}\left\{ \begin{array}{ll}
	(2\ell^2{\eta}-3\ell {\eta}^2+{\eta}^3)\xi+({\eta}-\ell)\xi^3,\quad \xi\le {\eta}\\
	({\eta}^3-\ell^2 {\eta})\xi+{\eta}\xi^3,\quad \xi>{\eta} \, . \\
	\end{array} \right.
\end{flalign}
With this calculation, for any $(w,v) \in \ss,$ 
\begin{flalign}
\inn{(w,v)}{(h,0)}&=\int_0^\ell EI w_{\xi\xi}(\xi)h_{\xi \xi}(\xi)+k w(\xi) h(\xi) \, d\xi \notag\\
&=EI[h_{\xi\xi}w_{\xi}]_0^\ell-EI[h_{\xi\xi\xi}w]_0^\ell+\int_0^\ell (EIh_{\xi\xi\xi\xi}(\xi)+kh(\xi))w(\xi)\, d\xi \notag\\
&=\int_0^\ell -{3\alpha}(w^o(\xi))^2w(\xi)g(\xi)\, d\xi.
\end{flalign}
Comparing this equation to (\ref{eq:adjoint}); the adjoint of $\F'_{\xb^o(t)}$ is defined by
\begin{equation}
\F'^*_{\xb^o(t)}(f,g)=(h,0).
\end{equation}

The adjoint of the operator $\mc{B}(r)$ for every $(f,g)\in\ss$ is
\begin{equation}
\mc{B}^*(r)(f,g)=\int_0^{\ell}b(\xi;r)g(\xi)\, d\xi.
\end{equation}
{Also, denote $b_{r}(\xi;r)$ to be the derivative of $b(\xi;r)$ with respect to $r$ and let $\pb^o(t)=(f,g)$ at time $t\in [0,\tau]$. Use \Cref{corollary2} to find
\begin{equation}
(\B'_ru)^*\pb^o(t)=u\int_0^\ell b_r(\xi;r)g(\xi) d\xi, \quad \forall (f,g)\in \ss.        
\end{equation}}

Furthermore, let $q_1\in C^2([0,\ell])$ and $q_2\in C([0,\ell])$ be some non-negative functions. Set $\mc{Q}(w,v)=(q_1 w,q_2 v)$ and $\mc{R}=1$ in the cost function of assumption \ref{C}\ref{as:C3}. 

In conclusion, {assuming that $(u^o,r^o)$ is in the interior of $U_{ad}\times K_{ad}$}, the following set of equations yields an optimizer for every initial condition $\xb_0=(w_0,v_0)\in \ss$:
\begin{flalign}\label{eq:optimizer for railway}
&\begin{cases}\tag{IVP}
\rho a w^o_{tt}+(EI w^o_{\xi\xi}+C_dw^o_{t\xi\xi})_{\xi\xi}+\mu w^o_t+kw^o+\alpha (w^o)^3=b(\xi;r^o)u^o(t),\\
w^o(0,t)=w^o(\ell,t)=0,\\
EIw^o_{\xi\xi}(0,t)+C_dw^o_{t\xi\xi}(0,t)=EIw^o_{\xi\xi}(\ell,t)+C_dw^o_{t\xi\xi}(\ell,t)=0\\
w^o(\xi,0)=w_0(\xi), \; w^o_t(\xi,0)=v_0(\xi),
\end{cases}\\
&\begin{cases}\tag{FVP}
\rho a f^o_{t}-\rho a g^o+3\alpha\int_0^\ell G(\xi,{\eta})(w^o({\eta}))^2g^o({\eta})\, d{\eta}=-\rho a q_1(\xi) w^o ,\\
\rho a g^o_{t}+(EI f^o_{\xi\xi}-C_dg^o_{\xi\xi})_{\xi\xi}-\mu g^o+kf^o=-\rho aq_2(\xi) w^o_t,\\
f^o(0,t)=f^o(\ell,t)=0,\; g^o(0,t)=g^o(\ell,t)=0,\\
EIf^o_{\xi\xi}(0,t)-C_dg^o_{\xi\xi}(0,t)=EIf^o_{\xi\xi}(\ell,t)-C_dg^o_{\xi\xi}(\ell,t)=0\\
f^o(\xi,T)=0, \; g^o(\xi,T)=0,
\end{cases}\\
&\begin{cases}\tag{OPT}
u^o(t)=-\int_0^{\ell}b(\xi;r^o)g^o(\xi,t)\, d\xi,\\
\int_0^{\tau}\int_0^{\ell}u^o(t)b_{r}(\xi;r^o)g^o(\xi,t)\,d\xi dt=0.
\end{cases}
\end{flalign}

\section{{Nonlinear Waves}}
Nonlinear waves occur in many applications, including  fluid mechanics, electromagnetism, elasticity, and also relativistic quantum mechanics.
Let the wave evolve on a region $\Omega$ that is a bounded, open, connected subset of $\mathbb{R}^2	$. It is assumed that $\Omega$ has a Lipschitz boundary  separated into $\partial \Omega=\overline{\Gamma_0\cup\Gamma_1}$ where  $\Gamma_0\cap\Gamma_1=\emptyset$ and $\Gamma_0\neq \emptyset$. Denote by $\nu$  the unit outward normal vector field on $\partial \Omega$. 
\Cref{wave-schematic} illustrates the region and the shape of an actuator.
Define $\as=L^2(\Omega)$ and let $r(\xi)$, $\xi\in \Omega$, be the actuator shape design. 
There are many possible choices of admissible shapes. One is
$$K_{ad}  = \{ r \in C^1 (\overline{\Omega} ) :  \norm{r}{C^1 (\overline{\Omega})}\leq 1 \} . $$ A nonlinear wave model with initial conditions $w_0(\xi)$ and $v_0(\xi)$ is 
\begin{equation}
\begin{cases}
\begin{aligned}
&\p{w}{t}{2}(\xi,t)=\Delta w(\xi,t) +F(w(\xi,t))+r(\xi)u(t), &(\xi,t)&\in \Omega\times (0,\infty),\\
&w(\xi,0)=w_0(\xi), \; \p{w}{t}{}(\xi,0)=v_0(\xi),  &\xi&\in \Omega,\notag\\
&w(\xi,t)=0, &(\xi,t)&\in \Gamma_0\times [0,\infty),\notag\\
&\p{w}{\nu}{}(\xi,t)=0, &(\xi,t) &\in \Gamma_1\times [0,\infty).\notag
\end{aligned}
\end{cases}
\end{equation}

\begin{figure}[H]
\centering
\includegraphics[width=0.7\linewidth]{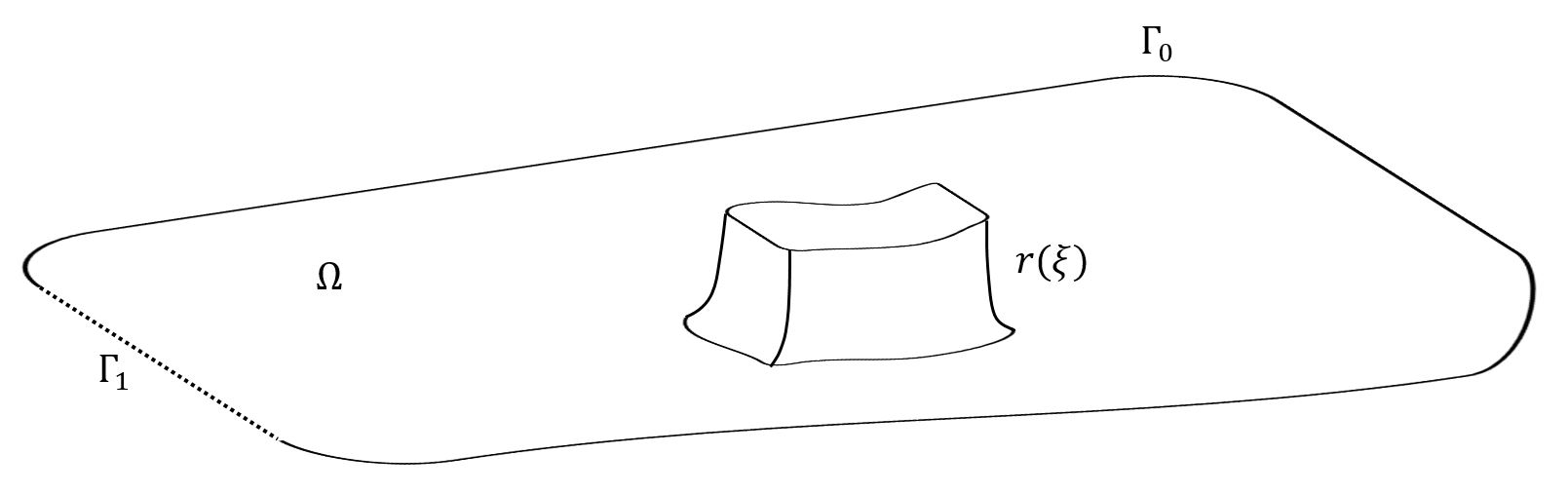}
\caption{Schematic of an actuator on the wave region.}
\label{wave-schematic}
\end{figure}

Let $\ss=H^1_{\Gamma_0}(\Omega)\times L^2(\Omega)$ and define $\A:D(\A)\to \ss$ as
\begin{gather}
\A(w,v)=(v,\nabla w),\\
D(\A)=\left\{(w,v)\in \ss\, | \, w\in H^2(\Omega)\cap H_{\Gamma_0}^1(\Omega), \; v\in H^1_{\Gamma_0}(\Omega),\; \p{w}{\nu}{}{\Big |}_{\Gamma_1}=0 \right\}.\notag
\end{gather}
The operator $\A$ is skew-adjoint and generates a strongly continuous unitary group on $\ss$; see for example, \cite[Thm.~3.24]{engel2000}.

\begin{assumption}\label{D}\leavevmode
\begin{enumerate}
\item \label{D1}The function $F(\zeta)$ is twice continuously differentiable over $\mathbb{R}$; denote its derivatives by $F'(\zeta)$ and $F'' (\zeta)$.
\item \label{D2}There are numbers $a_0>0$ and $b>1/2$ such that $|F''(\zeta)|\le a_0(1+|\zeta|^b)$.
\end{enumerate}
\end{assumption}
The nonlinear operator $\F(\cdot):\ss\to \ss$ is  defined as 
\begin{equation}\label{F-wave}
\F(w,v) =(0,F(w) ) .
\end{equation}
Assumption \ref{D} is needed to ensure that $\F (\cdot) : \ss \to \ss $ and satisfies assumption \ref{B}\ref{as:B1} and  that  the G\^ateaux derivative of $\F(\cdot)$ is also an operator on $\ss$.
Some examples of $F(\cdot)$ satisfying this assumption are $F(w)=\sin(w)$ in the  Sine-Gordon equation and $F(w)=|w|^kw, \; k\ge2$ in the Klein-Gordon equation \cite[Sec.~5.2]{sell2013dynamics}.

\begin{lemma}
Under assumption \ref{D}, 
\begin{enumerate}
\item the operator $\F(\cdot)$   is G\^ateaux differentiable on $\ss$, with the G\^ateaux derivative at $\xb=(w,v)$ in the direction $\pb=(f,g)$ given by $\F'_{\xb}\pb=(0,F'(w)f)$,
\item {the mapping $\xb\mapsto \F'_{\xb}$ is bounded, and}
\item $\F(\cdot)$ satisfies assumption \ref{B}\ref{as:B1}.
\end{enumerate}
\end{lemma}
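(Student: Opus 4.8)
The plan is to treat the three claims in order, using in each the growth bound in assumption \ref{D}\ref{D2} together with the two--dimensional Sobolev embeddings $H^1_{\Gamma_0}(\Omega)\hookrightarrow L^q(\Omega)$, which hold for every finite $q$ since $\Omega\subset\mathbb{R}^2$. Since only the second component of $\F$ is nontrivial, every estimate reduces to an $L^2(\Omega)$ bound on a scalar expression built from $w$ and $f$, where $\xb=(w,v)$ and $\pb=(f,g)$.

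For G\^ateaux differentiability I would fix $\xb=(w,v)$ and $\pb=(f,g)$ in $\ss$ and expand the difference quotient of the scalar map $\zeta\mapsto F(\zeta)$ by a second--order Taylor expansion: at each spatial point there is an intermediate value $\theta$ between $w$ and $w+\epsilon f$ with
\begin{equation}
\frac{F(w+\epsilon f)-F(w)}{\epsilon}-F'(w)f=\frac{\epsilon}{2}F''(\theta)f^2. \notag
\end{equation}
The task is then to show the right--hand side tends to $0$ in $L^2(\Omega)$. Using $|\theta|\le|w|+|f|$ for $|\epsilon|\le 1$ and the bound $|F''(\theta)|\le a_0(1+(|w|+|f|)^b)$, H\"older's inequality and the embeddings $H^1_{\Gamma_0}(\Omega)\hookrightarrow L^q(\Omega)$ give a bound on $\norm{F''(\theta)f^2}{L^2(\Omega)}$ that is uniform in $\epsilon\in[-1,1]$; the scalar prefactor $\epsilon/2$ then forces the limit to vanish. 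This establishes that the G\^ateaux derivative is $\F'_{\xb}\pb=(0,F'(w)f)$; linearity in $\pb$ is immediate, and boundedness of this operator for fixed $\xb$ follows from the same type of estimate.

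For the second claim I would estimate the operator norm directly. Integrating the growth bound once gives $|F'(\zeta)|\le c(1+|\zeta|^{b+1})$, so that for $\normm{\pb}\le 1$,
\begin{equation}
\norm{F'(w)f}{L^2(\Omega)}\le c\,\norm{(1+|w|^{b+1})f}{L^2(\Omega)}\le C\left(1+\norm{w}{H^1}^{b+1}\right)\norm{f}{H^1}, \notag
\end{equation}
again by H\"older's inequality and the $L^q$ embeddings. Since $\norm{w}{H^1}\le\normm{\xb}$, this yields $\norm{\F'_{\xb}}{\mc L(\ss)}\le C(1+\normm{\xb}^{b+1})$, so bounded sets in $\ss$ are mapped to bounded sets in $\mc L(\ss)$, as required by assumption \ref{C}\ref{as:C1}.

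The main obstacle is the third claim, the verification of assumption \ref{B}\ref{as:B1}, and here I would follow the argument used for the railway model, replacing the cubic nonlinearity by $F$. Given $\xb_n=(w_n,v_n)$ bounded in $C(0,\tau;\ss)$ with $\xb_n\rightharpoonup\xb=(w,v)$ in $L^p(0,\tau;\ss)$, the component $w_n$ is bounded in $C(0,\tau;H^1_{\Gamma_0}(\Omega))$. Because $H^1_{\Gamma_0}(\Omega)\hookrightarrow\hookrightarrow L^q(\Omega)$ compactly (Rellich--Kondrachov, \cite[Ch.~6]{adams2003sobolev}) for every finite $q$, \Cref{thm-simon} yields a subsequence with $w_n\to w$ strongly in $L^p(0,\tau;L^q(\Omega))$, the limit being identified with $w$ by uniqueness of the weak limit. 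The final step is to show this convergence propagates through the Nemytskii operator, i.e.\ $F(w_n)\to F(w)$ in $L^p(0,\tau;L^2(\Omega))$. Writing $F(w_n)-F(w)=F'(\eta_n)(w_n-w)$ with $|\eta_n|\le|w_n|+|w|$ and $|F'(\eta_n)|\le c(1+(|w_n|+|w|)^{b+1})$, a H\"older split separates a factor controlled by the uniform $C(0,\tau;H^1)$ bound (hence bounded in every $L^q$) from the factor $w_n-w$, which vanishes in the chosen $L^q$; taking $q$ large enough closes the estimate pointwise in $t$ and then in $L^p(0,\tau;\cdot)$. Strong convergence implies weak convergence, giving $\F(\xb_n)\rightharpoonup\F(\xb)$ in $L^p(0,\tau;\ss)$. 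The delicate point throughout is the bookkeeping of the H\"older exponents needed to absorb the superlinear growth of $F$ and $F'$, which is exactly where assumption \ref{D}\ref{D2} enters.
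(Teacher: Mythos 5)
Your proposal follows the paper's own proof in all three parts: parts 1 and 2 use exactly the paper's combination of a second-order Taylor expansion, the growth bound of assumption \ref{D}\ref{D2}, H\"older's inequality, and the embeddings $H^1_{\Gamma_0}(\Omega)\hookrightarrow L^q(\Omega)$ (the paper uses the integral form of the Taylor remainder together with Jensen's inequality where you use the Lagrange form; this difference is immaterial), and part 3 uses the same compactness route through \Cref{thm-simon} followed by a mean-value/H\"older estimate for the Nemytskii operator, just as in the paper's argument for the railway model and for the wave model itself.

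There is, however, one genuine gap, and it sits precisely in the step you call the main obstacle. \Cref{thm-simon} has \emph{two} hypotheses: boundedness in $L^1_{loc}(0,\tau;\mathbb{Y})$ with $\mathbb{Y}$ compactly embedded in the target space, \emph{and} the time-translation condition
\begin{equation}
\int_0^{\tau-h}\norm{w_n(t+h)-w_n(t)}{L^q(\Omega)}^p\,dt\to 0 \quad\text{as } h\to 0, \text{ uniformly in } n. \notag
\end{equation}
You verify only the first and then conclude strong convergence of a subsequence of $w_n$ in $L^p(0,\tau;L^q(\Omega))$. The second hypothesis is not a formality and does not follow from boundedness in $C(0,\tau;H^1_{\Gamma_0}(\Omega))$: take $w_n(t)=\sin(nt)\phi$ with a fixed nonzero $\phi\in H^1_{\Gamma_0}(\Omega)$; this sequence is bounded in $C(0,\tau;H^1_{\Gamma_0}(\Omega))$ and converges weakly to zero, yet choosing $h=\pi/n$ keeps the displayed quantity bounded away from zero, and indeed $w_n$ has no strongly convergent subsequence in $L^p(0,\tau;L^q(\Omega))$. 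Spatial compactness alone cannot rule out fast temporal oscillation, which is exactly the mechanism that defeats weak continuity of a nonlinear Nemytskii map. The paper's proof states this second hypothesis explicitly (its condition of the form above) before invoking \Cref{thm-simon}; your write-up must do the same. Note that closing it honestly requires extra structure on the sequence beyond what assumption \ref{B}\ref{as:B1} provides on its face — for instance, for mild solutions of the wave system one has $w_n(t+h)-w_n(t)=\int_t^{t+h}v_n(s)\,ds$, hence $\norm{w_n(t+h)-w_n(t)}{L^2(\Omega)}\le h\sup_n\norm{v_n}{C(0,\tau;L^2(\Omega))}$, and interpolation of $L^q(\Omega)$ between $L^2(\Omega)$ and $H^1(\Omega)$ then gives the required uniform decay. (The paper's own one-line appeal to the bounded convergence theorem yields the limit for each fixed $n$ but not uniformity in $n$, so this point deserves care rather than imitation.) The remaining steps of your part 3 — identification of the limit by uniqueness of weak limits, the H\"older bookkeeping with $|F'(\eta_n)|\le c\bigl(1+(|w_n|+|w|)^{b+1}\bigr)$, and passing from strong to weak convergence — match the paper and are fine.
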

\begin{proof}
To prove the first part of the lemma, it must be shown that for any variation $f \in  H^1_{\Gamma_0}(\Omega) ,$
\begin{equation}
\lim_{ \epsilon\to 0}\norm{\frac{1}{\epsilon}\left(F(w+\epsilon f)-F(w)\right)-F'(w)f}{L^2(\Omega)}=0,
\end{equation}
or 
\begin{equation}\label{eq5}
\lim_{ \epsilon\to 0}\int_{\Omega}{\big |}\frac{1}{\epsilon}\left(F(w(\xi)+\epsilon f(\xi))-F(w(\xi))\right)-F'(w(\xi))f(\xi){\big |}^2 \, d\xi=0.
\end{equation}
Recall that because of the continuous embedding $H^1_{\Gamma_0}(\Omega)\hookrightarrow L^p(\Omega)$, the functions $f$ and $w$ belong to $L^p(\Omega)$ for all $p\in [1,\infty)$. 
Use of assumption \ref{D}, applying  Taylor's theorem with integral reminder  to $F(\cdot)$, and using Jensen's inequality, the integral in (\ref{eq5}) becomes
\begin{flalign}
&\int_{\Omega}\left(\int_0^1\epsilon(1-\eta) F''(w(\xi)+\eta\epsilon f(\xi))f^2(\xi)d\eta\right)^2 d\xi\notag\\
&\le \int_{\Omega}\int_0^1\epsilon^2(1-\eta)^2 F''^2(w(\xi)+\eta\epsilon f(\xi))f^4(\xi)d\eta d\xi \notag \\
&\le \int_{\Omega}\int_0^1\epsilon^2(1-\eta)^2 a_0^2\left(2+2^{2b}|w(\xi)|^{2b}+2^{2b}\eta^{2b}|\epsilon|^{2b} |f(\xi)|^{2b}\right)f^4(\xi)d\eta d\xi \label{eq6}
\end{flalign}
Applying H\"older's inequality shows that integral (\ref{eq6}) is bounded above by a number, and also converges to zero as $\epsilon\to 0$.

Furthermore,  the operator $\F'_{\xb}$ satisfies, for any $\xb=(w,v)$ and $\pb=(f,g)$ in $\ss$,
\begin{equation}
\normm{\F'_{\xb}\pb}^2=\int_{\Omega}F'^2(w(\xi))f^2(\xi)d\xi.
\end{equation}
Assumption \ref{D}\ref{D1} ensures that there is a number $a_1>0$ such that $|F'(\zeta)|\le a_1(1+|\zeta|^{b+1})$. Use this together with H\"older's inequality to obtain
\begin{flalign}
\normm{\F'_{\xb}\pb}^2&\le \int_{\Omega}2a_1^2(1+w^{2b+2}(\xi))f^2(\xi)d\xi.\notag\\
&\le 2a^2_1\left(\norm{f}{L^2(\Omega)}^2+\norm{w}{L^{4b+4}(\Omega)}^{2b+2}\norm{f}{L^4(\Omega)}^2\right)
\end{flalign}
Apply the embeddings $H^1_{\Gamma_0}(\Omega)\hookrightarrow L^{p}(\Omega)$; letting  $c_{p}$ indicates the various embedding constants,
\begin{flalign}
\normm{\F'_{\xb}\pb}^2 &\le 2a^2_1\left(c^2_2+c_{4b+4}^{2b+2}c_4^2\norm{w}{H^1_{\Gamma_0}(\Omega)}^{2b}\right)\norm{f}{H^1_{\Gamma_0}(\Omega)}^2\notag\\
&\le 2a^2_1\left(c^2_2+c_{4b+4}^{2b+2}c_4^2\normm{\xb}^{2b}\right)\normm{\pb}^2.\label{eq16}
\end{flalign}
Inequality (\ref{eq16}) implies that
\begin{equation}
\norm{\F'_{\xb}}{\mc{L}(\ss)}^2\le 2a^2_1\left(c^2_2+c_{4b+4}^{2b+2}c_4^2\normm{\xb}^{2b}\right).
\end{equation}
This inequality shows that the mapping $\xb\mapsto\F'_{\xb}$ is bounded.

It will now be shown that   $\F(\cdot)$ satisfies assumption \ref{B}\ref{as:B1}.  Consider a bounded sequence $\xb_n(t)=(w_n(t),v_n(t))$ in $C(0,\tau;\ss)$ that weakly converges to some element $\xb(t)=(w(t),v(t))$ in $L^p(0,\tau;\ss)$. The sequence $w_n(t)$ is bounded in $C(0,\tau;H^1_{\Gamma_0}(\Omega))$ and so  it is in $C(0,\tau;L^q(\Omega))$ for all $q\in [1,\infty)$. This together with the bounded convergence theorem ensures that for every $p\in[1,\infty)$
\begin{equation}\label{eq7}
\int_0^{\tau-h}\norm{w_n(t+h)-w_n(t)}{L^q(\Omega)}^pdt\to 0 \text{ as } h\to 0 \text{ uniformly for all }n.
\end{equation}
The space $H^1_{\Gamma_0}(\Omega)$ is compactly embedded in $L^q(\Omega)$ by Rellich-Kondrachov compact embedding theorem \cite[Ch.~6]{adams2003sobolev}. By \Cref{thm-simon}, this embedding together with (\ref{eq7}) ensures that $w_n(t)$ has a strongly convergent subsequence in $L^p(0,\tau;L^q(\Omega))$. The sequence $w_n(t)$ by assumption converges weakly to $w(t)$ in $L^p(0,\tau;H^1_{\Gamma_0}(\Omega))$; a weak limit is unique, so $w_n(t)$ converges strongly to $w(t)$ in $L^p(0,\tau;L^q(\Omega))$. The nonlinear operator $\F(\cdot)$ maps $\xb_n(t)$ to
\begin{equation}
\F(w_n(t),v_n(t))=(0,F(w_n(t))).
\end{equation}
Use Taylor's theorem with integral reminder, and let $h(\xi,t;\eta)=w(\xi,t)+\eta(w_n(\xi,t)-w(\xi,t))$, $\eta\in [0,1]$, to obtain
\begin{flalign}
|F(w_n(\xi,t))-F(w(\xi,t))|&\le \left(\int_0^1|F'(h(\xi,t;\eta))|d\eta\right) | w_n(\xi,t)-w(\xi,t)|. \label{eq11}
\end{flalign}
Let $$M_1(\xi,t)=a_1 \left(1+\int_0^1|h(\xi,t;\eta)|^{b+1} d\eta\right).$$
Taking integral of both side of (\ref{eq11}) and using H\"older inequality yield
\begin{flalign}
\norm{F(w_n)-F(w)}{L^p(0,\tau;L^2(\Omega))}^p&\le\int_0^\tau\left(\int_\Omega M_1^2(\xi,t) |w_n(\xi,t)-w(\xi,t)|^2d\xi\right)^{\frac{p}{2}}dt\notag \\
&\le \norm{M_1}{L^{2p}(0,\tau;L^4(\Omega))}^p \norm{w_n-w}{L^{2p}(0,\tau;L^4(\Omega))}^p.\label{eq13}
\end{flalign}
Note that $\norm{M_1}{L^{2p}(0,\tau;L^4(\Omega))}< \infty$ since $w(\xi,t)$ and $w_n(\xi,t)$ are in $L^p(0,\tau;L^q(\Omega))$ for all $p\in [1,\infty)$ and $q\in [1,\infty)$.
From (\ref{eq13}), it follows that $F(w_n)$ strongly converges to $F(w)$ in $L^p(0,\tau;L^2(\Omega))$. Therefore, the sequence $\F(w_n(t),v_n(t))$ strongly (and so weakly) converges to $\F(w(t),v(t))$ in $L^p(0,\tau;\ss)$.
\end{proof}

Let $\xb^o(t)=(w^o,v^o)$ at time $t\in [0,\tau]$. As for the railway track example, in  order to obtain an expression for the adjoint of the operator $\F'_{\xb^o(t)}$, the following boundary-value problem needs to be solved:
\begin{equation}
\begin{cases}
\begin{aligned}
&\Delta h(\xi)=-F'(w^o(\xi))g(\xi),\quad &\xi& \in \Omega,\\
&h(\xi)=0, \quad &\xi& \in \Gamma_0,\\
&\p{h}{\nu}{}(\xi)=0, \quad &\xi& \in \Gamma_1.
\end{aligned}
\end{cases}
\label{eq-adjoint-bvp-wave}
\end{equation}
The adjoint with respect to  $\ss$ of  $\F'_{\xb^o(t)}$  is
\begin{equation}
\F'^*_{\xb^o(t)}(f,g)=(h,0),
\end{equation}
where $h$ solves \eqref{eq-adjoint-bvp-wave}.
Define $\cs =\mathbb{R}$ and the input operator $\B(r)\in \mc{L}(\cs ,\ss)$ by
\begin{equation}
\B(r)u=(0,r(\xi)u).
\end{equation} 
The adjoint of this operator is
\begin{equation}
\B^*(r)(f,g)=\int_{\Omega}r(\xi)g(\xi)d \xi, \quad \forall (f,g)\in \ss.
\end{equation}
Let $\pb^o(t)=(f,g)$ at time $t\in [0,\tau]$. Use \Cref{corollary2} to find
\begin{equation}
(\B'_ru)^*\pb^o(t)=ug. 
\end{equation}

Furthermore, let $q_1\in C^1(\overline{\Omega})$ and $q_2\in C(\overline{\Omega})$ be some non-negative functions. Set $\mc{Q}(w,v)=(q_1 w,q_2 v)$ and $\mc{R}=1$ in the cost function of assumption \ref{C}\ref{as:C3}.

If the optimal control $u^o$ and optimal actuator design $r^o$ are in the interior of $U_{ad}\times K_{ad}$, then by \Cref{corollary2} the following  equations are satisfied:
\begin{flalign}
&\begin{cases}
\begin{aligned}
&\p{w^o}{t}{2}(\xi,t)=\Delta w^o(\xi,t) +F(w^o(\xi,t))+r^o(\xi)u^o(t), &(\xi,t)&\in \Omega\times (0,\tau],\\
&w^o(\xi,0)=w_0(\xi), \; \p{w^o}{t}{}(\xi,0)=v_0(\xi),  &\xi&\in \Omega,\notag\\
&w^o(\xi,t)=0, &(\xi,t)&\in \Gamma_0\times [0,\tau],\notag\\
&\p{w^o}{\nu}{}(\xi,t)=0, &(\xi,t) &\in \Gamma_1\times [0,\tau],\notag
\end{aligned}
\end{cases}\tag{IVP}\\
&\begin{cases}
\begin{aligned}
&\p{f^o}{t}{}(\xi,t)=-g^o(\xi,t) -h^o(\xi,t)-q_1(\xi)w^o(\xi,t), &(\xi,t)&\in \Omega\times (0,\tau],\\[1mm]
&\p{g^o}{t}{}(\xi,t)=-\Delta f^o(\xi,t)-q_2(\xi)\p{w^o}{t}{}(\xi,t), &(\xi,t)&\in \Omega\times (0,\tau],\\[1mm]
&f^o(\xi,\tau)=0, \; g^o(\xi,\tau)=0,  &\xi&\in \Omega,\notag\\[1mm]
&f^o(\xi,t)=0, &(\xi,t)&\in \Gamma_0\times [0,\tau],\notag\\[1mm]
&\p{f^o}{\nu}{}(\xi,t)=0, &(\xi,t) &\in \Gamma_1\times [0,\tau],\notag
\end{aligned}
\end{cases}\tag{FVP}\\
&\begin{cases}
\begin{aligned}
&u^o(t)=-\int_{\Omega}r^o(\xi)g^o(\xi,t)d\xi, &t&\in [0,\tau],\\
&\int_0^\tau u^o(t)g^o(\xi,t) dt=0, &\xi&\in \Omega. 
\end{aligned}
\end{cases}\tag{OPT}
\end{flalign}

\section{Conclusions}
\label{sec:conclusions}
Optimal control of semi-linear infinite-dimensional systems was considered in this paper where the optimal controller design involves both the controlled input and the actuator design.  It was shown that the existence of an optimal control together with an optimal actuator design is guaranteed under some assumptions. Moreover, first-order necessary optimality conditions were obtained. The theory was illustrated with several applications. 

Current work is concerned with developing numerical methods for solution of the optimality equations and also the consideration of a wider class of nonlinearities. {Extension of these problems to situations  where the input operator is not bounded on the state space is also of interest.}

\appendix
 
\section{Proof of \cref{pro:lipschitz S}} \label{ap:appendix a}
\setcounter{equation}{0}
\renewcommand{\theequation}{\thesection.\arabic{equation}}
For $\xb_0\in \ss$ and $\rb\in K_{ad}$, consider $\xb_1(t)$ and $\xb_2(t)$ as the mild solutions to (\ref{IVP}) corresponding to the inputs $\ub_1(t)$ and $\ub_2(t)$, respectively. The inputs are in a ball of radius $R$ contained in $L^p(0,\tau;\cs)$, $1<p<\infty$; consequently, by \cref{corollary1} and assumption \ref{A}\ref{as:A3}, the states $\xb_1(t)$ and $\xb_2(t)$ are contained in a ball of radius
\begin{equation}
\delta=c_{\tau}(\normm{\xb_0}+M_\mc{B} R).
\end{equation}
From (\ref{eq:mild solution}), it follows that
\begin{flalign}
\xb_2(t)-\xb_1(t)=&\int_0^t \mc{T}(t-s)\left(\F(\xb_2(s))-\F(\xb_1(s))\right) ds\notag\\
&+\int_0^t \mc{T}(t-s) \mc{B}(\rb)\left(\ub_2(s)-\ub_1(s)\right) ds.	
\end{flalign} 
Recall that $\mc{T}(t)$ satisfies $\normm{\mc{T}(t)}\le M_{\T}$ for all $t\in [0,\tau]$ and some number $M_{\T}>0$. Also, remember that the operator $\F(\cdot)$ is locally Lipschitz continuous, and $\mc{B}(\rb)$ is uniformly bounded in $\ss$ for all $\rb\in K_{ad}$. Taking the norm in $\ss$ of both sides of this equation yields
\begin{flalign}
\normm{\xb_2(t)-\xb_1(t)}\le& M_{\T}L_{\F\delta}\int_0^t\normm{\xb_2(s)-\xb_1(s)}\, ds\notag\\
&+M_{\T}M_\mc{B}\tau^{(p-1)/p} \norm{\ub_2-\ub_1}{p}.
\end{flalign}
Define the constant $L_{\ub}$ as
\begin{equation}
L_{\ub}=e^{M_{\T}L_{\F\delta}\tau}M_{\T}M_\mc{B}\tau^{(p-1)/p}.
\end{equation}
By Gronwall's Lemma \cite[Thm.~1.4.1]{zettl2005}, it follows that
\begin{equation}
\norm{\xb_2-\xb_1}{C(0,\tau;\ss)}\le L_{\ub} \norm{\ub_2-\ub_1}{p}.
\end{equation} 
This is in fact the inequality (\ref{eq:lip in u}).

Similarly, for a fixed initial condition $\xb_0\in \ss$ and control input $\ub\in U_{ad}$, consider $\xb_1(t)$ and $\xb_2(t)$ as the mild solutions to (\ref{IVP}) corresponding to the actuator designs $\rb_1$ and $\rb_2$, respectively. Use local Lipschitz continuity of $\F(\cdot)$ and growth condition on semigroup $\mc{T}(t)$ and obtain
\begin{flalign}\label{eq:c5}
\normm{\xb_2(t)-\xb_1(t)}\le& M_{\T}L_{\F\delta}\int_0^t\normm{\xb_2(s)-\xb_1(s)}\, ds\notag\\
&+M_{\T}\tau^{(p-1)/p}\norm{\ub}{p}\norm{\mc{B}(\rb_2)-\mc{B}(\rb_1)}{\mc{L}(\cs,\ss)}.
\end{flalign}%
Assumption \ref{C}\ref{as:C2} implies that the control operator $\mc{B}(\rb)$ is locally Lipschitz continuous with respect to $\rb$. That is, letting $$L_\mc{B}=\sup\{\norm{\B'_\rb}{\mc L(\as,\mc L(\cs,\ss))}:\rb \in K_{ad}\},$$ operator $\B(\rb)$ for all $\rb_1$ and $\rb_2$ in $K_{ad}$ satisfies
\begin{equation}
\norm{\mc{B}(\rb_2)-\mc{B}(\rb_1)}{\mc{L}(\cs,\ss)}\le L_\mc{B} \norm{\rb_2-\rb_1}{\mathbb{K}}.
\end{equation} 
Accordingly, the inequality (\ref{eq:c5}) can be re-written as
\begin{flalign}
\normm{\xb_2(t)-\xb_1(t)}\le& M_{\T}L_{\F\delta}\int_0^t\normm{\xb_2(s)-\xb_1(s)}\, ds\notag\\
&+M_{\T}\tau^{(p-1)/p}R L_{\mc{B}}\norm{\rb_2-\rb_1}{\mathbb{K}}.
\end{flalign}
Denote the constant $L_{\rb}$ by
\begin{equation}
L_{\rb}=e^{M_{\T}L_{\F\delta}\tau}M_{\T}\tau^{(p-1)/p}R L_{\mc{B}}.
\end{equation}
Use Gronwall's Lemma \cite[Thm.~1.4.1]{zettl2005} to derive
\begin{equation}
\norm{\xb_2-\xb_1}{C(0,\tau;\ss)}\le L_{\rb} \norm{\rb_2-\rb_1}{\mathbb{K}}. \label{eq:d11}
\end{equation} 
This is in fact the inequality (\ref{eq:lip in r}).

\section{Proof of \cref{pro:frechet derivative in r}} 
\setcounter{equation}{0}
\label{ap:appendix d} Let number $\epsilon$ be small enough so that $\rb^o+\epsilon\tilde{\rb}\in K_{ad}$. Denote by $\xb=\mc{S}(\ub;\rb^o+\epsilon \tilde{\rb},\xb_0)$ the mild solution to the IVP
\begin{equation}
\dot{\xb}(t)=\mc{A}\xb(t)+\F(\xb(t))+\mc{B}(\rb^o+\epsilon \tilde{\rb})\ub(t), \quad \xb(0)=\xb_0.\label{eq:yp}
\end{equation}
The state $\xb^o=\mc{S}(\ub;\rb^o,\xb_0)$ is the mild solution of the IVP 
\begin{equation}
\dot{\xb}^o(t)=\mc{A}\xb^o(t)+\F(\xb^o(t))+\mc{B}(\rb^o)\ub(t), \quad \xb^o(0)=\xb_0. \label{eq:ystar}
\end{equation}
Define $\xb_{e}=(\xb-\xb^o)/\epsilon-\tilde{\yb}$,
subtract the equations (\ref{eq:ystar}) and (\ref{eq:ytilde}) from (\ref{eq:yp}), obtain 
\begin{flalign}\label{eq:c8}
\dot{\xb}_{e}(t)=&(\mc{A}+\F'_{\xb^o(t)})\xb_{e}(t)+\frac{1}{\epsilon}\left(\F(\xb(t))-\F(\xb^o(t))-\F'_{\xb^o(t)}(\xb(t)-\xb^o(t))\right)\notag \\
&+\left(\frac{1}{\epsilon}\left(\B(\rb^o+\epsilon\tilde{\rb})-\B(\rb^o)\right)-\B'_{\rb^o}\tilde{\rb}\right)\ub(t), \quad \xb_{e}(0)=0.
\end{flalign}
Define $\bm{\eb}_{\F}(t)$ and $\bm{\eb}_{\B}$ as 
\begin{subequations}
\begin{flalign}
\bm{\eb}_{\F}(t)&\coloneqq \frac{1}{\epsilon}\left(\F(\xb(t))-\F(\xb^o(t))-\F'_{\xb^o(t)}(\xb(t)-\xb^o(t))\right),\\
\bm{\eb}_{\B} &\coloneqq \frac{1}{\epsilon}\left(\B(\rb^o+\epsilon\tilde{\rb})-\B(\rb^o)\right)-\B'_{\rb^o}\tilde{\rb}.
\end{flalign}
\end{subequations}%
Assumption \ref{C}\ref{as:C1} and \ref{C}\ref{as:C2} ensure that as $\epsilon\to 0$
\begin{subequations}
\begin{gather}
\normm{\bm{\eb}_{\F}(t)}\to 0, \quad \forall t\in [0,\tau],\label{e(t)2}\\
\norm{\bm{\eb}_{\B}}{\mc{L}(\cs,\ss)} \to 0.\label{eB}
\end{gather}\label{eq:d13}%
\end{subequations}%
Also, similar to inequality (\ref{bound on e(t)}), using \Cref{pro:lipschitz S}\ref{-b}, and letting $\delta=c_\tau(\normm{\xb_0}+M_{\B}R)$ and $M_{\F'}=\sup \{\|\F'_{\xb^o(t)}\|: \;t\in[0,\tau]\}$, the following upper bounded can be obtained
\begin{equation}\label{bound on e(t)2}
\normm{\eb_\F(t)}\le \left(L_{\F\delta}+M_{\F'}\right)L_\rb\norm{\tilde{\rb}}{\as}, \quad \forall t\in [0,\tau].
\end{equation}

Rewrite (\ref{eq:c8}) as follows
\begin{flalign}
\dot{\xb}_{e}(t)=&(\mc{A}+\F'_{\xb^o(t)})\xb_{e}(t)+\bm{\eb}_{\F}(t)+\bm{\eb}_{\B}\ub(t), \quad \xb_e(0)=0.
\end{flalign}
According to \cref{lem:adjoint}\ref{a}, the mild solution of this evolution equation is described by an evolution operator $\mc{U}(t,s)$ as follows
\begin{equation}
\dot{\xb}_{e}(t)=\int_0^t\mc{U}(t,s)\eb_{\F}(s)ds+\int_0^t\mc{U}(t,s)\bm{\eb}_{\B}\ub(s)ds.
\end{equation}
 Let $M_{\mc{U}}$ be an upper bound for the operator norm of $\mc{U}(t,s)$ over $0\le t\le s \le \tau$,
\begin{flalign}
\norm{\xb_{e}}{L^p(0,\tau;\ss)}&\le \tau^{1/p}\norm{\xb_e}{C(0,\tau;\ss)}\notag \\
&\le \tau^{1/p}M_{\mc{U}}\int_0^{\tau} \normm{\bm{\eb}_{\F}(t)}+\tau^{1/p}M_{\mc{U}}\norm{\bm{\eb}_{\B}}{\mc{L}(\cs , \ss)}\norm{\ub}{1}\label{eq:d3}.
\end{flalign}
As a result of (\ref{e(t)2}) and (\ref{bound on e(t)2}), the first integral in (\ref{eq:d3}) tends to zero by the bounded convergence theorem. The second term of (\ref{eq:d3}) also converges to zero using (\ref{eB}). It follows that
\begin{equation}
\lim_{\epsilon\to 0}\norm{\frac{1}{\epsilon}\left(\mc{S}(\ub;\rb^o+\epsilon\tilde{\rb},\xb_0)-\mc{S}(\ub;\rb^o,\xb_0)\right)-\mc{S}'_{\rb^o}{\tilde{\rb}}}{L^p(0,\tau;\ss)}=\lim_{\epsilon\to 0}\norm{\xb_{e}}{L^p(0,\tau;\ss)}=0.\notag
\end{equation}
This shows that $\mc{S}'_{\rb^o}{\tilde{\rb}}$ is the G\^ateaux derivative of $\mc{S}(\ub;\rb,\xb_0)$ at $\rb^o$ in the direction $\tilde{\rb}$.

\bibliographystyle{siamplain}
\bibliography{library}

\begin{thebibliography}{10}

\bibitem{adams2003sobolev}
{\sc R.~A. Adams and J.~J.~F. Fournier}, {\em {Sobolev Spaces}}, Pure and
  Applied Mathematics, Elsevier Science, 2003.

\bibitem{ansari2011}
{\sc M.~Ansari, E.~Esmailzadeh, and D.~Younesian}, {\em {Frequency analysis of
  finite beams on nonlinear Kelvin-Voight foundation under moving loads}},
  Journal of Sound and Vibration, 330 (2011), pp.~1455--1471.

\bibitem{antoniades2001integrating}
{\sc C.~Antoniades and P.~D. Christofides}, {\em {Integrating nonlinear output
  feedback control and optimal actuator/sensor placement for transport-reaction
  processes}}, Chemical Engineering Science, 56 (2001), pp.~4517--4535.

\bibitem{armaou2008robust}
{\sc A.~Armaou and M.~A. Demetriou}, {\em {Robust detection and accommodation
  of incipient component and actuator faults in nonlinear distributed
  processes}}, AIChE journal, 54 (2008), pp.~2651--2662.

\bibitem{banks1987unified}
{\sc H.~T. Banks and K.~Ito}, {\em {A unified framework for approximation in
  inverse problems for distributed parameter systems}}, Control, Theory and
  Advanced Technology, 4 (1988), pp.~73--90.

\bibitem{barbu2012convexity}
{\sc V.~Barbu and T.~Precupanu}, {\em {Convexity and optimization in Banch
  spaces}}, Springer Science \& Business Media, 2012.

\bibitem{bergounioux2003structure}
{\sc M.~Bergounioux and K.~Kunisch}, {\em {On the structure of Lagrange
  multipliers for state-constrained optimal control problems}}, Systems \&
  control letters, 48 (2003), pp.~169--176.

\bibitem{boldrini2009}
{\sc J.~L. Boldrini, B.~M.~C. Caretta, and E.~Fern{\'{a}}ndez-Cara}, {\em {Some
  optimal control problems for a two-phase field model of solidification}},
  Revista Matem{\'{a}}tica Complutense, 23 (2009), p.~49.

\bibitem{buchholz2013}
{\sc R.~Buchholz, H.~Engel, E.~Kammann, and F.~Tr{\"{o}}ltzsch}, {\em {On the
  optimal control of the Schl{\"{o}}gl-model}}, Computational Optimization and
  Applications, 56 (2013), pp.~153--185.

\bibitem{casas1997pontryagin}
{\sc E.~Casas}, {\em {Pontryagin's principle for state-constrained boundary
  control problems of semilinear parabolic equations}}, SIAM Journal on Control
  and Optimization, 35 (1997), pp.~1297--1327.

\bibitem{casas2013}
{\sc E.~Casas, C.~Ryll, and F.~Tr{\"{o}}ltzsch}, {\em {Sparse optimal control
  of the Schl{\"{o}}gl and FitzHugh-Nagumo systems}}, Computational Methods in
  Applied Mathematics, 13 (2013), pp.~415--442.

\bibitem{chen1989proof}
{\sc S.~P. Chen and R.~Triggiani}, {\em {Proof of extensions of two conjectures
  on structural damping for elastic systems}}, Pacific Journal of Mathematics,
  136 (1989), pp.~15--55.

\bibitem{ciaramella2016}
{\sc G.~Ciaramella and A.~Borzi}, {\em {Quantum optimal control problems with a
  sparsity cost functional}}, Numerical Functional Analysis and Optimization,
  37 (2016), pp.~938--965.

\bibitem{dahlberg2002dynamic}
{\sc T.~Dahlberg}, {\em {Dynamic interaction between train and nonlinear
  railway track model}}, in Proc. Fifth Euro. Conf. Struct. Dyn., Munich,
  Germany, 2002, pp.~1155--1160.

\bibitem{reyes2016}
{\sc J.~C. de~los Reyes, R.~Herzog, and C.~Meyer}, {\em {Optimal Control of
  Static Elastoplasticity in Primal Formulation}}, SIAM Journal on Control and
  Optimization, 54 (2016), pp.~3016--3039.

\bibitem{edalatzadeh2016boundary}
{\sc M.~S. Edalatzadeh and A.~Alasty}, {\em {Boundary exponential stabilization
  of non-classical micro/nano beams subjected to nonlinear distributed
  forces}}, Applied Mathematical Modelling, 40 (2016), pp.~2223--2241.

\bibitem{edalatzadeh2019stability}
{\sc M.~S. Edalatzadeh and K.~A. Morris}, {\em {Stability and Well-posedness of
  a Nonlinear Railway Track Model}}, IEEE Control Systems Letters, 3 (2019),
  pp.~162--167.

\bibitem{engel2000}
{\sc K.-J. Engel and R.~Nagel}, {\em {One-parameter semigroups for linear
  evolution equations}}, Springer-Verlag New York, 2000.

\bibitem{FahDem}
{\sc F.~Fahroo and M.~A. Demetriou}, {\em {Optimal actuator/sensor location for
  active noise regulator and tracking control problems}}, Journal of
  Computational and Applied Mathematics, 114 (2000), pp.~137--158.

\bibitem{fattorini1999infinite}
{\sc H.~O. Fattorini}, {\em {Infinite dimensional optimization and control
  theory}}, vol.~54, Cambridge University Press, 1999.

\bibitem{fleig2017}
{\sc A.~Fleig and R.~Guglielmi}, {\em {Optimal Control of the Fokker--Planck
  Equation with Space-Dependent Controls}}, Journal of Optimization Theory and
  Applications, 174 (2017), pp.~408--427.

\bibitem{frecker2003recent}
{\sc M.~I. Frecker}, {\em {Recent advances in optimization of smart structures
  and actuators}}, Journal of Intelligent Material Systems and Structures, 14
  (2003), pp.~207--216.

\bibitem{hintermuller2017optimal}
{\sc M.~Hinterm\"uller, T.~Keil, and D.~Wegner}, {\em {Optimal Control of a
  Semidiscrete Cahn--Hilliard--Navier--Stokes System with Nonmatched Fluid
  Densities}}, SIAM Journal on Control and Optimization, 55 (2017),
  pp.~1954--1989.

\bibitem{hinze2008optimization}
{\sc M.~Hinze, R.~Pinnau, M.~Ulbrich, and S.~Ulbrich}, {\em {Optimization with
  PDE constraints}}, vol.~23, Springer Science \& Business Media, 2008.

\bibitem{homberg2010optimal}
{\sc D.~H{\"{o}}mberg, C.~Meyer, J.~Rehberg, W.~Ring, and D.~H. Omberg}, {\em
  {Optimal control for the thermistor problem}}, SIAM Journal on Control and
  Optimization, 48 (2010), pp.~3449--3481.

\bibitem{kalise2017optimal}
{\sc D.~Kalise, K.~Kunisch, and K.~Sturm}, {\em {Optimal Actuator Design Based
  on Shape Calculus}}, Mathematical Models and Methods in Applied Sciences, In
  Press (2017), \url{https://doi.org/10.1142/S0218202518500586}.

\bibitem{kasinathan2013h}
{\sc D.~Kasinathan and K.~Morris}, {\em {$\mathbb{H}_\infty$-optimal actuator
  location}}, IEEE Transactions on Automatic Control, 58 (2013),
  pp.~2522--2535.

\bibitem{kim1987boundary}
{\sc J.~U. Kim and Y.~Renardy}, {\em {Boundary control of the Timoshenko
  beam}}, SIAM Journal on Control and Optimization, 25 (1987), pp.~1417--1429.

\bibitem{kimmerle2018optimal}
{\sc S.-J. Kimmerle, M.~Gerdts, and R.~Herzog}, {\em {Optimal control of an
  elastic crane-trolley-load system-a case study for optimal control of coupled
  ODE-PDE systems}}, Mathematical and Computer Modelling of Dynamical Systems,
  24 (2018), pp.~182--206.

\bibitem{lagnese1991uniform}
{\sc J.~E. Lagnese and G.~Leugering}, {\em {Uniform stabilization of a
  nonlinear beam by nonlinear boundary feedback}}, Journal of Differential
  Equations, 91 (1991), pp.~355--388.

\bibitem{leugering2012constrained}
{\sc G.~Leugering, S.~Engell, A.~Griewank, M.~Hinze, R.~Rannacher, V.~Schulz,
  M.~Ulbrich, and S.~Ulbrich}, {\em {Constrained optimization and optimal
  control for partial differential equations}}, vol.~160, Springer Science \&
  Business Media, 2012.

\bibitem{li2003}
{\sc C.~Li, E.~Feng, and J.~Liu}, {\em {Optimal control of systems of parabolic
  PDEs in exploitation of oil}}, Journal of Applied Mathematics and Computing,
  13 (2003), pp.~247--259.

\bibitem{lou2003optimal}
{\sc Y.~Lou and P.~D. Christofides}, {\em {Optimal actuator/sensor placement
  for nonlinear control of the Kuramoto-Sivashinsky equation}}, IEEE
  Transactions on Control Systems Technology, 11 (2003), pp.~737--745.

\bibitem{martinez2000}
{\sc A.~Mart{\'{i}}nez, C.~Rodr{\'{i}}guez, and M.~E.
  V{\'{a}}zquez-M{\'{e}}ndez}, {\em {Theoretical and Numerical Analysis of an
  Optimal Control Problem Related to Wastewater Treatment}}, SIAM Journal on
  Control and Optimization, 38 (2000), pp.~1534--1553.

\bibitem{merger2017optimal}
{\sc J.~Merger, A.~Borzi, and R.~Herzog}, {\em {Optimal control of a system of
  reaction--diffusion equations modeling the wine fermentation process}},
  Optimal Control Applications and Methods, 38 (2017), pp.~112--132.

\bibitem{meyer2017optimal}
{\sc C.~Meyer and L.~M. Susu}, {\em {Optimal control of nonsmooth, semilinear
  parabolic equations}}, SIAM Journal on Control and Optimization, 55 (2017),
  pp.~2206--2234.

\bibitem{moon2006finite}
{\sc S.~H. Moon}, {\em {Finite element analysis and design of control system
  with feedback output using piezoelectric sensor/actuator for panel flutter
  suppression}}, Finite Elements in Analysis and Design, 42 (2006),
  pp.~1071--1078.

\bibitem{morris2011linear}
{\sc K.~Morris}, {\em {Linear-quadratic optimal actuator location}}, IEEE
  Transactions on Automatic Control, 56 (2011), pp.~113--124.

\bibitem{morris2015comparison}
{\sc K.~Morris and S.~Yang}, {\em {Comparison of actuator placement criteria
  for control of structures}}, Journal of Sound and Vibration, 353 (2015),
  pp.~1--18.

\bibitem{Mnoise}
{\sc K.~A. Morris}, {\em {Noise Reduction Achievable by Point Control}}, ASME
  Journal on Dynamic Systems, Measurement \& Control, 120 (1998), pp.~216--223.

\bibitem{DM2013}
{\sc K.~A. Morris, M.~A. Demetriou, and S.~D. Yang}, {\em {Using
  ${H}_2$-control performance metrics for infinite-dimensional systems}}, IEEE
  Trans. on Automatic Control, 60 (2015), pp.~450--462.

\bibitem{MorrisVestCDC}
{\sc K.~A. Morris and A.~Vest}, {\em {Design of damping for optimal energy
  dissipation of vibrations}}, in Proc. of the IEEE Conference on Decision and
  Control, 2016.

\bibitem{pazy}
{\sc A.~Pazy}, {\em {Semigroups of linear operators and applications to partial
  differential equations}}, vol.~44, Springer Science \& Business Media, 2012.

\bibitem{PTZ2013}
{\sc Y.~Privat, E.~Tr{\'{e}}lat, and E.~Zuazua}, {\em {Optimal location of
  controllers for the one-dimensional wave equation}}, Ann. Inst. H.
  Poincar{\'{e}} Anal. Non Lin{\'{e}}aire, 30 (2013), pp.~1097--1126.

\bibitem{Privatetal2013}
{\sc Y.~Privat, E.~Tr{\'{e}}lat, and E.~Zuazua}, {\em {Optimal {O}bservation of
  the {O}ne-dimensional {W}ave {E}quation}}, Jour. Fourier Anal. Appl., 19
  (2013), pp.~514--544.

\bibitem{raymond1999hamiltonian}
{\sc J.~P. Raymond and H.~Zidani}, {\em {Hamiltonian Pontryagin's principles
  for control problems governed by semilinear parabolic equations}}, Applied
  mathematics \& optimization, 39 (1999), pp.~143--177.

\bibitem{saviz2015optimal}
{\sc M.~R. Saviz}, {\em {An optimal approach to active damping of nonlinear
  vibrations in composite plates using piezoelectric patches}}, Smart Materials
  and Structures, 24 (2015), p.~115024.

\bibitem{sell2013dynamics}
{\sc G.~R. Sell and Y.~You}, {\em {Dynamics of evolutionary equations}},
  vol.~143, Springer Science \& Business Media, 2013.

\bibitem{simon1986compact}
{\sc J.~Simon}, {\em {Compact sets in the space $L^p (0, T; B)$}}, Annali di
  Matematica pura ed applicata, 146 (1986), pp.~65--96.

\bibitem{sprengel2018investigation}
{\sc M.~Sprengel, G.~Ciaramella, and A.~Borz\`\i}, {\em {Investigation of
  optimal control problems governed by a time-dependent Kohn-Sham model}},
  Journal of Dynamical and Control Systems,  (2018), pp.~1--23.

\bibitem{troltzsch2010optimal}
{\sc F.~Tr{\"{o}}ltzsch}, {\em {Optimal Control of Partial Differential
  Equations: Theory, Methods, and Applications}}, Graduate studies in
  mathematics, American Mathematical Society, 2010.

\bibitem{unger2001}
{\sc A.~Unger and F.~Tr{\"{o}}ltzsch}, {\em {Fast Solution of Optimal Control
  Problems in the Selective Cooling of Steel}}, ZAMM - Journal of Applied
  Mathematics and Mechanics / Zeitschrift f{\"{u}}r Angewandte Mathematik und
  Mechanik, 81 (2001), pp.~447--456.

\bibitem{van2001review}
{\sc M.~{Van De Wal} and B.~{De Jager}}, {\em {A review of methods for
  input/output selection}}, Automatica, 37 (2001), pp.~487--510.

\bibitem{wouk1979course}
{\sc A.~Wouk}, {\em {A course of applied functional analysis}}, Wiley, 1979.

\bibitem{WuJacob2015}
{\sc X.~Wu, B.~Jacob, and H.~Elbern}, {\em Optimal control and observation
  locations for time-varying systems on a finite-time horizon}, SIAM Jour.
  Control and Optim., 54 (2015), pp.~291--316.

\bibitem{yousept2017optimal}
{\sc I.~Yousept}, {\em {Optimal control of non-smooth hyperbolic evolution
  Maxwell equations in type-II superconductivity}}, SIAM Journal on Control and
  Optimization, 55 (2017), pp.~2305--2332.

\bibitem{zettl2005}
{\sc A.~Zettl}, {\em {Sturm-Liouville Theory}}, Mathematical Surveys and
  Monographs, American Mathematical Society, 2005.

\bibitem{ZhangMorris}
{\sc M.~Zhang and K.~A. Morris}, {\em {Sensor choice for minimum error variance
  estimation}}, IEEE Trans. on Automatic Control,  (2017).

\end{thebibliography}
\end{document}